\newtheorem{dummy}{dummy}[section]
\newtheorem{lemma}[dummy]{Lemma}
\newtheorem{theorem}[dummy]{Theorem}
\newtheorem{conjecture}[dummy]{Conjecture}
\newtheorem{corollary}[dummy]{Corollary}
\newtheorem{proposition}[dummy]{Proposition}
\theoremstyle{definition}
\newtheorem{definition}[dummy]{Definition}
\newtheorem{example}[dummy]{Example}
\newtheorem{remark}[dummy]{Remark}
\newcommand{\bC}{\mathbb{C}}
\newcommand{\bN}{\mathbb{N}}
\newcommand{\bP}{\mathbb{P}}
\newcommand{\bR}{\mathbb{R}}
\newcommand{\bZ}{\mathbb{Z}}
\newcommand{\bT}{\mathbb{T}}
\newcommand{\cA}{\mathcal{A}}
\newcommand{\cB}{\mathcal{B}}
\newcommand{\cC}{\mathcal{C}}
\newcommand{\cE}{\mathcal{E}}
\newcommand{\cF}{\mathcal{F}}
\newcommand{\cG}{\mathcal{G}}
\newcommand{\cH}{\mathcal{H}}
\newcommand{\cL}{\mathcal{L}}
\newcommand{\cO}{\mathcal{O}}
\newcommand{\cP}{\mathcal{P}}
\newcommand{\cQ}{\mathcal{Q}}
\newcommand{\cS}{\mathcal{S}}
\newcommand{\cX}{\mathcal{X}}
\newcommand{\sfR}{\mathbb{C}}
\newcommand{\dghom}{\mathit{hom}}
\newcommand{\bSi}{\mathbf{\Sigma}}
\newcommand{\Sh}{\mathit{Sh}}
\newcommand{\Hom}{\mathrm{Hom}}
\newcommand{\Ext}{\mathrm{Ext}}
\newcommand{\Perf}{\mathcal{P}\mathrm{erf}}
\newcommand{\ltr}{\langle \Theta \rangle}
\newcommand{\ltrp}{\langle\Theta'\rangle}
\newcommand{\ori}{\mathfrak{or}}
\newcommand{\End}{\mathrm{End}}
\renewcommand{\mod}{\mathrm{mod}}
\newcommand{\forg}{\mathcal{F}org}
\newcommand{\lttr}{\langle \Theta_{T} \rangle}
\newcommand{\lttrp}{\langle \Theta_{T}' \rangle}
\begin{document}

\title[The nonequivariant coherent-constructible correspondence and tilting]{The nonequivariant coherent-constructible correspondence and tilting}

\begin{abstract}
The coherent-constructible correspondence is a relationship between coherent sheaves on a toric variety $X$ and constructible sheaves on a real torus $\bT$. This was discovered by Bondal, and explored in the equivariant setting by Fang, Liu, Treumann and Zaslow. 
In this paper we prove the equivariant coherent-constructible correspondence for a class of toric varieties including weighted projective space. Also, we give applications to the construction of tilting complexes in the derived category of toric DM stacks.
\end{abstract}

\author{Sarah Scherotzke}
\address{Sarah Scherotzke, 
Mathematical Institute of the University of Bonn, 
Endenicher Allee 60, 
53115 Bonn,
Germany}
\email{sarah@math.uni-bonn.de}

\author{Nicol\`o Sibilla}
\address{Nicol\`o Sibilla, Max Planck Institute for Mathematics, Vivatsgasse 7,
53111 Bonn,
Germany}
\email{sibilla@mpim-bonn.mpg.de}

\maketitle

{\small \tableofcontents}

\section{Introduction}

The coherent-constructible correspondence (\emph{CC correspondence}) is an equivalence of categories that relates coherent sheaves on a toric variety $X$, and constructible sheaves on a real torus $\bT$ which is dual to the compact torus acting on $X$. The CC correspondence was first conjectured by Bondal in the very influential preprint \cite{Bo}, and was established in the equivariant  setting by Fang, Liu, Treumann, and Zaslow in \cite{FLTZ1, FLTZ2}. In this paper we prove the non-equivariant CC correspondence for a class of toric varieties: our main application is the 
construction of tilting complexes for an interesting class of toric DM stacks. Below we 
give a more detailed sketch of our results.

\subsection{The non-equivariant coherent constructible correspondence}
Let $\mathbf{\Sigma} = (N, \Sigma, \beta)$ be a stacky fan (see Section \ref{sec:toric} for a definition), and let $M = Hom(N, \bZ)$ and $M_{\bR} = M \otimes_\bZ \bR$. Denote $\mathcal{X}_{\mathbf{\Sigma}}$ the corresponding toric orbifold. In \cite{FLTZ1, FLTZ2} the authors define a conical Lagrangian $\tilde{\Lambda}_{\bSi} \subset T^*M_{\bR}$, and construct an equivalence between the torus equivariant category of perfect complexes $\Perf_{T}(\cX_{\bSi})$, and the category of compactly supported constructible sheaves on $M_{\bR}$ with singular support in $\tilde{\Lambda}_{\bSi}$, $\Sh_c(M_{\bR}, \tilde{\Lambda}_{\bSi})$. The notion of singular support is due to Kashiwara and Schapira \cite{KS}, informally it is an invariant that measures the way sections of constructible sheaves propagate. We recall its definition and some of its basic properties in Section \ref{sec:constructible}.

The authors in \cite{FLTZ1, FLTZ2} conjecture that a similar statement holds when replacing the equivariant category with ordinary perfect complexes, $\Perf(\cX_{\bSi})$. For toric varieties partial results in this direction were obtained by Treumann in the preprint \cite{T}. In Section \ref{sec:main} we extend Treumann's work to toric orbifolds. 
More precisely, let $\bT$ be the quotient 
of $M_\bR$ by the integral lattice $M$, $\bT = M_{\bR}/M$. As $\tilde{\Lambda}$ is invariant under translation by $M$ we obtain a conical Lagrangian subset $\Lambda = \tilde{\Lambda}/M$ of $T^*\bT$. 

\begin{theorem} 
\label{thrm:main1}
There is a fully faithful functor $\kappa: Perf(\cX_{\bSi}) \rightarrow \Sh_c(\bT, \Lambda_{\bSi})$ which makes the diagram commute (up to natural equivalence)
$$
\xymatrix{
\Perf_{T}(\cX_{\bSi}) \ar[d]_{\mathcal{F}org} \ar[r]^{\tilde{\kappa}}_{\cong} & \Sh_c(M_{\bR}, \tilde{\Lambda}_{\bSi}) \ar[d]^{p_!} \\
\Perf(\cX_{\bSi}) \ar[r]^{\kappa} &  \Sh_c(\bT, \Lambda_{\bSi}),
}
$$
where $p:M_{\bR} \rightarrow M_{\bR}/M_\bZ$ is the quotient map, and $\mathcal{F}org$ is the forgetful functor.
\end{theorem}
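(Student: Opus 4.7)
The plan is to follow Treumann's strategy from \cite{T}, which establishes the analogous result for ordinary toric varieties, and adapt it to the stacky setting. The definition of $\kappa$ is essentially forced by commutativity of the square: since equivariant line bundles on $\cX_\bSi$ generate $\Perf(\cX_\bSi)$, every object $E$ admits an equivariant lift $\tilde E \in \Perf_T(\cX_\bSi)$, and I set $\kappa(E) := p_!\,\tilde\kappa(\tilde E)$. Well-definedness on objects holds because two equivariant lifts of $E$ differ by twisting by characters $\chi^m$ for $m \in M$; under $\tilde\kappa$ this corresponds to translation $t_m^*$ on $M_\bR$; and $p_!\circ t_m^* = p_!$ since $p\circ t_m = p$. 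Extending $\kappa$ to morphisms requires matching non-equivariant morphisms with the $M$-weight components of $\Hom_{\Perf_T(\cX_\bSi)}$, which is the same calculation that drives fully-faithfulness.

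For fully-faithfulness: since $p : M_\bR \to \bT$ is an \'etale covering with deck group $M$, one has $p^*p_!\cF \cong \bigoplus_{m \in M} t_m^*\cF$ for compactly supported $\cF$. Combined with the $(p_!, p^*)$-adjunction (using $p^! = p^*$ for \'etale $p$), this yields
$$\Hom_\bT(\kappa E, \kappa F) \;\cong\; \bigoplus_{m \in M} \Hom_{M_\bR}(\tilde\kappa \tilde E,\, t_m^* \tilde\kappa \tilde F).$$
Because $\tilde\kappa$ intertwines $-\otimes \chi^m$ with $t_m^*$, the right-hand side coincides, via the equivariant CCC, with $\bigoplus_m \Hom_{\Perf_T(\cX_\bSi)}(\tilde E,\, \tilde F \otimes \chi^m)$. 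On the coherent side, the $T$-weight decomposition of the Hom-space between forgetful images reads $\Hom_\cX(E, F) \cong \bigoplus_{m \in M}\Hom_T(\tilde E, \tilde F \otimes \chi^m)$. Term-by-term matching of these decompositions establishes the claim.

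The principal obstacle is controlling the direct-sum decompositions and the stacky compatibilities. On the constructible side, the projection-formula identity $p^*p_! \cong \bigoplus_m t_m^*$ and the convergence of the resulting sum both rely on the compact support condition together with the conic structure of $\tilde\Lambda_\bSi$, which should ensure that only finitely many translates of $\tilde\kappa \tilde F$ contribute to a given Hom. On the coherent side, finiteness of the $M$-weight decomposition relies on perfectness of $\tilde E$ and $\tilde F$. Beyond these finiteness points, the new ingredient relative to \cite{T} is verifying that the FLTZ equivariant equivalence $\tilde\kappa$ of \cite{FLTZ1, FLTZ2} genuinely intertwines $M$-twisting on the coherent side with $M$-translation on the constructible side in the stacky case, so that the argument descends to $\Perf(\cX_\bSi)$. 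Once these compatibilities are in hand, Treumann's descent should apply essentially verbatim.
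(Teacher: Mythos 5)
Your plan is structurally the same as the paper's, but the construction of $\kappa$ as stated has a gap. You write ``since equivariant line bundles on $\cX_\bSi$ generate $\Perf(\cX_\bSi)$, every object $E$ admits an equivariant lift $\tilde E$,'' and then set $\kappa(E):=p_!\,\tilde\kappa(\tilde E)$. This inference is false: the fact that $\langle E_T\rangle=\Perf(\cX_\bSi)$ (Corollary~\ref{cor:gen}) means the equivariantizable objects \emph{generate}, not that every perfect complex is equivariantizable. Cones of non-equivariant morphisms between equivariantizable objects need not be equivariantizable, and a skyscraper at a generic point of $T$ is a concrete non-equivariantizable example. So the formula $\kappa(E)=p_!\tilde\kappa(\tilde E)$ is undefined for a general $E\in\Perf(\cX_\bSi)$, and you cannot build $\kappa$ by evaluating it object by object.

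The paper fixes exactly this issue. It does not define $\kappa$ directly on $\Perf(\cX_\bSi)$; it defines $\kappa$ on the full triangulated dg subcategory $\ltrp\subset\cQ Coh(\cX_\bSi)$ generated by the objects $\Theta'(\sigma,\chi)=\forg\,\Theta_T'(\sigma,\chi)$ (pushforwards of equivariant line bundles from open substacks), sending $\Theta'(\sigma,\chi)\mapsto\Theta(\sigma,\chi)=p_!\Theta_T(\sigma,\chi)$. Full faithfulness is then checked on these generators via Proposition~\ref{prop:fully faithful}, whose adjunction computation $\dghom(p_!\Theta_T,\,\Theta)\cong\dghom(\Theta_T,\,p^!\Theta)$ together with $p^!\Theta(\tau,\chi_2)\cong\bigoplus_{\xi\in M}\Theta_T(\tau,\chi_2+\xi)$ is exactly your $p^*p_!\cong\bigoplus_m t_m^*$ identity. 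Since a dg functor on a generating set (with matching Hom complexes) extends uniquely to the triangulated hull, $\kappa$ then exists on $\ltrp$, and finally the paper proves $\Perf(\cX_\bSi)\subset\ltrp$ (using Corollary~\ref{cor:gen}) and restricts. You should reorganize along these lines: define $\kappa$ on a chosen generating set of honest objects (either the $\Theta'$'s or your equivariant line bundles), verify the Hom-complex matching there, and only then invoke generation to obtain the functor on all of $\Perf(\cX_\bSi)$. Two smaller points also needing attention: you never check that $\kappa$ actually lands in $\Sh_c(\bT,\Lambda_\bSi)$ (i.e.\ singular support is controlled by $p_!$), which the paper handles at the end of the proof using Theorem~7.1 of FLTZ; and the finiteness of the $\bigoplus_m$ decomposition comes from the compact support of $\tilde\kappa(\tilde\cL)$, which is worth stating explicitly rather than attributing to the conic structure of $\tilde\Lambda_\bSi$.
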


If $\cX_\bSi$ is a toric orbifold, we say that the non-equivariant CC correspondence holds for $\cX_\bSi$ if   
$\kappa$ is an equivalence.
Following \cite{FLTZ1, FLTZ2} we make the following Conjecture.
\begin{conjecture}
\label{conj:coh-co}
The non-equivariant CC correspondence holds for all toric orbifolds.
\end{conjecture}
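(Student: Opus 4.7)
The plan is to establish essential surjectivity of $\kappa$, since Theorem~\ref{thrm:main1} already provides full faithfulness. The conjecture then reduces to showing that every $F \in \Sh_c(\bT, \Lambda_\bSi)$ lies in the essential image of $\kappa$, and I would attack this via a generation argument, lifted through the $M$-cover $p \colon M_\bR \to \bT$.

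First I would identify an explicit set of generators of $\Sh_c(\bT, \Lambda_\bSi)$ that visibly lie in the image of $\kappa$. The natural candidates are the objects $p_!\tilde\kappa(L)$ where $L$ ranges over (fractional) $T$-equivariant line bundles on $\cX_\bSi$: by the commutative square of Theorem~\ref{thrm:main1} these coincide with $\kappa(\forg(L))$, hence tautologically lie in the image. The conjecture thus reduces to the purely constructible statement that the collection $\{p_!\tilde\kappa(L)\}_L$ split-generates $\Sh_c(\bT, \Lambda_\bSi)$.

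I would then prove this generation claim by devissage along the stratification of $\bT$ induced by $\Lambda_\bSi$, following Treumann's strategy for smooth toric varieties in \cite{T}. On each open stratum the basic costandard sheaves attached to cones of $\Sigma$ give the local model, and excision triangles let one climb through closed strata. The new feature in the orbifold setting is that the stacky data $\beta$ decorates each stratum with finite monodromy encoding the isotropy of $\cX_\bSi$; these monodromy representations must be matched with the fractional characters labelling equivariant line bundles on the coherent side, a bookkeeping step that should be formal once the underlying descent problem is solved.

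The main obstacle, I expect, is controlling $p_!$ along the infinite $M$-cover. Any $F \in \Sh_c(\bT, \Lambda_\bSi)$ pulls back to an $M$-equivariant $p^*F$ with singular support in $\tilde\Lambda_\bSi$ that is almost never compactly supported, so one cannot simply lift $F$ through the equivariant equivalence $\tilde\kappa$. Producing a compactly supported equivariant model for $F$ requires a categorified descent argument --- essentially a \v{C}ech-type reconstruction that trades non-compactness for an explicit resolution by $p_!$ of fundamental-domain sheaves --- and in the stacky case this must furthermore absorb the torsion in $\mathrm{coker}(\beta)$. This is presumably the delicate point that blocks a uniform proof, and it is why the main theorem of the paper restricts to a class of stacky fans (such as weighted projective space) where the descent step can be carried out by hand; extending it in general likely needs a sharper microlocal analysis of $\Lambda_\bSi$ together with a spectral sequence comparing $\tilde\kappa$ and $\kappa$ compatibly with $p_!$ and $\forg$.
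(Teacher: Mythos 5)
The statement you were asked to address is Conjecture~\ref{conj:coh-co}, and the paper itself does \emph{not} prove it: it is explicitly stated as an open conjecture, and the paper's actual contribution (Theorems~\ref{thrm:main1} and~\ref{thrm:main2}) establishes it only for the restricted class of cragged stacky fans. Your proposal correctly identifies that the mathematical content of the conjecture is essential surjectivity of $\kappa$, correctly reduces this to the purely topological statement that the objects $p_!\tilde\kappa(\tilde{\cL})$ generate $\Sh_c(\bT,\Lambda_\bSi)$, and --- to your credit --- honestly names the obstruction (controlling $p_!$ along the infinite $M$-cover, since $p^*\cF$ is almost never compactly supported) rather than pretending to dispose of it. So your submission is not a proof, and you say as much; on the key points of framing you are consistent with the paper.

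It is still worth contrasting your proposed strategy with what the paper actually does in the cragged case, because the two are genuinely different. You propose a devissage along the stratification of $\bT$ induced by $\Lambda_\bSi$: build $\cF$ up via excision triangles from costandard sheaves on strata, and match the monodromy coming from the stacky structure with fractional characters on the coherent side. The paper's argument (proof of Theorem~\ref{thrm:main2}) avoids all stratum-by-stratum analysis. It observes that $\kappa(k(1))\cong\bC_\bT$ (Lemma~\ref{lem:skyscraper}), takes a Koszul-type resolution $\cP^\bullet\to k(1)$ of the skyscraper by direct sums of line bundles, and then, for an arbitrary $\cF\in\Sh_c(\bT,\Lambda_\bSi)$, tensors $\kappa(\cP^\bullet)$ with $\cF$ so that $\cF\cong\bC_\bT\otimes\cF$ becomes a left convolution of the objects $\kappa(\cP^i)\otimes\cF$. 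The cragged hypothesis is exactly what makes this legal: closure of $\Sh_c(\bT,\Lambda_\bSi)$ under tensor product (Corollary~\ref{cor:tensor}) keeps these tensor products in the category, and the projection formula shows each $\kappa(\cP^i)\otimes\cF\cong\bigoplus_j p_!(\tilde\kappa(\tilde{\cL}_j)\otimes p^!\cF)$ lies in the image of $p_!$. This ``categorified partition of unity'' completely replaces the $M$-cover descent you were worried about. If you want to pursue your stratification devissage further, the question to confront is whether it can succeed without some analogue of the tensor-closure condition; in the paper's approach that condition is precisely what is bought by craggedness, and it is plausibly the real content of any proof in this style.
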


One of or main results is a proof of Conjecture 
\ref{conj:coh-co} for a class of stacky fans $\bSi$ 
called \emph{cragged} (see Definition \ref{def:cragged fans}), which satisfy some special combinatorial conditions. 
Bondal and Ruan announced that the 
non-equivariant CC correspondence holds for weighted projective space \cite{BR}: 
as weighted projective spaces have cragged fans, this follows from our work.

\begin{theorem}
\label{thrm:main1}
If $\bSi$ is a cragged stacky fan then the functor $\kappa: \Perf(\cX_{\bSi}) \rightarrow \Sh_c(\bT, \Lambda_{\bSi})$ is an equivalence.
\end{theorem}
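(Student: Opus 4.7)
The plan is to exploit the commutative square in order to reduce essential surjectivity of $\kappa$ to a generation statement on the constructible side, and then to verify that statement using the craggedness of $\bSi$. Since $\kappa$ is already known to be fully faithful, its essential image is a full thick triangulated subcategory of $\Sh_c(\bT,\Lambda_\bSi)$; establishing that $\kappa$ is an equivalence therefore amounts to showing this essential image generates the target. From the commutative diagram one has $\kappa\circ\mathcal{F}org \simeq p_!\circ\tilde{\kappa}$, and since $\tilde{\kappa}$ is an equivalence by \cite{FLTZ1, FLTZ2}, the essential image of $\kappa$ contains that of $p_!$. It therefore suffices to prove that $p_!: \Sh_c(M_\bR,\tilde{\Lambda}_\bSi) \to \Sh_c(\bT,\Lambda_\bSi)$ generates the target.

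To exhibit generators, I would work with the family of standard constructible sheaves on $M_\bR$ associated to (the interiors of) cones $C$ of $\Si$ together with their $M$-translates $C+m$; under $\tilde{\kappa}$ these correspond to the equivariant line bundles $\cO(m)$ on $\cX_\bSi$. Pushing them forward along $p$, one obtains a family $\{p_!(j_!\underline{\bC}_{C+m})\}$ in the essential image of $p_!$, indexed by cones of $\Si$ and a suitable set of characters in $M$. The goal becomes to show that every $\Lambda_\bSi$-constructible sheaf on $\bT$ lies in the thick triangulated subcategory generated by this family.

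The heart of the argument is where the cragged hypothesis enters. Craggedness, introduced in Definition \ref{def:cragged fans}, is a combinatorial restriction on the fan that should guarantee that the stratification of $\bT$ induced from $\Si$ behaves well at the level of monodromy, so that each local standard sheaf on a stratum admits a compactly supported lift to $M_\bR$ whose $p_!$-pushforward reconstructs it. I would argue by induction on a stratification of $\bT$, ordered so that craggedness provides, at each step, a compactly supported sheaf on $M_\bR$ with the correct pushforward, and then glue by standard recollement along closed-open decompositions.

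The main obstacle is precisely this inductive step: for a general, non-cragged fan, the quotient stratification on $\bT$ may carry nontrivial monodromy along the cycles created by quotienting out $M$, and the required standards on strata can then fail to lift. The combinatorial content of craggedness should be exactly what is needed to rule out such obstructions, and verifying this concretely will require a careful analysis of how the cones of $\Si$ and their $M$-translates fit together in $M_\bR$, and in particular of those cones which wrap nontrivially upon descent to $\bT$.
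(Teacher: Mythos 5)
Your opening reduction is exactly the paper's (Remark~\ref{rem:equivalence}): since $\kappa$ is fully faithful and the square commutes, essential surjectivity reduces to showing that the image of $p_!$ generates $\Sh_c(\bT,\Lambda_\bSi)$. After that, however, you and the paper diverge completely, and the route you sketch has a genuine gap that you yourself flag as ``the main obstacle.''

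The paper never attempts to lift standard sheaves stratum-by-stratum. Instead it uses a ``categorified partition of unity.'' By Lemma~\ref{lem:skyscraper}, $\kappa(k(1)) \cong \bC_\bT$, and Proposition~\ref{prop:resolution} resolves the skyscraper $k(1)$ by direct sums of line bundles; pushing this Postnikov system through $\kappa$ expresses $\bC_\bT$ as a left convolution of objects $\kappa(\cP^i)$, each a direct sum of $\kappa(\cL_j) = p_!(\tilde\kappa(\tilde\cL_j))$. Now, for any $\cF \in \Sh_c(\bT,\Lambda_\bSi)$, one tensors this complex with $\cF$ to exhibit $\cF = \bC_\bT \otimes \cF$ as a left convolution of the objects $\kappa(\cP^i)\otimes\cF$. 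Craggedness enters at precisely two points, neither of which appears in your sketch: (i) by Proposition~\ref{prop:additive} and Corollary~\ref{cor:tensor}, $\Sh_c(\bT,\Lambda_\bSi)$ is closed under tensor product, so each $\kappa(\cP^i)\otimes\cF$ stays in the target category; (ii) by the projection formula, $p_!(\tilde\kappa(\tilde\cL_j))\otimes\cF \cong p_!(\tilde\kappa(\tilde\cL_j)\otimes p^!\cF)$, which puts each term in the image of $p_!$. The conclusion then follows since the triangulated hull of these terms contains $\cF$.

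The gap in your proposal is that ``craggedness should guarantee lifts of local standards'' is not substantiated and is not, as far as the paper shows, the right way to use the hypothesis. Craggedness is a condition that makes $\Lambda_\bSi$ closed under fiberwise addition, hence makes $\Sh_c(\bT,\Lambda_\bSi)$ a tensor subcategory; this is the mechanism the paper exploits. Your approach would require a separate and nontrivial microlocal analysis of how $M$-translates of cones wrap around $\bT$, and it is not clear that craggedness alone controls the monodromy in the way you hope. If you want to pursue a generation-by-standards argument, you would need to prove a concrete lifting lemma; the tensor-product argument avoids this entirely.
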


Although the conditions defining cragged fans are quite restrictive they are satisfied in a number of interesting cases: we have already mentioned weighted projective spaces, also all toric Fano surfaces have cragged fans. 
The proof of Theorem \ref{thrm:main1} depends on a careful study of the category of constructible sheaves $\Sh_c(\bT, \Lambda_\bSi)$. When $\bSi$ is cragged $\Sh_c(\bT, \Lambda_\bSi)$ has some important properties, 
including the fact that it is closed under tensor product of constructible sheaves. This is one of the key ingredients in the proof of Theorem \ref{thrm:main1}. 

These properties of $\Sh_c(\bT, \Lambda_\bSi)$ follow from general results 
about constructible sheaves.  
In Section 
\ref{sec:constructible} we define   \emph{cragged Lagrangians}: if 
$\bSi$ is a cragged fan then  
$\Lambda_\bSi$ is a cragged Lagrangian in this sense. 
Using techniques from Kashiwara and Schapira's  microlocal sheaf theory \cite{KS} we prove that if $M$ is a real analytic manifold and $\Lambda \subset T^*M$ is cragged then $\Sh_c(M, \Lambda)$ is always closed under tensor product, and the restriction of the tautological $t$-structure of constructible sheaves restrict to a $t$-structure on this category.

\subsection{Tilting}
\label{sec:intro2}
Section \ref{section:tilting} contains an application of the non-equivariant CC correspondence to the algebraic geometry of toric DM stacks. 
In \cite{Ki} King conjectured that the derived category of a  
toric variety should contain a tilting object 
$\cE$, such that $\cE$ decomposes as a direct sum of line bundles. A lot of work has been done towards establishing King's Conjecture and its extension to toric orbifolds in several classes of examples, see for instance \cite{CM} and \cite{BH} and references therein. Although King's Conjecture is now known to be false in general \cite{HP, Ef} weaker statements are likely to be true:

\begin{conjecture}[\cite{Ef} Conjecture 1.6]
\label{conj:exceptional}
Let $\cX$ be a smooth and proper 
toric DM stack. Then $\Perf(\cX)$ admits 
a full strong exceptional collection.\footnote{Dropping the condition that the collection be strong, this has been proved by Kawamata \cite{Kaw}. Note also that if $\cX$ is Fano  Conjecture  \ref{conj:exceptional} follows from Dubrovin's Conjecture, which is motivated by Kontsevich's Homological Mirror Symmetry: see \cite{Bay} and references therein for more information on work in this area.}
\end{conjecture}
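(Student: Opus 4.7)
The plan is to leverage the non-equivariant CC correspondence to translate the question into constructible sheaf theory on the torus $\bT$, where exceptional collections admit a direct geometric description in terms of standard sheaves on cells of a natural stratification. For cragged stacky fans this strategy can be carried out with the tools of this paper; for a general smooth proper toric DM stack, additional input beyond what is established here is needed, as I explain below.

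First, fix a candidate collection. On $\Perf(\cX_\bSi)$, one takes a list $L_1,\ldots,L_n$ of line bundles of the form $\cO(D)$ for a carefully chosen family of torus-invariant divisors $D$ determined by the combinatorics of $\bSi$; for weighted projective space $\bP(a_0,\ldots,a_d)$ one uses a Beilinson-type window of length $a_0+\cdots+a_d$. Via Theorem \ref{thrm:main1} applied to cragged $\bSi$, this transports to a collection $\kappa(L_i) \in \Sh_c(\bT,\Lambda_\bSi)$. The diagram in that theorem identifies $\kappa(L_i) = p_!\,\tilde{\kappa}(L_i)$, and $\tilde{\kappa}(L_i)$ is known explicitly from \cite{FLTZ1,FLTZ2} as the constant sheaf on a translated chamber in $M_\bR$; the pushforward $p_!$ yields a standard sheaf on an open region in $\bT$ with singular support in $\Lambda_\bSi$.

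Second, check the strong exceptionality $\Ext^k(\kappa(L_j),\kappa(L_i)) = 0$ for $k>0$, and for $k=0$ whenever $j>i$. Here one uses the two structural properties established in Section \ref{sec:constructible} for cragged Lagrangians: closure of $\Sh_c(\bT,\Lambda_\bSi)$ under internal tensor product, and restriction of the tautological constructible $t$-structure. Together these reduce each $\Ext$ to a hypercohomology $\mathbb{H}^k(\bT, \uhom(\kappa(L_j), \kappa(L_i)))$, which simplifies to the cohomology of a constant sheaf on an explicit open region of $\bT$ that vanishes in the required degree range by construction of the ordering. Third, prove fullness by a devissage along the stratification induced by $\Lambda_\bSi$: using the $t$-structure, reduce to the heart, and then express every irreducible standard sheaf as an iterated cone of the $\kappa(L_i)$, using the concreteness of the chamber decomposition provided by craggedness.

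The main obstacle is two-fold. Internal to the cragged setting, the hardest step is the simultaneous verification of semi-orthogonality in $\Ext^0$ and of generation: the two constraints are in tension, and a naive ordering of $L_1,\ldots,L_n$ fails either one or the other, so a combinatorial argument based on the shape of the cragged stratification is required to pick both the ordering and the specific divisors. External to cragged fans, and far more serious, is the fact that to address Conjecture \ref{conj:exceptional} in full generality one must go beyond the range where the non-equivariant CC correspondence is established: for non-cragged $\bSi$ the correspondence is only conjectural (Conjecture \ref{conj:coh-co}), and $\Sh_c(\bT,\Lambda_\bSi)$ is not known to be closed under tensor product, so the entire reduction to combinatorial sheaf theory breaks down. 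Any uniform attack on Conjecture \ref{conj:exceptional} via this route therefore hinges first on settling Conjecture \ref{conj:coh-co}, which appears to be the deeper problem.
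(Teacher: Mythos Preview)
The statement you are addressing is a \emph{conjecture} (attributed to Efimov), and the paper does not prove it. The paper's contribution in this direction is the strictly weaker Theorem~\ref{thrm:main22}: for cragged $\bSi$, $\Perf(\cX_\bSi)$ admits a tilting complex, i.e.\ is equivalent to $D^b_{dg}(B)$ for some finite-dimensional algebra $B$. The authors state explicitly that they are unable to show $B$ is directed, which is exactly what would be needed to upgrade tilting to a full strong exceptional collection. So there is no ``paper's own proof'' of Conjecture~\ref{conj:exceptional} to compare against; the honest comparison is between your sketch and the paper's proof of the weaker tilting theorem.

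On that comparison: your approach and the paper's are genuinely different. You propose to write down explicit line bundles $L_1,\dots,L_n$, transport them via $\kappa$, and verify exceptionality and generation by hand using the chamber combinatorics. The paper instead argues abstractly: it embeds $\Sh_c(\bT,\Lambda_\bSi)$ into $D^b_{dg}(A_\cS)$ for a directed quiver algebra $A_\cS$ coming from an acyclic refinement of the stratification, observes that the tautological $t$-structure restricts (craggedness), that a left adjoint exists (saturation of $\Perf(\cX_\bSi)$), and then invokes a general result (Theorem~\ref{theorem: derived}) that any such subcategory is itself $D^b_{dg}(B)$ for some $B$. This yields a tilting object $L^0(A_\cS)$ without ever naming it as a sum of line bundles, and without any control over whether $B$ is directed.

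Your sketch, while plausible as a program, is not a proof even in the cragged case. The phrases ``carefully chosen family of torus-invariant divisors,'' ``explicit open region of $\bT$ that vanishes in the required degree range by construction of the ordering,'' and ``combinatorial argument based on the shape of the cragged stratification'' are precisely the places where the content lives, and you have not supplied them. You yourself flag that semi-orthogonality in $\Ext^0$ and generation are in tension and that a naive ordering fails; resolving that tension is the entire problem, and it is open. Moreover, your claim that closure under tensor product and $t$-structure restriction ``reduce each $\Ext$ to a hypercohomology'' that ``simplifies'' is not justified: the internal $\cH om$ between two standard sheaves on overlapping open regions of $\bT$ is not in general a constant sheaf on an open set, and its hypercohomology is not obviously concentrated in degree zero. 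For the non-cragged case you correctly identify that the method does not even get started; but since the conjecture is open already for cragged fans, this is not the main obstacle.
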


In Section \ref{section:tilting} we prove the following result, that gives new evidence for Conjecture \ref{conj:exceptional}.
\begin{theorem}
\label{thrm:main22}
If $\bSi$ is a cragged stacky fan then $\Perf(\cX_\bSi)$ admits a tilting complex. That is,  
$\Perf(\cX_\bSi)$ is quasi-equivalent to the derived category of a finite dimensional algebra $A$.
\end{theorem}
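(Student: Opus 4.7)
The plan is to use the equivalence $\kappa : \Perf(\cX_\bSi) \xrightarrow{\cong} \Sh_c(\bT, \Lambda_\bSi)$ of the preceding theorem to transport the problem to the constructible side, where the cragged hypothesis gives us concrete tools: closure under tensor product, a $t$-structure, and a combinatorial stratification of $\bT$ by the images of the cones of $\Sigma$ under the quotient $p : M_\bR \to \bT$. A tilting object in $\Perf(\cX_\bSi)$ will be constructed as (the preimage under $\kappa$ of) a finite direct sum $T = \bigoplus_{i} F_i$ of explicitly chosen constructible sheaves with singular support in $\Lambda_\bSi$.

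For the candidate, I would start from the equivariant picture. On the upstairs side $\Sh_c(M_\bR, \tilde{\Lambda}_\bSi) \cong \Perf_T(\cX_\bSi)$, equivariant line bundles $\cO(D_\chi)$ correspond under $\tilde\kappa$ to standard constructible sheaves $\bC_{C_\chi}$ supported on explicit chambers $C_\chi \subset M_\bR$ indexed by characters $\chi \in M$. One then selects a finite set $S \subset M$ of characters so that the sheaves $F_\chi := p_!(\bC_{C_\chi}) \in \Sh_c(\bT, \Lambda_\bSi)$, for $\chi \in S$, cover a fundamental domain of the $M$-action in the appropriate sense; by the commuting square in the statement of Theorem \ref{thrm:main1} (the first one), each $F_\chi$ corresponds to $\kappa$ applied to a line bundle $\cL_\chi = \mathcal{F}org(\cO(D_\chi))$, and I set $T := \bigoplus_{\chi \in S} F_\chi$.

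Three properties must be verified. Finite dimensionality of $\End(T)$ reduces to finiteness of $\Hom(F_\chi, F_{\chi'})$ for each pair, which follows from compactness of $\bT$, constructibility of the $F_\chi$, and the fact that each stalk of $F_\chi$ is finite-dimensional. For generation, I would argue that the images under $\kappa$ of the $\cL_\chi$ already generate the equivariant category $\Perf_T(\cX_\bSi)$ (by standard toric arguments on exceptional collections of line bundles, using the cragged combinatorics), and then use the commuting square together with surjectivity-on-objects properties of $p_!$ restricted to $\Sh_c(M_\bR, \tilde{\Lambda}_\bSi)$ to deduce generation of $\Sh_c(\bT, \Lambda_\bSi)$ by $T$. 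The heart of the matter is the Ext-vanishing
\[
\Ext^k(T, T) = 0 \quad \text{for all } k \neq 0,
\]
which is where the cragged assumption enters essentially: using $\Hom(F_\chi, F_{\chi'}) \cong \Gamma(\bT, \uhom(F_\chi, F_{\chi'}))$, rewriting $\uhom$ via tensor product with a dual (legitimate because $\Sh_c(\bT, \Lambda_\bSi)$ is closed under tensor by the cragged hypothesis), and then applying the fact that the tautological $t$-structure restricts to $\Sh_c(\bT, \Lambda_\bSi)$, one reduces the vanishing of higher Ext's to a concrete cohomology computation on a union of open strata determined by the characters $\chi, \chi' \in S$.

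The main obstacle, as is typical for tilting results, is the Ext-vanishing step. The difficulty is to choose the set $S$ so that all the pairwise open strata arising in the reduction above are acyclic (e.g. contractible or of a controlled homotopy type modulo the stratification). This is where I expect the cragged combinatorics to do its decisive work: the special shape of $\Sigma$ should be used to produce $S$ whose associated chambers in $M_\bR$ project to open sets in $\bT$ with the required cohomological vanishing. Once $S$ is constructed, the remaining verifications are essentially formal, and the existence of $T$ as a tilting object, combined with Theorem \ref{thrm:main1}, yields the quasi-equivalence $\Perf(\cX_\bSi) \simeq D(A\text{-mod})$ for $A = \End(T)$.
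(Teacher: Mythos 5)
Your proposal takes a genuinely different route from the paper's, and it contains a substantial gap precisely at the step you flag as "the heart of the matter." The paper does \emph{not} attempt to exhibit a tilting object as a direct sum of (images of) line bundles. Instead, it proceeds abstractly: it embeds $\Sh_c(\bT, \Lambda_\bSi)$ into $\Sh_c(\bT, \cS) \cong D^b_{dg}(A_\cS)$ for an acyclic stratification $\cS$ (Theorem \ref{theorem: quiver}), shows using the cragged hypothesis only that the tautological $t$-structure restricts to $\Sh_c(\bT, \Lambda_\bSi)$ (Proposition \ref{prop:cragged}), observes that the embedding has a left adjoint because $\Perf(\cX_\bSi)$ is saturated, and then invokes Theorem \ref{theorem: derived}: a $t$-exact, reflective, quasi fully-faithful subcategory of $D^b_{dg}(A)$ is itself equivalent to some $D^b_{dg}(B)$, with $B = \End(L^0(A))$ where $L^0$ is the induced left adjoint on the heart. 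The tilting complex obtained is $L^0(A_\cS)$, and the paper has no a priori description of it as a sum of line bundles. (Note also that the closure under tensor product, which you invoke, plays no role in the paper's tilting argument; it is used for essential surjectivity of $\kappa$ in Theorem \ref{thrm:main2}.)

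The gap in your proposal is the claimed $\Ext$-vanishing $\Ext^k(T,T)=0$ for $k\neq 0$ where $T=\bigoplus_\chi F_\chi$ and each $F_\chi=\kappa(\cL_\chi)$. You acknowledge this is "the main obstacle" and gesture at the cragged combinatorics doing "decisive work," but you do not produce the set $S$, prove the acyclicity of the relevant strata, or even indicate why the cragged condition guarantees it. This is not a formal verification: if it could be completed, your argument would establish King's Conjecture (existence of a tilting bundle splitting as a sum of line bundles) for all cragged stacky fans, a strictly stronger statement than Theorem \ref{thrm:main22}. The authors explicitly state in Section \ref{sec:intro2} that they "are currently unable to show that the algebra $A$ that is produced by our argument is directed," i.e.\ they cannot prove even the weaker consequence that $\Perf(\cX_\bSi)$ has a full strong exceptional collection. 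There is also a technical snag in your $\uhom$-via-tensor-with-dual reduction: $\Sh_c(\bT,\Lambda_\bSi)$ is not obviously closed under Verdier duality, since duality sends singular support to its fiberwise antipode, and $\Lambda_\bSi = \bigcup \tau^\perp_\chi \times (-\tau)$ is not antipodally symmetric. So the reduction you sketch for the $\Ext$ computation is itself not licensed by the cragged hypotheses as stated.
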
 
The key point in the 
proof of Theorem \ref{thrm:main22} is the fact that as $\Lambda_\bSi$ is cragged, $\Sh_c(\bT, \Lambda_\bSi)$ comes together with a very natural $t$-structure and we have a good control of its heart. Theorem \ref{thrm:main22} does not give a complete proof of Conjecture \ref{conj:exceptional} for toric orbifolds with cragged fans:  we are currently unable to show that the algebra $A$ that is produced by our argument is directed.\footnote{$A$ is directed if and only if the derived category of $A$-modules admits a full strong exceptional collection.} However we believe that this is the case, we plan to investigate this issue in future work. 

{\bf Acknowledgements:} We thank David Treumann and Eric Zaslow for their interest in this project. We thank particularly David Treumann for many useful discussions and for letting us include in our paper some results from his 
preprint \cite{T}. N.S. thanks the Max Planck Institute for Mathematics for excellent working conditions.

\section{Conventions and background}
Throughout the paper we work over the field of complex numbers $\bC$. 
\subsection{Categories}
By \emph{dg category} we shall always mean a 
$\bC$-linear differential graded category, see \cite{Ke} and \cite{Dr} for 
definitions and basic properties. 
If $C$ is a dg category and $x, y$ are objects in $C$, we denote $hom_C(x,y)$ the complex of morphisms between $x$ and $y$.  We denote $D(C)$ the \emph{homotopy category} of the dg category $C$: 
$D(C)$ has the same objects then $C$, while the set of morphisms 
$Hom_{D(C)}(x, y)$ is given by the zero-th cohomology of the complex $hom_C(x, y)$. 
It will be often useful to work with 
\emph{triangulated} dg categories, which are defined for instance in Section 2.4 of \cite{Dr}.\footnote{Drinfeld calls such categories \emph{pre-triangulated}.} If $E$ is the set of objects in a triangulated dg category $C$ we denote $\langle E \rangle \subset C$ the smallest  triangulated dg subcategory of $C$ containing $E$. The homotopy category $D(C)$ of a triangulated dg category $C$ is triangulated in the classical sense. We say that a triangulated dg category $C$ has a t-structure if its homotopy category $D(C)$ has one. The reader can consult \cite{L} Section 1.2.1 for basic facts on t-structures in 
the context of stable $(\infty,1)$-categories: all the statements there can be adapted to  triangulated dg categories in a straightforward way. We sometimes consider diagrams of functors between 
dg categories: we say that such a diagram is \emph{commutative} if it is commutative up to canonical natural equivalence. 

For future reference we recall the notion of \emph{Postnikov systems} in 
triangulated dg categories, see Chapter $4$ of \cite{GM}. 
Postnikov systems give a convenient way of talking about resolutions in triangulated categories. 
\begin{definition}
Let $C$ be a triangulated dg category, and let 
$$
X^\bullet = X^c \stackrel{d^ c}{\rightarrow} X^{c+1} \stackrel{d^ {c+1}}{\rightarrow} \dots \stackrel{d^ 0}{\rightarrow} X^0
$$ 
be a sequence of objects in $C$ with degree zero maps between them, such that $d^{i+1}d^i$ = $0$ in the homotopy category.

\begin{itemize}
\item A \emph{(left) Postnikov system} attached to $X^\bullet$ is a diagram in $C$ of the form
$$
\xymatrix{ 
X^c \ar[r]^{d^c} \ar[rd]|{Id} & X^{c+1} \ar[r]^{d^{c+1}}  \ar[rd]|{g^{c+1}}& X^{c+2} \ar[r]^{d^{c+2}}  \ar[rd]|{g^{c+2}}&\cdots \ar[r]^{d^{-1}}  & X^0 \ar[rd]|{g0} \\ 
& Y^c = X^c  \ar[u]|{f^c = d^c} 
& Y^{c+1}  \ar[u]|{f^{c+1}} 
\ar[l]^{+1} & Y^{c+2}  \ar[u]|{f^{c+2}} 
\ar[l]^{+1} &\cdots  \ar[u]|{f^{-1}} 
\ar[l]^{+1} & Y^0,   \ar[l]^{+1} 
}
$$
which satisfies the following conditions:
\begin{enumerate}
\item  $f^ i$ and $g^ i$ have degree zero, and $f^ i g^ i$ = $d^ i$ in the homotopy category. 
\item $Y^ i \stackrel{f^ i}{\rightarrow} X^ {i+1} \stackrel{g^ {i+1}}{\rightarrow} Y ^{i+1}$ is a cofiber sequence.
\end{enumerate}
\item We call an object $Y$ of $C$ a \emph{left convolution} of $X^\bullet$ if there is a  Postnikov system attached to $X^\bullet$ with the property that $Y = Y^0$.
\end{itemize}
\end{definition}

\begin{remark}
Postnikov systems are preserved by dg functors of triangulated dg categories. 
Note also that if $Y \in C$ is a left convolution of $X^\bullet$ then $Y$ belongs to the subcategory $\langle X^c, \dots, X^0 \rangle \subset C$.
\end{remark}

\subsection{Constructible sheaves}
\label{sec:back-cons}
The classical reference for microlocal sheaf theory is \cite{KS}. 
Although the theory of \cite{KS} is cast in the language of ordinary 
triangulated categories, it is straightforward 
to adapt it to the setting of dg categories.  
This issue is discussed at greater length in 
\cite{NZ, N} to which we refer the reader for further details. If $M$ is a topological space   
we denote $\Sh(M)$ the 
dg category of bounded complexes of sheaves of 
$\bC$-vector spaces on $M$ localized at quasi-isomorphisms: $\Sh(M)$ is a dg enhancement 
of the usual derived category of sheaves over $M$, which is equivalent to the homotopy category $D(\Sh(M))$. For the theory of localization in the dg setting we refer the reader to \cite{Dr}. 

Assume that $M$ is a real analytic manifold. 
A sheaf $\cF$ of  $\bC$-vector spaces is called \emph{quasi-constructible} if
there is a Whitney stratification of $M$ such that the restriction of $\cF$ to each stratum is locally constant. $\cF$ is called \emph{constructible} if additionally its stalks $\cF_x$ are finite dimensional vector spaces for all $x \in M$. We denote $\Sh_c(M)$ (resp. $\Sh_{qc}(M)$) 
the triangulated dg category of complexes with constructible (resp. quasi-constructible) cohomology. We often refer to objects in $\Sh_c(M)$ simply as constructible sheaves. If $\cS$ is a Whitney stratification, we denote $\Sh_c(M, \cS) \subset \Sh_c(M)$ the triangulated dg category of complexes that have cohomology sheaves which are constructible with respect to $\cS$. 
An important invariant of an object $\cF \in \Sh_c(M)$ is its  
singular support, see \cite{KS} chapter V. We give a 
detailed review of this concept in Section \ref{sec:constructible}.   
If $\Lambda \subset T^*M$ is a 
$\bR_{>0}$-invariant Lagrangian, the triangulated dg category 
of constructible sheaves whose singular support is contained in
$\Lambda$ is denoted $\Sh_c(M, \Lambda) \subset \Sh_c(M)$. 

The theory of Grothendieck's (derived) six operations $f_*, f^*, f_!, f^!, \otimes, \cH om $ can be lifted to the dg category of constructible sheaves 
$\Sh_c(M)$, see \cite{N} Section 2.2. 
We denote $\Gamma(-)$ the functor of global sections. If  
$\cF \in \Sh_c(M)$ and $Z \subset M$ is a closed subset we denote   
$\Gamma_Z(\cF)$ the derived functor of the subsheaf of sections supported on $Z$. 
Throughout the paper, it will always be understood that all the functors between triangulated dg categories are dg derived: thus, for instance, we denote the derived push-forward simply $f_*$,  and not $Rf_*$, and similarly for $\Gamma(-)$ and $\Gamma_Z(-)$.

\subsection{Deligne-Mumford stacks}
An introduction to Deligne-Mumford (DM) stacks can be found 
in the appendix of \cite{V}. We refer to \cite{V} also for a definition of (quasi)-coherent sheaves and vector bundles on DM stacks. If $\cX$ is a DM stack we denote $\cQ Coh(\cX)$ the triangulated dg  
category of bounded complexes of quasi-coherent sheaves on $\cX$: $\cQ Coh(\cX)$ is obtained from the dg category of bounded complexes of quasi-coherent sheaves on $\cX$ by localizing at quasi-isomorphisms. A perfect complex is an object 
of $\cQ Coh(\cX)$ which is locally quasi-isomorphic to a complex of vector bundles 
on $\cX$. The triangulated dg category of perfect complexes is denoted $\Perf(\cX) \subset \cQ Coh(\cX)$. Let $G$ be an algebraic group acting on $\cX$ 
in the sense of \cite{Ro}. The $G$-equivariant categories of quasi-coherent sheaves and perfect complexes on $\cX$ will be denoted respectively 
$\cQ Coh_G(\cX)$ and $\Perf_G(\cX)$. 

\section{Toric orbifolds and stacky fans}
\label{sec:toric}
In this section we review the definition of \emph{toric DM stacks} and \emph{stacky fans} due to Borisov, Chen and Smith \cite{BCS}, see also \cite{FMN} 
and \cite{Iw}. We also recall a few facts in 
toric geometry which will be important later. 
We follow quite closely \cite{BCS} except we work in 
somewhat lesser generality. Indeed, in order to simplify the exposition we restrict to \emph{toric orbifolds}, i.e. toric DM stacks with trivial generic stabilizers. 

\begin{definition}
\label{def:stackyfan}
A \emph{(strict) stacky fan } $\bSi$ is given by a tuple $(N, \Sigma, \beta)$, where:
\begin{itemize}
\item $N \cong \bZ^n$ is a finitely generated 
free abelian group.
\item $\Sigma$ is a finite rational simplicial fan in $N_\bR = N \otimes_\bZ \bR$, such that its set of one dimensional cones $\Sigma(1) = \{\rho_1, \dots, \rho_r\}$ spans $N_\bR$. 
\item $\beta: \bZ^r = \bigoplus_{i=1}^{i=r}\bZ \cdot e_i \rightarrow N$ is a homomorphism such that for all $i \in \{1 \dots r \}$ $b_i := \beta(e_i)$ belongs to $\rho_i$.
\end{itemize}
\end{definition}

To any stacky fan we can attach a toric orbifold $\cX_\bSi$ in the following way. Let $M = Hom(N, \bZ)$ and let    
$\beta^*: M \rightarrow M':=Hom(\bZ^r, \bZ)$ be 
the dual of $\beta$. 
It follows from our assumptions that $\beta^*$ 
is injective. Denote $Gale(N)$ the cockernel of 
$\beta^*$, and set 
$$
G_\bSi := Hom(Gale(N), \bC^*).
$$  
The morphism $\beta^*$ induces an embedding  
$G_\bSi \hookrightarrow Hom(M', \bC^*) \cong (\bC^*)^r.$
For all $\sigma \in \Sigma$ denote $Z_\sigma \subset Hom(M', \bC) \cong \bC^r$ the closed subset defined as follows 
$$
Z_\sigma := \{(z_1, \dots, z_r) \in \bC^r |  \prod_{\text{ $i$ s.t. }\rho_i \nsubseteq \sigma} z_i = 0  \}.
$$
Let $U_\bSi$ be equal to the open subset $\bC^r -  \bigcup_{\sigma \in \Sigma}Z_\sigma$. Note that $U_\bSi$ is invariant under the action of $G_\bSi$. 
\begin{definition}
The toric orbifold $\cX_\bSi$ associated to the stacky fan $\bSi$ is the quotient stack $[U_\bSi/G_\bSi]$.
\end{definition} 
The fact that $\cX_\bSi$ is a DM stack is proved in 
Proposition 3.2 of \cite{BCS}. 
Also, it follows from Definition \ref{def:stackyfan} 
that $\cX_\bSi$ is an orbifold, i.e. it has trivial generic stabilizer.

\begin{remark} 
The coarse moduli space of $\cX_\bSi$ is the toric variety associated to the underlying ordinary fan $\Sigma$,  $X_{\Sigma}$, see \cite{BCS} Proposition 3.7.
If $\Sigma$ is smooth and all the vectors $b_i = \beta(e_i)$ are primitive, then $\cX_\bSi$ is 
a variety and is isomorphic to $X_\Sigma$. This is not the case if the $b_i$-s are primitive 
vectors but $\Sigma$ is not smooth: indeed $\cX_\bSi$ is always smooth.  
\end{remark}

\begin{remark}
As explained in \cite{FMN} toric DM stacks admit 
an intrinsic definition as compactifications of  DM tori (see \cite{FMN} Definition 3.2) carrying a DM torus action, much in the same way as ordinary toric varieties. The case of toric orbifolds 
is especially simple. Indeed, if the generic stabilizer is trivial 
the DM torus of \cite{FMN} 
coincides with the ordinary algebraic torus 
 $T = N \otimes_\bZ \bC^*$ and there is an open embedding $T \hookrightarrow \cX_\bSi$.
\end{remark}

Next we state some results  
which extend to toric orbifolds some familiar properties 
of toric varieties. In the sequel when referring to objects in $\Perf(\cX_\bSi)$ admitting an 
equivariant structure, this will always be with respect to the natural action by the open dense torus $T \hookrightarrow \cX_\bSi$.

\begin{proposition}[\cite{BH}]
\label{prop:resolution}
Let $\cX$ be a toric orbifold, then any coherent sheaf $\cF$ on $\cX$ admits a resolution $\mathcal{P}^{\bullet} = (\dots \rightarrow \mathcal{P}^1 \rightarrow \mathcal{P}^0) \rightarrow \mathcal{F}$ such that, for all $m$, $\mathcal{P}^m$ is isomorphic to a direct sum of line bundles: $\mathcal{P}^m \cong \bigoplus_{i=1}^{i = r} \cL_i$. 
\end{proposition}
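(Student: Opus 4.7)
The plan is to exploit the Cox-type presentation $\cX_\bSi = [U_\bSi/G_\bSi]$ introduced in Section \ref{sec:toric}, which identifies $\cQ Coh(\cX_\bSi)$ with the category of $G_\bSi$-equivariant quasi-coherent sheaves on $U_\bSi \subset \bC^r$; the latter is in turn equivalent to the quotient of the category of $Gale(N)$-graded modules over the Cox ring $S := \bC[z_1,\dots,z_r]$ by the Serre subcategory of modules which are torsion with respect to the irrelevant ideal defining $\bC^r \setminus U_\bSi$. Coherent sheaves on $\cX_\bSi$ correspond under this equivalence to finitely generated graded $S$-modules.

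The first step is to collect enough line bundles. Each character $a \in Gale(N)$ gives a twist of the trivial bundle on $U_\bSi$ by $a$ which descends to a line bundle $\cL_a$ on $\cX_\bSi$; at the module level this is the shifted free module $S(a)$, and the (exact) sheafification functor sends $S(a)$ to $\cL_a$. The second step builds the resolution by generators and syzygies: given $\cF$ coherent on $\cX_\bSi$, lift to a finitely generated graded $S$-module $M$, pick homogeneous generators in degrees $a_1,\dots,a_k$, and obtain a surjection $\bigoplus_{i=1}^k S(-a_i) \twoheadrightarrow M$; sheafifying yields $\cP^0 := \bigoplus_{i=1}^k \cL_{-a_i} \twoheadrightarrow \cF$. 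Iterating this construction on the kernel (which remains coherent, since $S$ is Noetherian and sheafification is exact) produces the desired resolution $\cP^\bullet \to \cF$. Because $S$ is a polynomial ring of finite global dimension, Hilbert's syzygy theorem ensures that the module-level resolution of $M$ by finitely generated graded free modules terminates after finitely many steps, and hence so does $\cP^\bullet$.

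The main obstacle — and the point at which we lean on \cite{BH} and \cite{BCS} — is verifying that the stacky Cox equivalence genuinely controls $\cQ Coh(\cX_\bSi)$ in the sense needed here, in particular that sheafification is exact and that free graded modules $S(a)$ correspond to honest line bundles on the stack $\cX_\bSi$ (rather than on the coarse moduli $X_\Sigma$, where torsion characters of $G_\bSi$ would be lost). Once this stacky bookkeeping is in place, the statement reduces to the classical construction of free resolutions for finitely generated graded modules over the polynomial ring $S$, and we obtain the resolution $\cP^\bullet \to \cF$ by direct sums of line bundles.
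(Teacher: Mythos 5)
Your proposal is correct and follows the same route as the reference the paper cites: the paper's own ``proof'' is just a pointer to Corollary 4.8 of \cite{BH}, and what you have written reconstructs the argument behind that corollary, namely the stacky Cox-ring description of $\cQ Coh(\cX_\bSi)$ as (a Serre quotient of) $Gale(N)$-graded $S$-modules, the observation that the twists $S(a)$ sheafify to line bundles on the stack, and the passage from a graded free resolution of a lift of $\cF$ to a resolution of $\cF$ by direct sums of line bundles via exactness of sheafification. The only minor imprecision is the appeal to Hilbert's syzygy theorem for termination: an arbitrary sequence of surjections from graded free modules need not terminate, so one should take a minimal (or at least eventually-free) resolution; but since the proposition as stated does not demand a finite resolution, this is harmless.
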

\begin{proof}
This is proved in \cite{BH}, see Corollary 4.8.
\end{proof}

\begin{remark}
It will be useful to think of the resolution $\mathcal{P}^{\bullet}$ 
using the language of Postnikov systems. Indeed, we can regard $\cP^ i$ 
as objects in $\Perf(\cX)$ placed in degree zero. Then 
Proposition \ref{prop:resolution} can be restated as saying that $\cF$ is a 
left convolution of $\mathcal{P}^\bullet$.  
\end{remark} 

\begin{proposition}
\label{prop:line}
Any line bundle $\cL$ on a toric orbifold $\cX$ admits a $T$-equivariant structure.
\end{proposition}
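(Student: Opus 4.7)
The plan is to exhibit an explicit lift using the presentation $\cX_\bSi = [U_\bSi / G_\bSi]$ and the ambient torus $(\bC^*)^r$ acting on $\bC^r$. The Cartier dual of the defining sequence $0 \to M \to M' \to \mathrm{Gale}(N) \to 0$ is the short exact sequence
$$
1 \lr G_\bSi \lr (\bC^*)^r \lr T \lr 1,
$$
so the $(\bC^*)^r$-action on $\bC^r$ restricts to an action on the open invariant subset $U_\bSi$, and the induced action of the quotient $T$ on $\cX_\bSi = [U_\bSi/G_\bSi]$ is the natural one.

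First I would pass from line bundles on $\cX_\bSi$ to $G_\bSi$-equivariant line bundles on $U_\bSi$, and then from $T$-equivariant line bundles on $\cX_\bSi$ to $(\bC^*)^r$-equivariant line bundles on $U_\bSi$; under these identifications the forgetful functor $\mathrm{Pic}_T(\cX_\bSi) \to \mathrm{Pic}(\cX_\bSi)$ becomes the restriction map of equivariant Picard groups $\mathrm{Pic}_{(\bC^*)^r}(U_\bSi) \to \mathrm{Pic}_{G_\bSi}(U_\bSi)$. The next step is to observe that $\mathrm{Pic}(U_\bSi) = 0$: because $\Sigma$ is simplicial and $\Sigma(1)$ spans $N_\bR$, the complement $\bC^r \setminus U_\bSi$ is a union of coordinate subspaces of codimension $\geq 2$, so restriction from $\bC^r$ is an isomorphism on Picard groups and $\mathrm{Pic}(\bC^r)=0$. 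Consequently every (equivariant) line bundle on $U_\bSi$ is, as a bundle, trivial, and equivariant structures amount to characters. This yields the canonical identifications
$$
\mathrm{Pic}(\cX_\bSi) \cong \mathrm{char}(G_\bSi), \qquad \mathrm{Pic}_T(\cX_\bSi) \cong \mathrm{char}((\bC^*)^r),
$$
with the forgetful map identified with the character restriction $\mathrm{char}((\bC^*)^r) \to \mathrm{char}(G_\bSi)$.

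To conclude, I would invoke the standard fact that for any closed embedding of a diagonalizable group into an algebraic torus, the restriction map of character groups is surjective (equivalently, $\bC^*$ is an injective abelian group and the Cartier dual of the injection $G_\bSi \hookrightarrow (\bC^*)^r$ corresponds to a surjection $\bZ^r = M' \twoheadrightarrow \mathrm{Gale}(N)$). Thus the character $\chi \in \mathrm{char}(G_\bSi)$ corresponding to $\cL$ extends to a character $\tilde\chi \in \mathrm{char}((\bC^*)^r)$, which equips the trivial bundle on $U_\bSi$ with a compatible $(\bC^*)^r$-equivariant structure refining the $G_\bSi$-equivariant one; descending, this produces a $T$-equivariant structure on $\cL$.

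The only delicate point is the codimension bound on $\bC^r \setminus U_\bSi$ needed to deduce $\mathrm{Pic}(U_\bSi)=0$; this is a purely combinatorial statement about the irrelevant locus of a simplicial stacky fan, but it is the one place where the hypotheses of Definition \ref{def:stackyfan} (simpliciality and $\Sigma(1)$ spanning $N_\bR$) enter nontrivially. Everything else is formal manipulation of equivariant Picard groups and the elementary representation theory of diagonalizable groups.
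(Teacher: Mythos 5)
Your proof is correct and takes the same route as the paper: pass to the Cox presentation $[U_\bSi/G_\bSi]$ and identify line bundles on $\cX_\bSi$ (resp.\ $T$-equivariant line bundles) with $G_\bSi$-equivariant (resp.\ $(\bC^*)^r$-equivariant) line bundles on $U_\bSi$. The paper's own proof is terser: after making that identification it simply invokes the classical fact that line bundles on an ordinary toric variety always admit an equivariant structure. As literally stated, that fact only produces \emph{some} $(\bC^*)^r$-equivariant structure on the underlying bundle; what one really needs is that the forgetful map $\Pic_{(\bC^*)^r}(U_\bSi) \to \Pic_{G_\bSi}(U_\bSi)$ is surjective, and you supply exactly that missing step. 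Your chain of observations --- $\Pic(U_\bSi)=0$ because the irrelevant locus has codimension $\geq 2$ (every primitive collection in a simplicial fan has at least two elements), hence both equivariant Picard groups reduce to character groups, and the restriction $M' \twoheadrightarrow \mathrm{Gale}(N)$ is surjective by construction --- is the standard proof of the classical fact, so in substance the two arguments coincide, with yours being the complete, self-contained version.
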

\begin{proof}
Line bundles on $\cX_\bSi = [U_\bSi/G_\bSi]$ are given by $G_\bSi$-equivariant line bundles on $U_\bSi$. The statement then follows because $U_\bSi$ is an ordinary toric variety 
and line bundles 
on ordinary toric varieties always admit an equivariant structure.
\end{proof}

\begin{corollary} 
\label{cor:gen}
\begin{itemize}
\item Let $L$ be the set of line bundles on $\cX$, then $\langle L \rangle = \Perf(\cX)$.
\item Let $E_T$ be the set of objects in $\Perf(\cX)$ admitting an equivariant 
structure, then $\langle E_T \rangle = \Perf(\cX)$.
\end{itemize}
\end{corollary}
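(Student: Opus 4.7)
My plan is to derive both statements from a single reduction to coherent sheaves. The second statement is immediate from the first: by Proposition \ref{prop:line} every line bundle admits a $T$-equivariant structure, so $L \subseteq E_T$ and hence $\langle L \rangle \subseteq \langle E_T \rangle \subseteq \Perf(\cX)$. Once $\langle L \rangle = \Perf(\cX)$ is known, the outer inclusion is forced to be an equality and the second statement follows automatically. So the whole task is to prove $\langle L \rangle = \Perf(\cX)$.

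For this, I would first reduce from arbitrary perfect complexes to coherent sheaves. Since $\cX$ is a smooth DM stack, any $\cF \in \Perf(\cX)$ is quasi-isomorphic to a bounded complex of coherent sheaves, with only finitely many nonzero cohomology sheaves $\cH^n(\cF)$. Using the canonical truncation triangles
\[
\tau_{\leq n-1}\cF \longrightarrow \tau_{\leq n}\cF \longrightarrow \cH^n(\cF)[-n]
\]
and induction on the amplitude, $\cF$ is built from shifts of its cohomology sheaves by finitely many cofiber sequences. Since a triangulated dg subcategory is closed under shifts and cofiber sequences, it suffices to show that each coherent sheaf lies in $\langle L \rangle$. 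Given a coherent sheaf $\cG$, Proposition \ref{prop:resolution} provides a resolution $\cP^{\bullet} \to \cG$ with each $\cP^{m}$ a direct sum of line bundles. The remark following Proposition \ref{prop:resolution} identifies $\cG$ as a left convolution of $\cP^{\bullet}$, and the remark on Postnikov systems preceding this corollary then gives $\cG \in \langle \cP^{c}, \dots, \cP^{0} \rangle \subseteq \langle L \rangle$, the last inclusion using that a triangulated dg subcategory is closed under finite direct sums.

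The main obstacle I anticipate is ensuring the resolution $\cP^{\bullet}$ is of finite length, since the Postnikov system formalism set up in the paper requires a finite indexing. This should follow from the finite global dimension of the smooth toric DM stack $\cX$: one truncates the (a priori possibly unbounded) resolution of Proposition \ref{prop:resolution} at depth $\dim \cX$, at which point the remaining kernel is locally free, and then uses either the precise form of \cite{BH} Corollary 4.8 in the smooth case or a final application of Proposition \ref{prop:resolution} to that kernel to produce a bona fide finite Postnikov system of line-bundle sums. Once this finiteness is secured the remainder of the argument is purely formal, and combining the reduction of the second paragraph with this resolution yields $\Perf(\cX) \subseteq \langle L \rangle$; the reverse inclusion is trivial.
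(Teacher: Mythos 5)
Your proposal is correct and follows the (essentially unique) argument the paper leaves implicit, since no proof is given for this corollary: reduce perfect complexes to coherent sheaves via the cohomology filtration, then invoke Proposition~\ref{prop:resolution} together with the Postnikov-system remark to place every coherent sheaf in $\langle L \rangle$, and deduce the second bullet from $L \subseteq E_T$ (Proposition~\ref{prop:line}). Your caution about the resolution's finiteness is well taken and is exactly the point the paper's Postnikov remark silently assumes; the cited Corollary~4.8 of \cite{BH} does supply finite-length resolutions by sums of line bundles on smooth toric DM stacks, so no further argument is needed.
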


\section{Constructible sheaves and cragged lagrangians}
\label{sec:constructible}
In this section we prove a few general results about constructible sheaves which will play a key role in our argument. We start by 
giving some more explanations on the notion of singular support. For the rest of this section we assume that $M$ is a real analytic manifold. Let $\cF$ be an object in $\Sh_c(M)$:

\begin{definition}[\cite{KS} Definition 5.1.2]
The singular support $SS(\cF)$ of $\cF$ is the subset of $T^*M$ defined by the following condition: 
if $p$ is a point in $T^*M$, $p \notin SS(\cF)$ if and only if there exists an open neighborhood $U$ of $p$ having the property that for all $x_0 \in M$,
and for all $1$-differentiable function $\psi$ defined in a neighborhood of $x_0$ and such that 
$\psi(x_0) = 0$ and $d\psi(x_0) \in U$,
we have an isomorphism $(R\Gamma_{\{x \in M \text{ s. t. } \psi(x) \geq 0 \}}(\cF))_{x_0} \cong 0$.
\end{definition}

In Theorem \ref{thrm:ss-prop} we gather a few important properties of the singular support, referring to \cite{KS} Chapter V and VI for proofs. 
We say that a subset $A \subset T^*M$ is \emph{conical} if it is invariant under fiber-wise dilation by positive real numbers, that is if for all $z \in M$ and $\lambda \in \bR_{\ge 0}$
$\lambda (A \cap T_z^* M) \subset A$.

\begin{theorem}
\label{thrm:ss-prop}
Let $\cF \in \Sh_c(M)$, then 
\begin{enumerate}
\item $SS(\cF) \subset T^*M$ is a conical Lagrangian subset. 
\item The support of $\cF$ is equal to $SS(\cF) \cap T^*_MM$.
\item If $\cS$ is a Whitney stratification of $M$ and $\cF$ is constructible with respect to 
$\cS$, then $SS(\cF) \subset \bigcup_{S \in \cS} T^*_SM$.
\item If $\cF_1 \rightarrow \cF_2 \rightarrow \cF_3$ is a cofiber sequence 
in $\Sh_c(M)$, $SS(\cF_2) \subset SS(\cF_1) \cup SS(\cF_3)$.
\end{enumerate}
\end{theorem}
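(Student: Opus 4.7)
The statement gathers four standard properties of the singular support, so my proposal is to treat each item separately, handling the combinatorial/tautological ones by hand and invoking \cite{KS} for the deep one.

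For (1), conicality is essentially immediate from the definition: if $\psi$ is a test function with $\psi(x_0) = 0$ and $d\psi(x_0) = p$, then for any $\lambda > 0$ the rescaled function $\lambda \psi$ also vanishes at $x_0$, defines the same half-space $\{x : \psi(x) \geq 0\}$, and has differential $\lambda p$ at $x_0$. Hence the condition defining the complement of $SS(\cF)$ is stable under fiber-wise dilation by $\bR_{>0}$, proving that $SS(\cF)$ is conical. The assertion that $SS(\cF)$ is Lagrangian (more precisely, isotropic, and then Lagrangian by dimension considerations once (3) is available) is the celebrated involutivity theorem of Kashiwara-Schapira. I would not attempt to reprove this: its proof requires the full microlocal machinery (contact transformations, microlocal cut-off), and I would just cite \cite{KS} Theorem 6.5.4 and Theorem 8.4.2. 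This is the part I identify as the main obstacle: no short argument is available.

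For (2), the inclusion $\supp(\cF) \subset SS(\cF) \cap T^*_M M$ is clear, since if $\cF_{x_0} \neq 0$ then taking $\psi \equiv 0$ in a neighborhood of $x_0$ gives $(\Gamma_{\{\psi \geq 0\}}(\cF))_{x_0} = \cF_{x_0} \neq 0$, showing that the zero covector at $x_0$ lies in $SS(\cF)$. Conversely, if $x_0 \notin \supp(\cF)$ then $\cF$ is zero in a neighborhood of $x_0$, and the condition in the definition is trivially satisfied for every covector above $x_0$. So (2) is a formal consequence of the definition.

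For (3), the plan is to argue stratum by stratum. Over a single stratum $S$ the restriction $\cF|_S$ is locally constant, and a direct calculation shows that the singular support of a locally constant sheaf on $S$ is contained in the zero section of $T^*S$; passing back to $T^*M$, the covectors contributed over $S$ lie in the conormal bundle $T^*_S M$. This is a microlocal version of the ``smoothness'' of locally constant sheaves; see \cite{KS} Proposition 8.4.1. Summing over strata of $\cS$ gives the claimed containment.

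For (4), apply the functor $\Gamma_{\{\psi \geq 0\}}$ to the cofiber sequence $\cF_1 \to \cF_2 \to \cF_3$ and then take the stalk at $x_0$; this yields an exact triangle whose outer terms vanish whenever $p = d\psi(x_0)$ is avoided by $SS(\cF_1) \cup SS(\cF_3)$, forcing the middle term to vanish as well. Thus $p \notin SS(\cF_2)$, which gives $SS(\cF_2) \subset SS(\cF_1) \cup SS(\cF_3)$. This is \cite{KS} Proposition 5.1.3 and requires no real work beyond exactness of stalks and of $\Gamma_Z$ on distinguished triangles.
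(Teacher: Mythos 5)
The paper gives no proof of this theorem; it is stated as a compendium of standard facts with a pointer to Chapters V and VI of \cite{KS}. Your blind reconstruction is correct and consistent with that treatment: the direct arguments for conicality, for item (2), and for item (4) via exactness of $\Gamma_Z$ and stalks on triangles are sound, and you appropriately defer the genuinely deep content (involutivity and the conormal bound for locally constant sheaves) to the right places in \cite{KS}.
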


As we mentioned in Section \ref{sec:back-cons} if $\Lambda \subset T^*M$ is a conical Lagrangian set, we can consider the subcategory $\Sh_c(M, \Lambda)$ of 
 constructible sheaves $\cF$ such that $SS(\cF) \subset \Lambda$: Theorem \ref{thrm:ss-prop} (4) shows that $\Sh_c(M, \Lambda)$ is triangulated.  
\begin{remark}
Let $\cS$ be a Whitney stratification, and let $\Lambda_\cS = \bigcup_{S \in \cS} T^*_SM$. 
Then by Theorem \ref{thrm:ss-prop} (3), $Sh_c(M, \cS) \subset Sh_c(M, \Lambda_\cS)$. 
It is easy to see that this inclusion is an equivalence.
\end{remark}

The category $\Sh_c(M)$ has two additional structures which will be of interest for us: it is a symmetric monoidal category under tensor product of sheaves;  also,  
it comes equipped with a tautological t-structure whose  heart is given by sheaves placed in degree $0$.
However for a general conical Lagrangian $\Lambda$ the tensor product does not restrict to the subcategory 
$\Sh_c(M, \Lambda) \subset \Sh_c(M)$, and similarly for the  t-structure. 
In the rest of this Section we introduce a class of Lagrangians, called \emph{cragged}, having the property that both the t-structure and the tensor product restrict to $\Sh_c(M, \Lambda)$.  

\begin{remark}
Let $\cS$ be a Whitney stratification. It is easy to see that $\Sh_c(M, \cS)$ is closed under tensor product, and that the tautological t-structure of $Sh_c(M)$ restricts as well. Lagrangians of the form $\Lambda_\cS$ are encompassed by our notion of cragged Lagrangian. 
\end{remark}

We will need a notion of stratification which is more flexible than Whitney's. 
The exact properties that we require are specified below.
\begin{definition}
\label{def:stratification}
Let $M$ be a manifold. We call a \emph{stratification} of $M$ the datum of a preorder $I$, and a locally finite collection of locally closed subsets of $M$ indexed by $I$, 
$\{Z_i\}_{i \in I}$, with the following properties
\begin{itemize}
\item $\bigcup_{i \in I} Z_i = X$,
\item for all $i, j$, if $i \neq j$ then $Z_i \cap Z_j = \varnothing$,
\item $Z_i \cap \overline{Z_j} \neq \varnothing$ if and only if 
$Z_i$ is contained in $\overline{Z_j}$, and this holds if and only of $i \leq j$.
\end{itemize}
\end{definition}

\begin{definition}
\label{def:cragged}
\begin{enumerate}
\item Let $U$ be an open subset of a vector space $E$. We say that a closed subset 
$A \subset T^*U$ is \emph{globally cragged} if there is a stratification $\{Z_i\}_{i \in I}$ of $U$,
and a collection of closed convex cones $\{V_i\}_{i \in I}$ in $E^*$ such that:
$A = \bigcup_{i \in I} Z_i \times V_i$.

\item More generally, suppose that $M$ is a manifold and that $A \subset T^*M$ is a closed subset. We say that 
$A$ is \emph{cragged} if there is an atlas ${(U_\alpha, \phi_\alpha)}_{\alpha \in J}$ of $M$ such that, for all $\alpha \in J$,  
$(\phi_\alpha, d\phi_\alpha)$ is a trivialization of the cotangent bundle that maps $A_\alpha = A \cap T^*U_\alpha$ onto a globally cragged set.

\end{enumerate}
\end{definition}

\begin{lemma}
\label{lem:containment}
Under the assumptions of Definition \ref{def:cragged} (1), we also have that if $i \leq j$ then $V_i \supset V_j$.
\end{lemma}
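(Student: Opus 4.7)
The plan is to prove the containment directly from closedness of $A$ together with the stratification property. The trivialization $T^*U \cong U \times E^*$ identifies $A$ with the set-theoretic union $\bigcup_{k\in I} Z_k \times V_k$ inside $U \times E^*$, and the containment $V_i \supset V_j$ should be read off by approaching points of $Z_i$ through sequences in $Z_j$.

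More precisely, I would argue as follows. Assume $i \leq j$, and that $Z_i$ is nonempty (otherwise the statement is vacuous). By Definition \ref{def:stratification}, the inequality $i \leq j$ gives $Z_i \subset \overline{Z_j}$. Pick any $x \in Z_i$ and any $v \in V_j$. Choose a sequence $y_n \in Z_j$ with $y_n \to x$. Because $(y_n,v) \in Z_j \times V_j \subset A$ for each $n$, and $A$ is closed in $T^*U$ by assumption, the limit $(x,v)$ lies in $A$ as well.

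Now I use the disjointness clause in Definition \ref{def:stratification}: the strata $\{Z_k\}_{k \in I}$ partition $U$, so $x$ belongs to $Z_k$ only for $k = i$. Since $A = \bigsqcup_{k} Z_k \times V_k$ as a set over $U$ (the base-point $x$ determines the stratum), the condition $(x,v) \in A$ forces $(x,v) \in Z_i \times V_i$, hence $v \in V_i$. As $v \in V_j$ was arbitrary, this gives $V_j \subset V_i$, proving the lemma.

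The only place to be careful is the extraction ``$(x,v) \in A$ and $x \in Z_i$ implies $v \in V_i$,'' which is where the disjointness of the strata is essential; without it one could only conclude that $v$ lies in some $V_k$ with $x \in Z_k$. I do not expect a genuine obstacle here, since the strata partition $U$ by Definition \ref{def:stratification}, so the argument is essentially a one-line closure argument after setting up the limiting sequence.
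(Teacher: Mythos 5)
Your proof is correct and takes essentially the same route as the paper's: both pick a point in $Z_i$, approach it by a sequence in $Z_j$ paired with a fixed covector $v \in V_j$, and use closedness of $A$ to conclude that the limit lies in $A$, hence $v \in V_i$. The only minor difference is that you spell out explicitly how the disjointness of the strata identifies the fiber of $A$ over the limit point with $V_i$, a step the paper leaves implicit.
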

\begin{proof} 
If $z$ lies in $Z_i$, there is a sequence $(x_n)_{n \in \bN}$ in $Z_j$ having the property that $x_n$ converges to $z$. Let $\xi$ be in $V_j$, and consider the sequence $(x_n, \xi)_{n \in \bN} \rightarrow (z, \xi)$. As $A$ is closed and $(x_n, \xi)_n$ belongs to $A$ for all $n$, the limit $(z, \xi)$ belongs to $A$ as well: thus $\xi$ lies in $V_i$. 
\end{proof}
Denote $+: T^*M \times_M T^*M \rightarrow T^*M$ the map of fiberwise addition. If $A$ and $B$ are subsets of $T^*M$, write $A + B$ for the image of $A \times_M B$ under $+$. Following \cite{KS} denote $A \hat{+} B$ the subset of $T^*M$ given in coordinates as follows:
 $(z, \zeta) \in T^*M$ lies in $A \hat{+} B$ if and only if there are sequences 
 $(x_n, \xi_n)_{n \in \bN}$ in $A$ and 
 $(y_n, \eta_n)_{n \in \bN}$ in $B$ such that
\begin{enumerate}
\item $x_n \stackrel{n} {\rightarrow} z$, $y_n \stackrel{n} {\rightarrow} z$, and $\xi_n + \eta_n \stackrel{n} {\rightarrow} \zeta$, and
\item $|x_n - y_n| |\xi_n|\stackrel{n} {\rightarrow} 0$.
\end{enumerate} 

\begin{definition}
A subset $A \subset T^*M$ is called \emph{pre-additive} (resp.\emph{additive}) if it is conical, and $A + A = A $  (resp. $A \hat{+} A = A$).  
\end{definition}
Note that an additive set is also pre-additive.  

\begin{lemma}
\label{lem:criterion}
Let $M$ be a manifold. If $A \subset T^*M$ is a cragged set,  
then $A$ is additive. 
\end{lemma}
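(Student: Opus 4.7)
The plan is to reduce additivity to a local question and then exploit the combinatorial structure of the stratification together with Lemma \ref{lem:containment}. Since the defining conditions of $A \mathbin{\hat{+}} A$ are local around each point of $T^*M$, it suffices to check $A \mathbin{\hat{+}} A = A$ in one chart $(U_\alpha,\phi_\alpha)$ of the cragged atlas, where we may assume $A = \bigcup_{i \in I} Z_i \times V_i$ is globally cragged in the sense of Definition \ref{def:cragged}(1). Conicity of $A$ is clear since each $V_i$ is a cone, so I only need to verify $A \mathbin{\hat{+}} A = A$.

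The inclusion $A \subset A \mathbin{\hat{+}} A$ is immediate: for $(z,\zeta) \in Z_{i_0} \times V_{i_0} \subset A$, since $V_{i_0}$ is a cone we have $\zeta/2 \in V_{i_0}$, and the constant sequences $(x_n,\xi_n) = (y_n,\eta_n) = (z, \zeta/2)$ realize $(z,\zeta)$ as a point of $A \mathbin{\hat{+}} A$.

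For the reverse inclusion, fix $(z,\zeta) \in A \mathbin{\hat{+}} A$ with witnessing sequences $(x_n,\xi_n),(y_n,\eta_n) \in A$ satisfying the two conditions of the definition, and let $Z_{i_0}$ be the stratum containing $z$. By local finiteness of the stratification, only finitely many strata meet a neighborhood of $z$; after passing to a subsequence I may assume $x_n \in Z_i$ and $y_n \in Z_j$ for fixed indices $i,j \in I$. Since $x_n \to z$ we have $Z_{i_0} \cap \overline{Z_i} \neq \varnothing$, so by the third bullet of Definition \ref{def:stratification} we get $Z_{i_0} \subset \overline{Z_i}$ and $i_0 \leq i$; Lemma \ref{lem:containment} then yields $V_i \subset V_{i_0}$, and similarly $V_j \subset V_{i_0}$. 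Consequently $\xi_n,\eta_n \in V_{i_0}$ for all large $n$, and because $V_{i_0}$ is a convex cone (hence closed under addition) we have $\xi_n + \eta_n \in V_{i_0}$. Passing to the limit and using closedness of $V_{i_0}$ gives $\zeta \in V_{i_0}$, so $(z,\zeta) \in A$.

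There is no serious obstacle here: the delicate second condition $|x_n - y_n||\xi_n| \to 0$ in the definition of $\hat{+}$ plays no role — the proof works already with the naive Minkowski sum, which is an expected consequence of the fact that the cone fibers of a cragged set stabilize along strata closures. The only care needed is to justify the reduction to one chart and to invoke local finiteness of the stratification in order to pass to a single pair of strata along the approximating sequences.
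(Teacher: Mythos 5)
Your proof is correct and follows essentially the same strategy as the paper's: reduce to a single chart where $A$ is globally cragged, use the stratification axioms and Lemma \ref{lem:containment} to force $\xi_n,\eta_n$ into the cone $V_{i_0}$ attached to the stratum of $z$, then conclude by convexity and closedness of $V_{i_0}$. The only cosmetic difference is that you pass to a subsequence to fix the strata $Z_i, Z_j$, whereas the paper shrinks a neighborhood of $z$ using local finiteness to ensure $i_z \le i_p$ for all nearby $p$; these are two phrasings of the same observation. Your closing remark that the condition $|x_n-y_n||\xi_n|\to 0$ is not used (so the naive Minkowski sum would do) is accurate and matches how the paper's argument actually runs.
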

\begin{proof}
Since additivity is a local property, we can assume that 
$M = U$ where $U$ is an open subset in a vector space $E$, 
and that 
$A$ is globally cragged.
We use the notation of Definition \ref{def:cragged} (1). If $p$ lies in $M$ we denote $i_p$ the element of $I$ such that $p$ belongs to $Z_{i_p}$.

Fix $z \in M$. We need to show that we have an inclusion $(A \hat{+} A)_z \subset A_z$. 
Note that for all points $p$ in a sufficiently small neighborhood of $z$, we have that $i_z \leq i_p$. 
Let $(z, \xi)$ be in $(A \hat{+} A)_z$. Then there are sequences 
$(x_n, \xi_n)_{n \in \bN}$ and $(y_n, \eta_n)_{n \in \bN}$ in $A$ such that $x_n \stackrel{n} {\rightarrow} z$, $y_n \stackrel{n} {\rightarrow} z$, and $\xi_n + \eta_n \stackrel{n} {\rightarrow} \zeta$. 
We can assume that $i_z \leq i_{x_n}$ and $i_z \leq i_{y_n}$ for all $n$. Since $A$ is cragged, this implies that for all $n$ both $(z, \xi_n)$ and $(z, \eta_n)$ 
belong to $A_z$. Also $A_z$ is a closed convex cone: since 
the sequence $(z, \xi_n + \eta_n)_{n \in \bN}$ lies in $A_z$, the limit $(z, \xi)$ to which this sequence converges belongs to $A_z$ as well. That is, $(z, \xi) \in A_z$  as we wanted to prove.
\end{proof}

Suppose that $M$, $A \subset T^*M$ and $\{U_\alpha, \phi_\alpha\}_{\alpha \in J}$ satisfy the conditions described in Definition \ref{def:cragged} (2) in the following weaker form. 
The chart $(\phi_\alpha, d\phi_\alpha)$ maps $A_{\alpha}= T^ *U_{\alpha} \cap A$ onto a set of the form $\bigcup_{i \in I} P_i \times V_i$, where $\{V_i\}_{i \in I}$ are 
convex cones, but we do not assume that $\{P_i\}_{i \in I}$ is a stratification: instead  
$\{P_i\}_{i \in I}$ is a locally finite collection of closed subsets such that $\bigcup_{i \in I}P_i = \phi_\alpha(U_\alpha) $. We will refer to this condition as 
$(2')$. 

Then the following holds:

\begin{lemma}
\label{lem:cragged} Let $A \subset T^*M$ be a closed subset. The following are equivalent: 
\begin{itemize}
\item $A$ is cragged. 
\item $A$ is pre-additive and satisfies condition $(2')$. 
\end{itemize}
\end{lemma}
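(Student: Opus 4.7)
The plan is to prove each implication separately. The forward direction is a routine verification, while the backward direction requires a nontrivial refinement to upgrade the looser data of $(2')$ to a genuine stratification.

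For the forward direction, assume $A$ is cragged. Then Lemma \ref{lem:criterion} gives additivity of $A$, hence pre-additivity. To verify $(2')$, fix a cragged chart in which $A$ has the form $\bigcup_{i \in I} Z_i \times V_i$ for a stratification $\{Z_i\}_{i \in I}$, and set $P_i := \overline{Z_i}$. The collection $\{P_i\}$ is locally finite, since any open neighborhood meeting $\overline{Z_i}$ already meets $Z_i$. The equality $\bigcup_i P_i \times V_i = \bigcup_i Z_i \times V_i$ follows: $\supset$ is trivial, and $\subset$ uses closedness of $A$ applied to sequences $(z_n, \xi)$ with $z_n \in Z_i$ converging to $z \in \overline{Z_i}$ and $\xi \in V_i$ fixed.

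For the backward direction, suppose $A$ is pre-additive and satisfies $(2')$, and work in a chart with $A = \bigcup_{i \in I} P_i \times V_i$. Since $A$ is closed we may replace each $V_i$ by $\overline{V_i}$ and assume the $V_i$ are closed convex cones. For each $z$ set $F(z) := \{i \in I : z \in P_i\}$, finite by local finiteness of $\{P_i\}$, and for each finite subset $J \subset I$ define $Z_J := \bigcap_{i \in J} P_i \setminus \bigcup_{i \notin J} P_i$. These $Z_J$ are pairwise disjoint, locally closed, locally finite, and partition the chart. For $z \in Z_J$ the fiber $A_z = \bigcup_{i \in J} V_i$ is a finite union of closed sets, and pre-additivity $A_z + A_z = A_z$ together with conicality makes it closed under convex combinations; thus $V_J := A_z$ is a closed convex cone depending only on $J$, and $A$ decomposes locally as $\bigcup_J Z_J \times V_J$.

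It remains to refine $\{Z_J\}$ to a stratification in the sense of Definition \ref{def:stratification} while preserving the decomposition. Whenever $Z_J \cap \overline{Z_{J'}}$ is a proper nonempty subset of $Z_J$, split $Z_J$ into the two locally closed pieces $Z_J \cap \overline{Z_{J'}}$ and $Z_J \setminus \overline{Z_{J'}}$, assigning each the cone $V_J$. Preordering the resulting pieces by $Z \leq Z'$ iff $Z \subset \overline{Z'}$ enforces the frontier condition by construction, and the decomposition $A = \bigcup Z \times V$ survives because the cones are unchanged under splitting. The principal obstacle is precisely this refinement: the naive partition by $F$-levels is too coarse to meet the frontier condition, and one must argue that iterative splitting respects both local closedness and local finiteness of the resulting family. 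Both properties trace back to local finiteness of $\{P_i\}$, which guarantees that only finitely many $Z_J$ appear near any given point and that the splitting terminates locally; this point deserves careful attention in the full write-up.
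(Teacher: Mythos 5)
Your overall strategy mirrors the paper's: handle the forward direction via Lemma \ref{lem:criterion}, then for the backward direction partition the base by fiber data (the paper partitions by the value of the fiber $A_p$, you partition by $F(z)=\{i : z\in P_i\}$, a refinement of the paper's partition), note that pre-additivity forces each fiber to be a closed convex cone, and finally refine the partition into a stratification. The essential difference is how the refinement step is handled, and this is where your proof has a genuine gap.

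The paper first shrinks $U$ so that only finitely many $P_i$ and finitely many distinct fibers $W_1,\dots,W_n$ occur, observes that the level sets $Q_j=\{p : A_p=W_j\}$ are \emph{constructible} sets (finite Boolean combinations of the closed $P_i$), and then invokes a standard fact (the proof of Theorem 18.11 of Whitney \cite{W}) that any locally finite partition into constructible sets can be refined to a stratification satisfying the frontier condition of Definition \ref{def:stratification}. Your iterative-splitting scheme is attempting to reprove this fact from scratch, and you yourself flag that termination and preservation of local finiteness are not established. As written this is a hole: nothing in the splitting step controls how many new pieces are produced, nor guarantees the process stabilizes, nor rules out a piece becoming non--locally-closed after several splits along different $\overline{Z_{J'}}$. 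Since your $Z_J$ are manifestly constructible, the cleanest fix is simply to cite the Whitney-type refinement theorem as the paper does, rather than attempt an ad hoc argument.

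Two smaller remarks. First, your forward direction is actually more complete than the paper's, which silently treats $(2')$ as obvious: the $Z_i$ in a stratification are only locally closed, whereas $(2')$ asks for closed $P_i$, so the passage to $P_i=\overline{Z_i}$ together with the verification that $\bigcup\overline{Z_i}\times V_i$ still equals $A$ is a real step, and you carry it out correctly. Second, your observation that one may replace each $V_i$ by $\overline{V_i}$ using closedness of $A$ is correct and harmless, and the deduction that pre-additivity plus conicality gives convexity of $A_z$ is right.
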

\begin{proof}
Lemma \ref{lem:criterion} shows that cragged implies pre-additive, so we only have to prove the other implication. Let $\alpha \in J$: using $\phi_\alpha$ we identify $T^*U_\alpha$ and $U_\alpha \times E^*$, and we set  $U := U_\alpha$ and $A:= A \cap T^*U$.
%
By pre-additivity the fibers of $A$ over the points of 
$U$ are cones: also, they can be written as finite unions of elements in $\{V_i\}_{i \in I}$. By shrinking 
$U$ we can assume that, regarded as subsets of $E^*$,  there are only finitely many fibers of $A$ over $U$. We denote them $W_1, \dots, W_n$.  
By further shrinking $U$ we can also assume that the intersection of $\{P_i\}_{i \in I}$ with $U$ is a finite collection of closed subsets, which we denote 
$ P_1, \dots, P_m$.

Recall that a constructible set is a finite union of locally closed sets.
Given any partition of a space such that all 
its elements are constructible, we can refine it and get a stratification in the sense of Definition \ref{def:stratification}, see the proof of Theorem 18.11 in \cite{W}. Now consider the partition $Q_1, \dots, Q_n$ of $U$ defined by the following rule: $p \in Q_j$ if $A_p = W_j$. Note that the $Q_j$-s are obtained from the closed subsets $P_1, \dots, P_m$ by taking unions, intersections, and complements: thus, they are constructible. Let  
$\{Z_i\}_{i \in I'}$ be a stratification refining the partition $\{Q_1 \dots Q_n\}$. By construction, we can write $A = \bigcup_{i \in I'}Z_i \times W_i$, where $W_i \in \{ W_1, \dots, W_n\}$ are convex cones. That is, $A$ is cragged, as we wanted to show.
\end{proof}

\begin{lemma}
\label{lem:additive}
Let $\Lambda \subset T^*M$ be a conical Lagrangian subset and assume further that $\Lambda$ is additive. Then $\Sh_c(M, \Lambda)$ is closed under tensor product.
\end{lemma}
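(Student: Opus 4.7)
The strategy is to apply the Kashiwara--Schapira estimate on the singular support of a tensor product and then use additivity of $\Lambda$ directly. For any pair $\cF, \cG \in \Sh_c(M)$ I would invoke the bound
$$
SS(\cF \otimes \cG) \subset SS(\cF) \hat{+} SS(\cG).
$$
This is a standard consequence of the microlocal machinery of \cite{KS}: writing $\cF \otimes \cG = \delta^{-1}(\cF \boxtimes \cG)$ for the diagonal $\delta \colon M \hookrightarrow M \times M$, one combines the external product bound $SS(\cF \boxtimes \cG) \subset SS(\cF) \times SS(\cG)$ with the pullback estimate from \cite{KS} Proposition 5.4.4, whose formulation in the non-transverse case is phrased precisely in terms of the Whitney sum $\hat{+}$.

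With this in hand the argument is almost immediate. If $\cF, \cG$ lie in $\Sh_c(M, \Lambda)$, then $SS(\cF), SS(\cG) \subset \Lambda$ by definition, and so
$$
SS(\cF \otimes \cG) \subset SS(\cF) \hat{+} SS(\cG) \subset \Lambda \hat{+} \Lambda = \Lambda,
$$
where the last equality is exactly the additivity hypothesis. To conclude that $\cF \otimes \cG \in \Sh_c(M, \Lambda)$ one only needs constructibility in the strong sense: this is routine, since a common refinement of Whitney stratifications adapted to $\cF$ and to $\cG$ is adapted to $\cF \otimes \cG$, and finite dimensionality of the stalks is preserved under tensor product.

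The only point that requires any care is invoking the $\hat{+}$ version of the tensor product estimate rather than the more familiar transverse version $SS(\cF \otimes \cG) \subset SS(\cF) + SS(\cG)$ of \cite{KS} Proposition 5.4.14, which needs $SS(\cF) \cap SS(\cG)^a \subset T^*_M M$ and thus does not directly apply (the relevant Lagrangians here are highly non-transverse to themselves). The non-transverse bound via $\hat{+}$, extracted from the diagonal pullback argument above, is exactly tailored to the additivity hypothesis and is what makes the statement work. I expect this bookkeeping to be the only real obstacle; once the correct estimate is identified the proof reduces to a one-line chain of inclusions.
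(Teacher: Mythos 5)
Your proof is correct and takes essentially the same approach as the paper's: both invoke the Kashiwara--Schapira bound $SS(\cF \otimes \cG) \subset SS(\cF) \hat{+} SS(\cG)$ and then conclude immediately from the hypothesis $\Lambda \hat{+} \Lambda = \Lambda$. The paper simply cites Chapter 5 of Kashiwara--Schapira (and Guillermou--Schapira) for the estimate, while you unpack its origin via the diagonal pullback, but the argument is the same.
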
 
\begin{proof}
This follows from results contained in Chapter 5 of \cite{KS} (see also Theorem 1.1 of \cite{GS}): in particular, we have an inclusion $SS(F \otimes G) \subset SS(F) \hat{+} SS(G)$. If $\Lambda$ is additive, then $SS(F) \hat{+} SS(G) \subset \Lambda \hat{+} \Lambda = \Lambda$.  
\end{proof}
\begin{corollary}
\label{cor:tensor}
If $\Lambda \subset T^*M$ is a cragged conical Lagrangian, then $\Sh_c(M, \Lambda)$ is closed under tensor product.
\end{corollary}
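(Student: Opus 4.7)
The plan is to obtain the statement as an immediate consequence of the two preceding lemmas, chained together. By hypothesis $\Lambda$ is cragged, so Lemma \ref{lem:criterion} applies and yields that $\Lambda$ is additive (in the sense $\Lambda \hat{+} \Lambda = \Lambda$, noting that conicality is built into the definition of cragged via the convex cones $V_i$). Then, since $\Lambda$ is by hypothesis a conical Lagrangian and is now known to be additive, the assumptions of Lemma \ref{lem:additive} are met, and its conclusion is exactly that $\Sh_c(M, \Lambda)$ is closed under tensor product.

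Concretely, given $\cF, \cG \in \Sh_c(M, \Lambda)$, I would invoke the Kashiwara--Schapira bound $SS(\cF \otimes \cG) \subset SS(\cF) \hat{+} SS(\cG)$ already cited in the proof of Lemma \ref{lem:additive}, and conclude
\[
SS(\cF \otimes \cG) \subset \Lambda \hat{+} \Lambda = \Lambda,
\]
using additivity in the final equality. Thus $\cF \otimes \cG$ lies in $\Sh_c(M, \Lambda)$.

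There is no real obstacle here: the content of the corollary has been entirely absorbed into the preceding two lemmas. The only minor verification is to double-check that the notion of cragged genuinely implies conical (which follows from the fact that the fibers $V_i$ in Definition \ref{def:cragged} are convex \emph{cones}), so that the additivity coming out of Lemma \ref{lem:criterion} matches the hypothesis demanded by Lemma \ref{lem:additive}. Once this is noted, the proof is a one-line composition.
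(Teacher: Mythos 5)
Your argument is exactly the intended one: the paper states this as an immediate corollary of Lemma \ref{lem:criterion} (cragged implies additive) followed by Lemma \ref{lem:additive} (additive conical Lagrangian implies tensor closure), and gives no separate proof. Your unwinding of the Kashiwara--Schapira bound and the remark on conicality are both correct and match the paper's reasoning.
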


Next, we prove that if $\Lambda$ is a cragged conical Lagrangian the tautological $t$-structure on $Sh(M)$ restricts to a $t$-structure on $Sh(M, \Lambda)$. 

\begin{lemma}
\label{lem:loc}
Let $i: U \hookrightarrow M$ be an open subset, and let 
$\mathcal{F}$ and $\mathcal{G}$ be two objects 
in $Sh(M)$. Then:
\begin{itemize}
\item $SS(i^*\mathcal{F}) = SS(\mathcal{F}) \cap T^*U$.
\item $H^j(i^*\mathcal{F}) \cong i^*(H^j(\mathcal{F}))$.
\end{itemize} 
\end{lemma}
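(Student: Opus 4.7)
The plan is to deduce both statements directly from the fact that the pullback along an open inclusion $i: U \hookrightarrow M$ coincides with sheaf-theoretic restriction $\cF\mapsto \cF|_U$, which is an exact functor on the abelian category of sheaves of $\bC$-vector spaces on $M$.

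For the second bullet, exactness of $i^*$ on sheaves implies that $i^*$ commutes with the formation of cohomology sheaves of complexes. Concretely, if one represents $\cF$ by a bounded complex and computes $H^j$ as the quotient $\ker(d^j)/\mathrm{im}(d^{j-1})$ in the category of sheaves, applying the exact functor $i^*$ term-by-term yields $H^j(i^*\cF) \cong i^*(H^j(\cF))$. No derived correction is needed precisely because $i$ is an open embedding.

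For the first bullet, I would argue both inclusions by appealing to the local nature of the definition of singular support. Fix $x_0 \in U$ and a point $p \in T^*_{x_0}M = T^*_{x_0}U$. The condition defining $p \notin SS(\cF)$ is that there exists an open neighborhood $V$ of $p$ in $T^*M$ such that for every $1$-differentiable function $\psi$ defined in a neighborhood $W$ of $x_0$ with $\psi(x_0)=0$ and $d\psi(x_0)\in V$, one has $(\Gamma_{\{\psi\geq 0\}}(\cF))_{x_0}\simeq 0$. Since $x_0 \in U$, we may always shrink $W$ so that $W\subset U$, and on such $W$ the sheaves $\cF$ and $i^*\cF$ agree literally. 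The functor $\Gamma_Z$ is defined locally in terms of the sheaf on a neighborhood of $Z\cap W$, and stalks at $x_0$ only depend on the sheaf on arbitrarily small neighborhoods of $x_0$. Hence $(\Gamma_{\{\psi\geq 0\}}(\cF))_{x_0} \simeq (\Gamma_{\{\psi\geq 0\}}(i^*\cF))_{x_0}$ for all such $\psi$, proving both $p\notin SS(\cF) \Leftrightarrow p\notin SS(i^*\cF)$, i.e.\ $SS(i^*\cF) = SS(\cF)\cap T^*U$.

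The proof is essentially a bookkeeping exercise: the only point requiring care is verifying that $\Gamma_Z$ and the stalk functor genuinely commute with restriction to an open subset, which is immediate from their definitions. I do not expect any substantive obstacle; both statements are standard consequences of the locality of singular support and the exactness of open restriction, and the lemma is invoked later to reduce questions about $SS$ to the local model of a globally cragged set on an open chart.
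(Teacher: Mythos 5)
Your proof is correct and follows exactly the same route as the paper: the paper's own argument invokes the locality of the definition of singular support for the first bullet and the exactness of $i^*$ for the second, and you have simply unwound those two points into explicit detail. Nothing in your write-up diverges from the intended argument.
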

\begin{proof}
As the definition of singular support is local in nature, the first claim  follows immediately. 
The second is a consequence of the fact that $i^*$ is exact. 
\end{proof}

\begin{lemma}
\label{lem:exercise}
Let $U$ be an open subset of a vector space $E$, and let $V$ be a closed convex cone in $E^*$. If $\mathcal{F} \in Sh(M)$ has the property that $SS(\mathcal{F})$ is contained in $U \times V$, then $SS(H^j(\mathcal{F})) \subset U \times V$ for all $j$.
\end{lemma}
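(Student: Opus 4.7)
My plan is to reduce the statement to the case where $V$ is a closed linear half-space in $E^*$, and then invoke the Kashiwara--Schapira description of sheaves whose singular support is controlled by a constant (i.e.\ non-varying) convex cone.

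By the bipolar theorem, any closed convex cone $V \subset E^*$ can be written as an intersection of closed linear half-spaces:
$$V = \bigcap_{v \in V^\circ} H_v, \quad \text{where } H_v := \{\xi \in E^* : \langle v, \xi \rangle \geq 0\},$$
and $V^\circ := \{v \in E : \langle v, \xi\rangle \geq 0 \text{ for all } \xi \in V\}$ is the polar cone of $V$. The hypothesis $\SS(\cF) \subset U \times V$ implies $\SS(\cF) \subset U \times H_v$ for every $v \in V^\circ$; if we can prove the conclusion $\SS(H^j(\cF)) \subset U \times H_v$ for each such $v$ separately, then intersecting over $v \in V^\circ$ yields $\SS(H^j(\cF)) \subset \bigcap_v (U \times H_v) = U \times V$, as desired. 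Thus we reduce to the case $V = H_v$ for a fixed $v \in E$.

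For the half-space case, set $\gamma := \bR_{\geq 0} v \subset E$, which is the polar of $H_v$. By a standard result of Kashiwara and Schapira (\cite{KS}, Proposition 5.2.3 and the surrounding discussion), the full subcategory of $\Sh(U)$ of complexes with singular support in $U \times H_v$ is identified with the essential image of the pullback functor $\phi^{-1}: \Sh(U_\gamma) \to \Sh(U)$. Here $U_\gamma$ denotes $U$ equipped with the coarser $\gamma$-topology, whose open sets are the $\gamma$-saturated subsets $W \subset U$ (those satisfying $W = (W + \gamma) \cap U$), and $\phi: U \to U_\gamma$ is the continuous identity on points. Consequently our $\cF$ is quasi-isomorphic to $\phi^{-1}\cG$ for some complex of sheaves $\cG$ on $U_\gamma$.

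The conclusion is now formal: sheaf pullback along a continuous map of topological spaces is exact on the abelian category of sheaves, hence $t$-exact for the tautological $t$-structure on complexes, and it commutes with the cohomology-sheaf functors, giving $H^j(\phi^{-1}\cG) \cong \phi^{-1}(H^j\cG)$. Therefore $H^j(\cF)$ again lies in the essential image of $\phi^{-1}$, and re-applying the same characterization gives $\SS(H^j(\cF)) \subset U \times H_v$, completing the argument. The main nontrivial ingredient is the identification of the $\phi^{-1}$-pullbacks with the sheaves of half-space singular support, which is the microlocal input from \cite{KS}; everything else is a short chain of formal manipulations.
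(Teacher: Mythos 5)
The paper's proof of this lemma is literally the one-line citation ``See \cite{KS}, Exercise VI.1,'' so there is no written argument to compare against; your proposal supplies the argument that the citation leaves implicit. Your strategy --- decompose the closed convex cone $V$ into an intersection of half-spaces via the bipolar theorem, identify sheaves with singular support in a fixed half-space with objects in the essential image of the pullback $\phi^{-1}$ from the $\gamma$-topology, and then conclude by exactness of $\phi^{-1}$ --- is sound, and is very likely how the exercise in \cite{KS} is meant to be solved. Two points deserve a bit more care before the argument is airtight. First, in the sign conventions of \cite{KS} the condition that Proposition 5.2.3 identifies with the essential image of $\phi_\gamma^{-1}$ is $SS(\cF)\subset E\times\gamma^{\circ a}$, the \emph{antipodal} of the polar cone $\gamma^\circ$, not $\gamma^\circ$ itself; since $(\bR_{\ge 0}v)^\circ=H_v$, you want $\gamma=\bR_{\ge 0}(-v)$ rather than $\bR_{\ge 0}v$ for the bookkeeping to match the reference (the content of the argument is unchanged, but this is the kind of sign that is easy to get wrong and worth pinning down). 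Second, Proposition 5.2.3 in \cite{KS} is stated on the whole vector space $E$ (or on a $\gamma$-open subset), whereas you apply it on an arbitrary open $U\subset E$. Since $SS$ is local and the inclusion $SS(H^j\cF)\subset U\times V$ is to be checked fiber by fiber, this is reparable --- for instance by working near each point and using the local/cutoff form of the propagation results in \cite{KS} Section 5.2 --- but a sentence justifying the passage to a general open $U$ should be added, as this is precisely the regime in which the lemma is invoked later in the proof of Proposition \ref{prop:cragged}.
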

\begin{proof}
See \cite{KS}, Exercise VI.1.
\end{proof}

\begin{proposition}
\label{prop:cragged}
Let $M$ be a manifold, and let $\Lambda \subset T^*M$ be a cragged conical lagrangian. Then the 
$t$-structure on $Sh(M)$ restricts to a $t$-structure on $Sh(M, \Lambda)$.
\end{proposition}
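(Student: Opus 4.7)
The plan is to show that for any $\cF \in \Sh_c(M, \Lambda)$ and any $j$, the cohomology sheaf $H^j(\cF)$ again has singular support contained in $\Lambda$; this is precisely what is needed for the tautological truncation functors $\tau^{\leq n}$, $\tau^{\geq n}$ to preserve the subcategory $\Sh_c(M, \Lambda)$, so that the restricted pair remains a $t$-structure. Since singular support is detected locally and craggedness is itself a local condition, I would reduce immediately to the case where $M = U$ is an open subset of a vector space $E$ and $\Lambda = \bigcup_{i \in I} Z_i \times V_i$ is globally cragged in the sense of Definition \ref{def:cragged}.

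The geometric heart of the argument is the monotonicity built into Lemma \ref{lem:containment}. Fix $z \in U$ and let $i_0$ be the unique index with $z \in Z_{i_0}$. By local finiteness of the stratification I would first choose a neighborhood $W_0$ of $z$ meeting only finitely many strata, then shrink to a smaller neighborhood $W$ which avoids all strata $Z_j$ for which $z \notin \overline{Z_j}$ (using that the complement of $\overline{Z_j}$ is open). For any remaining stratum $Z_j$ meeting $W$ we then have $Z_{i_0} \cap \overline{Z_j} \neq \varnothing$, so by Definition \ref{def:stratification} one has $i_0 \leq j$, and Lemma \ref{lem:containment} yields $V_j \subset V_{i_0}$. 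Consequently $\Lambda \cap T^*W$ is contained in the single product $W \times V_{i_0}$ with $V_{i_0}$ a closed convex cone.

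Given this local reduction, Lemma \ref{lem:exercise} applies directly: from $SS(\cF|_W) = SS(\cF) \cap T^*W \subset W \times V_{i_0}$ (by the first part of Lemma \ref{lem:loc}), I obtain $SS(H^j(\cF|_W)) \subset W \times V_{i_0}$. The second part of Lemma \ref{lem:loc} identifies $H^j(\cF|_W)$ with $(H^j\cF)|_W$, so the first part applied again gives $SS(H^j\cF) \cap T^*W \subset W \times V_{i_0}$. Evaluating at the point $z$ yields $SS(H^j\cF)_z \subset V_{i_0} = \Lambda_z$. Since $z$ was arbitrary and $SS(H^j\cF) = \bigcup_{z} SS(H^j\cF)_z$, I conclude $SS(H^j\cF) \subset \Lambda$, as required.

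I do not expect a serious obstacle here: every step is supplied either by the local structure of cragged Lagrangians (the combination of Lemmas \ref{lem:containment} and \ref{lem:loc}) or by the single-cone statement of Lemma \ref{lem:exercise}. The only slightly delicate point is the two-step shrinking producing $W$, which is needed because a priori a stratum $Z_j$ meeting a neighborhood of $z$ need not contain $z$ in its closure; this is arranged cleanly by local finiteness together with the openness of $U \setminus \overline{Z_j}$.
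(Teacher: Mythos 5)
Your proof is correct and follows essentially the same route as the paper's: localize to a globally cragged chart, find a neighborhood of each point $z$ on which all fibers of $\Lambda$ are contained in the single cone $V_{i_0}$ associated to the stratum of $z$, and then apply Lemma \ref{lem:loc} and Lemma \ref{lem:exercise} to bound $SS(H^j\cF)_z$. The paper simply asserts the existence of the neighborhood $U'$ with $i_p \leq i_q$ for all $q \in U'$ and uses Lemma \ref{lem:containment} implicitly, whereas you spell out the two-step shrinking argument and cite the lemma explicitly; this is a harmless and indeed welcome elaboration of the same proof.
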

\begin{proof}
We have to prove that for all objects $\mathcal{F}$ in $Sh(M, \Lambda)$, and for all $i \in \bZ$, $H^i(\mathcal{F})$ lies in $Sh(M, \Lambda)$, or equivalently that  
$SS(H^i(\mathcal{F})) \subset \Lambda$. By Lemma \ref{lem:loc} we can assume that $M$ is an open subset $U$ in a vector space $E$, and that $\Lambda$ is globally cragged. We use the notations of Definition \ref{def:cragged} (1).

If $p$ is a point in $U$ we denote $i_p \in I$ the index such that $p \in Z_{i_p}$. It is sufficient to show that for all $p$, the fiber $SS(H^ i(\mathcal{F}))_p$ is contained in $V_{i_p}$. 
We can consider an open neighborhood 
$U'$ of $p$ having the property that for all 
$q \in U'$, $i_p \leq i_q$. Let $i: U' \subset U$ be the inclusion.
By Lemma \ref{lem:loc} $SS(i^*\mathcal{F}) = SS(\mathcal{F}) \cap T^*U' \subset U' \times V_{i_p}$. Applying Lemma  \ref{lem:loc} and then Lemma \ref{lem:exercise} we obtain the inclusion 
$$ 
SS(H^j(\mathcal{F})) \cap T^*U' = SS(i^*H^j(\mathcal{F})) = 
SS(H^j(i^*\mathcal{F})) \subset U' \times V_{i_p},
$$
This implies in particular that $SS(H^j(\mathcal{F}))_p \subset V_{i_p}$.
\end{proof}

If $\cS$ is a stratification satisfying some extra conditions then $\Sh_c(M, \cS)$ can 
be described very explicitly as a category of quiver representations. 
We make use of the convenient recent reference \cite{B}, but remark that this circle of ideas goes back to McPherson and others:  for more information 
see for instance \cite{Ka}, and the discussion in the first Section of \cite{Tr1}.

Let us briefly summarize the results in \cite{B} that we will need in the sequel. 
Let $\cS$ be an \emph{acyclic stratification} of $M$ in the sense of \cite{B} Definition 17. For our purposes it is sufficient to remark that any triangulation of $M$ yields such a stratification, and that any Whitney stratification admits a refinement that is acyclic. Section 4 of \cite{B} contains a prescription that attaches to $\cS$ a directed quiver with relations $(Q_{\cS}, R_\cS)$. 
Let $A_\cS$ be the path algebra of 
$(Q_\cS, R_\cS)$, and let $D_{dg}^b(\mod A_\cS)$ be the triangulated dg category of finite $A_\cS$-modules.

\begin{remark}
Note that as the quiver $Q_\cS$ is directed, $A_\cS$ has finite global dimension. 
\end{remark}

\begin{theorem}[\cite{B} Theorem 21]
\label{theorem: quiver}
There is a quasi-equivalence $\Phi: \Sh_c(M, \cS) \cong D_{dg}^b( \mod A_\cS)$. 
Also, $\Phi$ preserves the tautological $t$-structures. 
\end{theorem}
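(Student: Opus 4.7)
The plan is to build the equivalence $\Phi$ explicitly and then identify the t-structures on the two sides. The main idea, going back to MacPherson, is that a constructible sheaf on a nicely stratified space is determined by a representation of the exit-path category, and for an acyclic stratification this category has a combinatorial presentation given by the quiver $(Q_\cS, R_\cS)$. So the first step is to fix, for each stratum $S \in \cS$, a choice of distinguished point $x_S \in S$ and a contractible ``star neighborhood'' $\Star(S) \supset S$, chosen so that whenever $S \leq T$ we have $x_S \in \overline{\Star(T)}$ and the inclusion realizes the generization map. Acyclicity of $\cS$ is used precisely to ensure these neighborhoods have trivial cohomology in the relevant sense, so that sections are controlled by their behavior at the $x_S$.

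Next I would define $\Phi$ on objects by sending a constructible sheaf $\cF$ to the $A_\cS$-module whose underlying graded vector space is $\bigoplus_{S \in \cS} \cF_{x_S}$, with the action of each arrow $S \to T$ of $Q_\cS$ (covering relation $S \leq T$) given by the specialization map coming from the inclusion $\{x_S\} \hookrightarrow \overline{\Star(T)}$; the relations in $R_\cS$ come from commutativity of these specialization maps along chains of inclusions and are automatic. To extend $\Phi$ to a dg functor, one promotes this construction via the Godement-type resolution adapted to $\cS$: each complex $\cF \in \Sh_c(M,\cS)$ has a canonical resolution by sheaves of the form $(j_S)_! \underline{V_S}_S$ or $(j_S)_* \underline{V_S}_S$, and $\Phi$ sends these to the standard projective or injective $A_\cS$-modules at vertex $S$. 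One then checks that $\Phi$ is compatible with Hom-complexes by matching the two sides locally on $\Star(T)$.

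To show $\Phi$ is a quasi-equivalence I would produce a quasi-inverse $\Psi$: given an $A_\cS$-module $V = (V_S, \varphi_\alpha)$, glue locally constant sheaves $\underline{V_S}$ on each stratum using the structure maps $\varphi_\alpha$ as the attaching data, which is possible because the relations $R_\cS$ are exactly the cocycle conditions for this gluing. One then verifies $\Phi \circ \Psi \simeq \mathrm{id}$ on the standard generators (simple modules at each vertex, whose image under $\Psi$ is $(j_S)_! \underline{\bC}_S$ up to shift) and appeals to the fact that these generate both triangulated dg categories. Compatibility with t-structures is then a formal consequence: by construction, a sheaf concentrated in degree $0$ (i.e., an honest constructible sheaf) is sent to a module concentrated in degree $0$, since the stalks $\cF_{x_S}$ are then ordinary vector spaces and the specialization maps are ordinary linear maps; conversely $\Psi$ of a module in degree $0$ is a complex whose cohomology vanishes outside degree $0$.

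The main obstacle is the verification that $\Phi$ is fully faithful at the dg level, i.e., that $\mathrm{hom}_{\Sh_c(M,\cS)}(\cF, \cG) \simeq \mathrm{hom}_{A_\cS}(\Phi \cF, \Phi \cG)$ as complexes rather than merely on $H^0$. This requires a careful \v{C}ech-type computation comparing the standard injective resolution of $\cG$ by costalks along strata with the bar resolution of $\Phi \cG$ as an $A_\cS$-module, and it is here that the hypothesis that $Q_\cS$ is directed (so $A_\cS$ has finite global dimension) and that each $\Star(S)$ is acyclic are both used essentially. Once this is in place, the rest of the theorem follows by standard triangulated-category arguments.
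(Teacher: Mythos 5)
The paper does not actually prove this theorem: it is cited verbatim from Balthazar \cite{B}, Theorem 21, so there is no internal argument in the present paper to compare your attempt against. What you have written is a reasonable summary of the MacPherson--Kapranov ``exit-path'' strategy (stalks at base points, specialization maps along the closure order, gluing of representations to recover sheaves), which is indeed the circle of ideas the paper itself references via \cite{Ka} and \cite{Tr1} and which underlies \cite{B}'s combinatorial construction $(Q_\cS, R_\cS)$. In that sense your approach is not a departure from the intended one.

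That said, two points in your sketch are weaker than they should be. First, the step you flag as the ``main obstacle''---promoting the stalk-and-specialization assignment to an actual dg (or $A_\infty$) functor and verifying quasi-full-faithfulness on hom-complexes---is genuinely the entire content of the theorem, and your appeal to a ``Godement-type resolution adapted to $\cS$'' plus a ``\v{C}ech-type computation'' is a placeholder, not an argument; a careless reading would suggest the functor is defined strictly on the nose, which it is not. Second, you do not pin down the relationship between the arrows of $Q_\cS$, the star neighborhoods $\Star(S)$, and the direction of the specialization maps $\cF_{x_S}\to\cF_{x_T}$, which is exactly where acyclicity of the stratification enters to guarantee that sections over $\Star(S)$ compute the stalk and that the relations $R_\cS$ are the right ones. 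These are fixable gaps, but as written the proposal establishes the expected shape of the argument rather than the argument itself, which is consistent with the fact that the paper outsources the proof to \cite{B}.
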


\section{The coherent-constructible correspondence}
\label{sec:main}
We start by reviewing the statement of the 
equivariant coherent-constructible (CC) correspondence 
of \cite{FLTZ1, FLTZ2}. Next we extend to toric orbifolds partial 
results of Treumann \cite{T} on the non-equivariant CC-correspondence. 
Building on these results in the next Section we prove one of our main theorems: 
we establish a complete statement of the 
non-equivariant CC-correspondence  for a class of toric orbifolds satisfying 
some special combinatorial conditions. 

We use the notations introduced in Section \ref{sec:toric}. 
Let $\bSi = (N, \Sigma, \beta)$ be a stacky fan. Henceforth, we assume that all stacky fans $\bSi$ are complete: that is, that their underlying ordinary fan $\Sigma$ is complete.
Recall that we write $M = Hom(N, \bZ)$, $M_\bR = M \otimes_\bZ \bR$, and that $\beta$ is 
a homorphism of the form 
$\beta: \bZ^r = \oplus_{i=1}^{i=r}\bZ e_i \rightarrow N$. We set $b_i = \beta(e_i)$ 
and $B = \{b_1, \dots, b_r\}$.
For all $\sigma \in \Sigma$, we denote $N_{\sigma} \subset N$ the sublattice generated by $B \cap \sigma = \{w_1, \dots, w_d\}$, and we denote $M_{\sigma}$ the dual lattice $M_\sigma = Hom(N_{\sigma}, \bZ)$. If $\sigma \in \Sigma$, and $\chi \in M_\sigma$, we make the following definitions:
$$
\sigma^{\bot}_{\chi} := \{x \in M_{\bR} | \langle x, w_i \rangle = \langle \chi, w_i \rangle_{\sigma} \}, \text{     }
\sigma^\vee_\chi := \{x \in M_{\bR} | \langle x, w_i \rangle \geq \langle \chi, w_i \rangle_{\sigma} \},
$$
where $\langle \bullet , \bullet \rangle$ and $\langle \bullet , \bullet \rangle_{\sigma}$ are the natural pairings.
Following \cite{FLTZ2} we define the conical Lagrangian subset $\tilde{\Lambda}_\bSi \subset 
 M_\bR \times N_\bR = T^*M_\bR$ by the formula: 
$$
\tilde{\Lambda}_{\bSi} = \bigcup_{ \tau \in \Sigma} \bigcup_{ \chi \in M_{\tau}} \tau^{\bot}_{\chi} \times -\tau \subset M_{\bR} \times N_{\bR} = T^*M_{\bR}.
$$

Let $\Sh_{cc}(M, \tilde{\Lambda}_\bSi) \subset \Sh_{c}(M, \tilde{\Lambda}_\bSi)$ 
be the subcategory of sheaves with compact support. Denote $T = N \otimes_\bZ \bC^*$ 
the torus acting on $\cX$.
In \cite{FLTZ1} and \cite{FLTZ2} Fang, Liu, Treumann and Zaslow 
prove the following theorem:
\begin{theorem}[Equivariant CC-correspondence, \cite{FLTZ2} Theorem 7.6]
\label{thrm:eq-ccc}
There is a quasi-equivalence of dg categories
$
\tilde{\kappa}: \Perf_{T}(\cX_{\bSi}) \rightarrow \Sh_{cc}(M, \tilde{\Lambda}_{\bSi}).\footnote{In fact the theorem proved in \cite{FLTZ2} is more general as it applies as well to toric DM stacks with non-trivial generic stabilizers.}
$
\end{theorem}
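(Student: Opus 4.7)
The plan is to build $\tilde\kappa$ first on a set of generators and then extend by the universal property of triangulated dg categories, following the template of Fang--Liu--Treumann--Zaslow. By Corollary \ref{cor:gen}, $\Perf_T(\cX_\bSi)$ is generated as a triangulated dg category by equivariant line bundles, and these are parametrized by equivariant Weil divisors $D = \sum_{i=1}^r a_i D_i$, with $D_i$ the torus-invariant divisor attached to the ray $\rho_i$. On generators I would define $\tilde\kappa$ by sending $\cO_\bSi(D)$ to an explicit constructible sheaf $\cL(D) \in \Sh_{cc}(M_\bR,\tilde\Lambda_\bSi)$ built from the piecewise $\bZ$-linear function $\varphi_D$ determined by $D$. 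Concretely, for each maximal cone $\sigma \in \Sigma$ one reads off a local character $\chi_\sigma \in M_\sigma$, and one glues the standard sheaves $j_{\sigma,!}\bC_{\sigma^\vee_{\chi_\sigma}}$ along common faces using compatibility of these local characters. A direct inspection of boundary walls shows $SS(\cL(D)) \subset \tilde\Lambda_\bSi$: each wall of $\sigma^\vee_\chi$ lies in some $\tau^\bot_\chi$ for a face $\tau$ of $\sigma$, with outward conormal in $-\tau$.

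Next I would verify full faithfulness on these generators. Both hom-complexes $\Hom^\bullet_T(\cO(D),\cO(D'))$ and $\Hom^\bullet(\cL(D),\cL(D'))$ decompose over $M$ as direct sums of character summands---on the algebraic side via the torus action, on the sheaf side via translation---so one matches them character by character. The $\chi$-summand on the algebraic side is given by a standard \v{C}ech-type complex on the stacky fan and vanishes outside a single degree determined by the combinatorics of $D'-D-\chi$; the $\chi$-summand on the sheaf side computes compactly supported cohomology of an intersection of convex polyhedral regions of the form $\sigma^\vee_{\chi_\sigma} \cap (\sigma')^\vee_{\chi'_{\sigma'}+\chi}$. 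Both produce $\bC$ in the same degree when the region is nonempty and $0$ otherwise, yielding the required quasi-isomorphism.

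For essential surjectivity, every $\cF \in \Sh_{cc}(M_\bR,\tilde\Lambda_\bSi)$ is constructible with respect to the hyperplane stratification cut out by the walls $\tau^\bot_\chi$, and a Cousin/Postnikov filtration with respect to this stratification realizes $\cF$ as a left convolution of standard sheaves $j_!\bC$ on closed convex polyhedral pieces, each of which lies in the image of $\tilde\kappa$ by construction. Combined with full faithfulness and dg idempotent-completeness, this yields the quasi-equivalence.

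The main obstacle is tracking the stacky characters correctly: when the $b_i$ are non-primitive, $M_\sigma$ strictly refines $M$ along $\sigma$, and the Lagrangian $\tilde\Lambda_\bSi$ sees characters finer than those of the global torus. One must verify that the additional equivariant data carried by line bundles over the local stacky charts is faithfully encoded in the finer stratification and its orthogonality structure, and that the face gluings lift coherently to the dg level---upgrading the equivalence from the homotopy-category level to a quasi-equivalence of dg enhancements is the most technically delicate part of the argument.
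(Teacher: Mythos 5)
The theorem is cited verbatim from Fang--Liu--Treumann--Zaslow \cite{FLTZ2}, Theorem 7.6; the paper does not prove it, so there is no internal proof to compare against. The paper does, however, recall (just below the statement of Theorem \ref{thm: cohcon}) how $\tilde\kappa$ is built in \cite{FLTZ2}: it is defined on the generators $\Theta_T'(\sigma,\chi) = (\cX_\sigma \hookrightarrow \cX_\bSi)_* \cO_{\cX_\sigma}(\chi)$, sending them to the costandard sheaves $\Theta_T(\sigma,\chi) = ((\sigma^\vee_\chi)^\circ \hookrightarrow M_\bR)_!\,\ori[\dim M_\bR]$, with the $\Hom$-complexes on both sides matching ($\bC[\tau^\vee_{\chi_2-\chi_1}\cap M_\tau]$ in degree zero, $0$ otherwise).

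Your sketch follows the same broad template but with a different choice of atoms: you start directly from equivariant line bundles $\cO(D)$ rather than from the $\Theta'$-sheaves. In \cite{FLTZ2} the $\Theta'$-sheaves are the primitives, line bundles are realized as \v{C}ech-style convolutions (Postnikov systems) of them, and the image of $\cO(D)$ is the resulting ``twisted polytope sheaf,'' not a literal union of $j_{\sigma,!}\bC_{\sigma^\vee_{\chi_\sigma}}$ glued along common faces. The phrase ``one glues the standard sheaves along common faces'' has to be replaced by an explicit homotopy-colimit/Postnikov diagram; for non-ample $D$ the result is not a sheaf concentrated in one degree, and specifying the gluing coherences is exactly where the work lies. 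So step 2 of your plan is considerably more delicate than stated.

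The largest gap is essential surjectivity. You invoke a Cousin filtration with respect to ``the hyperplane stratification cut out by the walls $\tau^\bot_\chi$,'' but $\chi$ ranges over the infinite set $M_\tau$, so this is not a locally finite stratification a priori; even after observing that a compactly supported object only sees finitely many walls, one must still show that the standard pieces in such a resolution lie in the essential image of $\tilde\kappa$, rather than merely in $\Sh_{cc}(M_\bR,\tilde\Lambda_\bSi)$ --- and that these two agree is precisely what one is trying to prove, not a free input. The surjectivity step in \cite{FLTZ1, FLTZ2} is carried out by a genuinely different mechanism (the categorical Morelli theorem and microlocal/Morse-theoretic generation arguments), so as written your surjectivity paragraph is a plan, not an argument. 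Your closing remarks about the finer stacky character lattices $M_\sigma$ and the lift from homotopy categories to dg enhancements correctly flag real subtleties, but they are acknowledged rather than resolved.
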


The purpose of this Section is to extend work of Treumann \cite{T} on the non-equivariant CC-correspondence for toric varieties to the larger class of toric orbifolds. For the 
convenience of the reader we give complete proofs of all the statements. We do not claim any originality here as we will follow very closely the arguments in \cite{T}.  
Let $\bT$ be the torus obtained as the quotient $M_\bR / M$ and let $p$ be the quotient map, $p: M_\bR \to \bT$. Note that $\tilde{\Lambda}_\bSi$ is invariant under translation 
by elements in the integral lattice $M$. Denote $\Lambda_\bSi$ its quotient under the $M$-action: $\Lambda_\bSi$ is a conical Lagrangian subset of $T^* \bT$.
We denote $\forg$ both the forgetful  functor  
$\cQ Coh_T(\cX_{\bSi}) \to \cQ Coh(\cX_{\bSi})$,  
and its restriction to perfect complexes $\Perf_T(\cX_\bSi) \rightarrow \Perf(\cX_\bSi)$.
The following holds: 
\begin{theorem}
\label{thm: cohcon}
There is a quasi fully-faithful functor of triangulated dg categories 
$$
\kappa: \Perf(\cX_{\bSi}) \to \Sh_c(\bT, \Lambda_{\bSi}),
$$ 
which makes the following diagram commute:
$$
\xymatrix{
\Perf_T(\cX_{\bSi}) \ar[r]^{\tilde \kappa}\ar[d]_{\forg} & \Sh_{cc}(M_\bR, \tilde{\Lambda}_{\bSi}) \ar[d]^{p_!} \\
\Perf(\cX_{\bSi}) \ar[r]_\kappa & \Sh_c(\bT, \Lambda_{\bSi}).
}
$$
\end{theorem}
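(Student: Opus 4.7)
The plan is to construct $\kappa$ starting from the subcategory of equivariantizable perfect complexes and then extending to all of $\Perf(\cX_\bSi)$ by triangulated closure. By Corollary \ref{cor:gen}, the class $E_T$ of objects admitting a $T$-equivariant structure generates $\Perf(\cX_\bSi)$, so for $\cF \in E_T$ pick any equivariant lift $\cF^T$ and set
$$
\kappa(\cF) := p_! \, \tilde\kappa(\cF^T).
$$
Because $\tilde\Lambda_\bSi$ is $M$-invariant and $p$ is a local diffeomorphism, $p_!$ carries $\Sh_{cc}(M_\bR,\tilde\Lambda_\bSi)$ into $\Sh_c(\bT,\Lambda_\bSi)$. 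To see this is well-defined, note that any two equivariant lifts of $\cF$ differ by tensoring with some character $\chi \in M$; under $\tilde\kappa$ this corresponds to translation of the constructible sheaf on $M_\bR$ by the lattice vector $\chi$, which is identified away by $p_!$. An analogous argument on morphism complexes promotes $\kappa$ to a dg functor on the full dg subcategory spanned by $E_T$. Commutativity of the square is tautological: for any $\cF^T\in\Perf_T(\cX_\bSi)$, using the given equivariant structure as the chosen lift of $\forg(\cF^T)$ gives $\kappa\circ\forg = p_!\circ\tilde\kappa$.

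Next I would extend $\kappa$ to all of $\Perf(\cX_\bSi)$. By Corollary \ref{cor:gen} every object is a left convolution of a Postnikov system of objects in $E_T$, and since both $\tilde\kappa$ and $p_!$ respect cofiber sequences, $\kappa$ extends uniquely to a dg functor $\Perf(\cX_\bSi)\to\Sh_c(\bT,\Lambda_\bSi)$, landing in the desired subcategory because the singular support condition is preserved by cones (Theorem \ref{thrm:ss-prop} (4)).

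The heart of the argument is fully faithfulness. For $\cF_1,\cF_2\in E_T$ with chosen lifts $\cF_i^T$, one has the standard identity
$$
\Hom_{\Perf(\cX_\bSi)}(\cF_1,\cF_2)\;\cong\;\bigoplus_{\chi\in M}\Hom_{\Perf_T(\cX_\bSi)}\!\bigl(\cF_1^T,\,\cF_2^T\otimes\chi\bigr),
$$
arising because the forgetful functor admits a right adjoint given by summation over characters. On the constructible side, using that $p\colon M_\bR\to\bT$ is an étale cover with deck group $M$, so $p^!=p^*$ and $p^*p_!\cong \bigoplus_{m\in M} t_m^*$ (with $t_m$ translation by $m$), one computes by adjunction
$$
\Hom_{\Sh_c(\bT,\Lambda_\bSi)}\!\bigl(\kappa(\cF_1),\kappa(\cF_2)\bigr) \;\cong\; \Hom\!\Bigl(\tilde\kappa(\cF_1^T),\;\bigoplus_{m\in M} t_m^*\tilde\kappa(\cF_2^T)\Bigr).
$$
Under the equivariant CC correspondence of Theorem \ref{thrm:eq-ccc}, translation by $m\in M$ on $M_\bR$ intertwines with tensoring by the character $m$, so the two direct sums are matched term by term by $\tilde\kappa$. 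Since $\tilde\kappa$ is a quasi-equivalence each summand agrees, establishing fully faithfulness on $E_T$; and a fully faithful dg functor on a generating set extends to a fully faithful functor on the triangulated closure, giving the result on all of $\Perf(\cX_\bSi)$.

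The main obstacle I expect is ensuring that the two ``sum over characters'' decompositions really are intertwined by $\tilde\kappa$ in a coherent, functorial way — i.e., verifying that the isomorphism $p^*p_!\cong \bigoplus_m t_m^*$ on $\Sh_{cc}(M_\bR,\tilde\Lambda_\bSi)$ corresponds under $\tilde\kappa$ to the character-summation induction functor on the equivariant side, and that the compactness hypothesis built into $\Sh_{cc}$ allows these infinite sums to appear on both sides in the same way. A secondary technical issue is checking that passage to Postnikov systems preserves fully faithfulness, which requires the vanishing of higher $\Ext$ obstructions between equivariantizable objects and their cones, but this follows from the general principle that fully faithfulness on a generating subcategory passes to its triangulated closure once five-lemma arguments are run in both source and target.
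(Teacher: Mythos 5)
Your overall strategy matches the paper's: define $\kappa$ on equivariantizable generators by $p_! \circ \tilde\kappa$ (so the square commutes by construction), and verify full faithfulness by the adjunction $p_!\dashv p^!$ together with the decomposition of $p^! p_!$ (equivalently $p^*p_!$) as a sum of lattice translations, matched under $\tilde\kappa$ with the character–summation decomposition of the forgetful/induction adjunction on the coherent side. That computational core is the same as the paper's Proposition \ref{prop:fully faithful}.

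However, there is a real gap in your construction of $\kappa$. You propose to define $\kappa$ on all of $E_T$ by choosing, for each $\cF\in E_T$, an arbitrary equivariant lift $\cF^T$, and you justify well-definedness by the claim that any two equivariant lifts of $\cF$ differ by a character twist. That claim is correct for a single line bundle but false for a general object of $E_T$ (already for a direct sum of two line bundles the summands can be twisted independently, and for a general perfect complex the discrepancy between two equivariant structures is not governed by a single $\chi\in M$). Moreover, even if well-definedness held objectwise, choosing lifts one object at a time does not produce a dg functor: you would need a coherent system of lifts of objects, morphisms, and higher compositions, and you cannot manufacture that by "an analogous argument on morphism complexes." The subsequent step, extending "uniquely to a dg functor" via Postnikov systems, has the same problem — dg functors do not extend from a generating class along cones unless the functor is already defined on an ambient category and merely restricted.

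The paper sidesteps both issues by never trying to choose lifts: it defines $\kappa$ canonically on the FLTZ generators by $\Theta'(\sigma,\chi)\mapsto\Theta(\sigma,\chi)$, computes the $\Ext$-groups on both sides explicitly (Lemma \ref{lemma: cones} plus the equivariant computation from \cite{FLTZ2}), concludes that $\kappa\colon\langle\Theta'\rangle\to\langle\Theta\rangle$ is a quasi-equivalence between these ambient subcategories of $\cQ Coh(\cX_\bSi)$ and $\Sh_{qc}(\bT)$, proves $\Perf(\cX_\bSi)\subset\langle\Theta'\rangle$ using the equivariant CC correspondence and Corollary \ref{cor:gen}, and finally restricts, checking that line bundles land in $\Sh_c(\bT,\Lambda_\bSi)$ so that the triangulated image of $\Perf(\cX_\bSi)$ does too. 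You would reach the same statement by adopting this device: work with the canonical $\Theta$ objects rather than a non-canonical choice of lifts, and obtain $\kappa$ as a restriction of an already-defined functor rather than by ad hoc extension.
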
 

We prove Theorem \ref{thm: cohcon} by constructing $\kappa$ explicitly on a set of generators. Let us review briefly the main steps of the Theorem \ref{thrm:eq-ccc}. In \cite{FLTZ2} the authors attach to all pairs $(\sigma, \chi)$,  
$\sigma \in \Sigma$ and $\chi \in M_\sigma$, a quasi-coherent sheaf $\Theta_{T}'(\sigma,\chi) \in \cQ Coh_T(\cX_{\bSi})$ and a constructible sheaf $\Theta_{T}(\sigma,\chi) \in \Sh_c(M_\bR)$ as follows: 
$$\begin{array}{ccc}
\Theta_{T}(\sigma,\chi) & := & ((\sigma_\chi^\vee)^\circ \hookrightarrow M_\bR)_! \ori[\dim(M_\bR)], \\
\Theta_{T}'(\sigma,\chi) & := & (\cX_\sigma \hookrightarrow \cX_\bSi)_* \cO_{\cX_\sigma}(\chi), 
\end{array}
$$ 
where $\cX_\sigma \subset \cX_\bSi$ is the open substack associated to the cone 
$\sigma$, and $(\sigma^\vee_\chi)^\circ \subset M_\bR$ denotes the interior of $\sigma^\vee_\chi$. 
The functor $\tilde \kappa$ is first defined as a functor between the larger categories 
$\cQ Coh(\cX_\bSi) \rightarrow \Sh_c(M_\bR, \tilde{\Lambda}_\bSi)$:
$\tilde{\kappa}$ sends $\Theta_{T}'(\sigma,\chi)$ to $ \Theta_{T}(\sigma,\chi)$ and this assignement determines $\tilde \kappa$ uniquely. The key fact is that $\tilde{\kappa}$ 
restricts to the subcategories $\Perf(\cX_\bSi)$ and $\Sh_{cc}(M_\bR, \tilde{\Lambda}_\bSi)$, and defines a quasi-equivalence between them.

In the non-equivariant setting we consider the sheaves 
$$
\begin{array}{ccc}
\Theta(\sigma, \chi) & := & p_! \Theta_{T}(\sigma,\chi), \\
\Theta'(\sigma, \chi) & := & \forg (\Theta_{T}(\sigma,\chi)).
\end{array}
$$
We start showing that the prescrition $\kappa (\Theta'(\sigma, \chi)) =\Theta(\sigma, \chi)$ defines a quasi 
fully-faithful embedding of dg categories $\kappa: \cQ Coh (\cX_\bSi) \rightarrow \Sh_{qc}(\bT, \Lambda_\bSi)$.
Note that we have a natural action of $M$ on $M_\sigma$: with slight abuse of notation, if $m \in M$ and $\chi \in M_\sigma$ we write $m + \chi$ for $(m)|_{N_\sigma} + \chi$. More generally, let $\tau$ and $\sigma$ be two cones such that $\tau \subset \sigma$, and thus also $\sigma^\vee \subset \tau^\vee $. We have a canonical map $M_{\sigma} \to M_{\tau} $ given by restriction to $N_\tau$. As before, if $\chi_1 \in M_\sigma$ and $\chi_2 \in M_\tau$ we simply write $\chi_1 + \chi_2$ instead of $(\chi_1)|_{N_\tau} + \chi_2$. 
We remark that the sheaves $\Theta(\sigma, \chi)$ and $\Theta'(\sigma, \chi)$ depend on $\chi$ only up to the action of $M$.

\begin{lemma}
\label{lemma: cones}
Let $\sigma$ and $\tau$ be cones in $\Sigma$. There is a $\xi \in M$ such that $\sigma_{\chi_1}^\vee \subset \tau_{\chi_2 +\xi}^\vee $ if and only if 
$\tau \subset \sigma$ and $ \xi \in \tau^\vee_{\chi_2-\chi_1} $. 
\end{lemma}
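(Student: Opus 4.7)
The plan is to reduce the containment $\sigma^\vee_{\chi_1} \subset \tau^\vee_{\chi_2+\xi}$ to a statement about translates of the standard dual cones $\sigma^\vee$ and $\tau^\vee$. Since $\Sigma$ is simplicial, the vectors $B \cap \sigma = \{w_1, \dots, w_d\}$ are linearly independent in $N_\bR$, so one may pick $x_1 \in M_\bR$ with $\langle x_1, w_i \rangle = \langle \chi_1, w_i \rangle_\sigma$ for each $i$; then $\sigma^\vee_{\chi_1} = x_1 + \sigma^\vee$, and similarly $\tau^\vee_{\chi_2 + \xi} = x_2 + \tau^\vee$ for an appropriate choice of $x_2$.

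For the forward direction, I would take recession cones on both sides of the containment. The recession cone of a translate $v + C$ of a closed convex cone $C$ is $C$ itself, so this immediately gives $\sigma^\vee \subset \tau^\vee$. Since $\Sigma$ is simplicial both $\sigma$ and $\tau$ are strictly convex, so $\sigma^{\vee\vee} = \sigma$ and $\tau^{\vee\vee} = \tau$; dualizing the inclusion $\sigma^\vee \subset \tau^\vee$ yields $\tau \subset \sigma$, establishing necessity of this condition.

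Assuming now $\tau \subset \sigma$, and hence $\sigma^\vee \subset \tau^\vee$, the containment $(x_1 + \sigma^\vee) \subset (x_2 + \tau^\vee)$ collapses to the single translation condition $x_1 - x_2 \in \tau^\vee$: one direction follows by specializing to $0 \in \sigma^\vee$, and the other from the fact that $\tau^\vee$ is a cone which absorbs $\sigma^\vee$ under addition. Unwinding the defining inequalities for $x_1$, $x_2$ and $\tau^\vee$, the condition $x_1 - x_2 \in \tau^\vee$ becomes an explicit system of inequalities on $\langle \xi, w_i \rangle$ for $w_i \in B \cap \tau$, which after restriction of $\chi_1$ along the map $M_\sigma \to M_\tau$ induced by $\tau \subset \sigma$ is precisely the defining condition of $\tau^\vee_{\chi_2 - \chi_1}$.

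The main obstacle is purely bookkeeping: one must track how the three different lattices $M_\sigma$, $M_\tau$ and $M$ interact, remembering that $\chi_1$ is implicitly restricted to $N_\tau$ whenever it is combined with elements of $M_\tau$, and that the convention $m + \chi$ introduced just before the lemma encodes exactly this restriction. Once these identifications and the appropriate sign conventions are fixed, the biconditional reduces to a direct comparison of linear inequalities evaluated on $B \cap \tau$.
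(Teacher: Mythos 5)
Your argument is essentially the paper's: both realize $\sigma^\vee_{\chi_1}$ and $\tau^\vee_{\chi_2+\xi}$ as translates of the cones $\sigma^\vee$ and $\tau^\vee$, deduce $\sigma^\vee \subset \tau^\vee$ (hence $\tau \subset \sigma$ by biduality), and then observe that because $\tau^\vee$ is a convex cone containing $\sigma^\vee$ the remaining content of the inclusion is exactly $x_1 - x_2 \in \tau^\vee$, which unwinds to the condition $\xi \in \tau^\vee_{\chi_2-\chi_1}$. The only cosmetic differences are that you phrase the first reduction via recession cones and you invoke strict convexity of the simplicial cones for $\sigma^{\vee\vee}=\sigma$, whereas biduality holds for any closed convex cone; neither affects the argument.
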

\begin{proof}
Let us assume first that $\tau \subset \sigma$. The condition $\xi \in  \tau^\vee_{\chi_2-\chi_1}$ implies that $\xi- \chi_1+\chi_2 \in \tau^{\vee} $ and hence $ \sigma^{\vee}  -\chi_1 +\chi_2+ \xi \subset \tau^{\vee}$.  
That is, 
$\sigma^{\vee}_{\chi_1} \subset \tau^{\vee}_{\chi_2 + \xi} $. 
Conversely, if $\sigma_{\chi_1}^\vee \subset \tau_{\chi_2 +\xi}^\vee $, then $\sigma^\vee -{\chi_1}+\chi_2 +\xi  \subset \tau^\vee $. It follows that 
$\sigma^{\vee} \subset \tau^{\vee}$, hence $\tau \subset \sigma$ and $-{\chi_1}+\chi_2 +\xi \in \tau^\vee $. The second condition is equivalent to the fact that 
$\xi \in  \tau^\vee_{\chi_2-\chi_1} $. 
\end{proof}

\begin{proposition} 
\label{prop:fully faithful}
We have 
$$
\Ext^i(\Theta(\sigma, \chi_1),\Theta(\tau, \chi_2)) \cong \bigg\{
\begin{array}{ll}
\sfR[\tau^\vee_{\chi_2-\chi_1} \cap M] & \text{if $i = 0$ and $\tau \subset \sigma$}, \\
0 & \text{otherwise}
\end{array}
$$
and
$$
\Ext^i(\Theta'(\sigma, \chi_1),\Theta'(\tau, \chi_2)) \cong \bigg\{
\begin{array}{ll}
\sfR[\tau^\vee_{\chi_2-\chi_1} \cap M] & \text{if $i = 0$ and $\tau \subset \sigma$}, \\
0 & \text{otherwise}
\end{array}
$$
\end{proposition}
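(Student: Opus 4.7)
The plan is to compute both Ext groups by a parallel strategy: reduce each to an $M$-indexed direct sum of equivariant Ext groups (via base change on the topological side, via weight decomposition on the coherent side), evaluate the summands using Theorem \ref{thrm:eq-ccc}, and identify the indexing set via Lemma \ref{lemma: cones}.

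For the constructible Ext, I would start from the observation that $p:M_\bR\to\bT$ is an \'etale covering with deck group $M$, so $p^!\cong p^*$. The $(p_!,p^!)$ adjunction, combined with base change along the Cartesian square $M_\bR\times_\bT M_\bR\cong M_\bR\times M$, gives
\[
\Hom_\bT(\Theta(\sigma,\chi_1),\Theta(\tau,\chi_2))\cong \Hom_{M_\bR}\Bigl(\Theta_T(\sigma,\chi_1),\ \bigoplus_{m\in M}T_m^*\Theta_T(\tau,\chi_2)\Bigr),
\]
and a direct computation identifies $T_m^*\Theta_T(\tau,\chi_2)\cong\Theta_T(\tau,\chi_2-m)$. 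Using the adjunction $j_!\dashv j^*$ for $j:(\sigma^\vee_{\chi_1})^\circ\hookrightarrow M_\bR$, the right-hand side reduces to derived global sections on the contractible convex open cone $(\sigma^\vee_{\chi_1})^\circ$; these commute with direct sums of constructible sheaves, producing (after relabelling $m\mapsto -m$)
\[
\bigoplus_{m\in M}\Hom_{M_\bR}(\Theta_T(\sigma,\chi_1),\Theta_T(\tau,\chi_2+m)).
\]
Each summand is $\sfR$ concentrated in degree zero exactly when $\sigma^\vee_{\chi_1}\subset\tau^\vee_{\chi_2+m}$, and vanishes otherwise; this is part of the standard computation underlying Theorem \ref{thrm:eq-ccc}. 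Lemma \ref{lemma: cones} rewrites the containment as $\tau\subset\sigma$ and $m\in\tau^\vee_{\chi_2-\chi_1}\cap M$, yielding the stated formula.

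For the coherent Ext the strategy is analogous, replacing base change by the weight decomposition of the forgetful functor. For any pair of $T$-equivariant quasi-coherent sheaves $\cF,\cG$ on $\cX_\bSi$,
\[
\Hom_{\cQ Coh}(\forg\cF,\forg\cG)\cong\bigoplus_{m\in M}\Hom_{\cQ Coh_T}(\cF,\cG(m)),
\]
where $\cG(m)$ denotes the twist of $\cG$ by the character $m\in M$. Applied to $\cF=\Theta'_T(\sigma,\chi_1)$, $\cG=\Theta'_T(\tau,\chi_2)$, and using the identification $\Theta'_T(\tau,\chi_2)(m)\cong\Theta'_T(\tau,\chi_2+m)$, this produces an $M$-indexed sum of equivariant Ext groups; by Theorem \ref{thrm:eq-ccc} each term matches the corresponding constructible term, so both Ext groups in the statement equal the same expression.

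The main obstacle is justifying the commutation of $\Hom$ with the infinite direct sum over $M$ in both settings. On the constructible side, the family $\{T_m^*\Theta_T(\tau,\chi_2)\}_{m\in M}$ is not locally finite on $M_\bR$ (each translate is a half-space meeting any compact set), and $\Theta_T(\sigma,\chi_1)$ is not compactly supported, so generic compactness criteria do not apply. The remedy is to pass via the adjunction for $j$ to derived global sections on the convex open cone $(\sigma^\vee_{\chi_1})^\circ$, which commute with direct sums of constructible sheaves by \v Cech-theoretic arguments; a short geometric analysis, based on the convexity of the intersections $(\sigma^\vee_{\chi_1})^\circ\cap(\tau^\vee_{\chi_2+m})^\circ$, then shows each term contributes either $\sfR$ or $0$. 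On the coherent side, the weight decomposition requires the equivariant Hom complex to be a locally finite rational $T$-representation, which is verified from the finiteness of the individual equivariant Ext summands.
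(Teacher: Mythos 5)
Your proposal follows essentially the same route as the paper's proof: apply the $(p_!,p^!)$ adjunction, decompose $p^!\Theta(\tau,\chi_2)\cong\bigoplus_{\xi\in M}\Theta_T(\tau,\chi_2+\xi)$, evaluate each summand via \cite{FLTZ2} Proposition 5.5, identify the nonvanishing terms through Lemma \ref{lemma: cones}, and treat the coherent side analogously via the weight decomposition of the forgetful functor. The only substantive addition is that you explicitly flag and sketch a justification for commuting $\Hom$ with the infinite direct sum over $M$, a step the paper's proof passes over in silence.
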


\begin{proof}
By adjunction we can write
$$
\dghom(\Theta(\sigma, \chi_1),\Theta(\tau, \chi_2)) = \dghom(p_!\Theta_{T}(\sigma,\chi_1),\Theta(\tau, \chi_2)) \cong \dghom(\Theta_{T}(\sigma,\chi_1),p^! \Theta(\tau, \chi_2)).
$$
Also, $p^! \Theta(\tau, \chi_2)$ decomposes as the direct sum 
$\bigoplus_{\xi \in M} \Theta_{T}(\tau,\chi_2 + \xi)$.
Thus we have that 
$$
\dghom(\Theta(\sigma, \chi_1),\Theta(\tau, \chi_2)) \cong \bigoplus_{\xi \in M} \dghom(\Theta_{T}(\sigma,\chi_1),\Theta_{T}(\tau,\chi_2 + \xi)).
$$
Then the first equality is a consequence of \cite{FLTZ2} Proposition 5.5 and Lemma \ref{lemma: cones}. The second equality follows in a similar way.
\end{proof}

Let $\lttr \subset \Sh_{c}(M_\bR)$ and $\lttrp \subset \cQ Coh_T(\cX_{\bSi})$ denote the full triangulated dg subcategories generated respectively by the objects $\Theta_{T}(\sigma, \chi)$ and $\Theta_{T}'(\sigma, \chi)$.
In the non-equivariant setting, we write $\ltr \subset \Sh_{qc}(\bT)$ for the full triangulated dg subcategory generated by the objects  
$\Theta(\sigma, \chi)$, and $\ltrp \subset \cQ Coh(\cX_{\bSi})$ the one generated by the 
objects $\Theta'(\sigma,\chi)$. The assignment $\Theta'(\sigma, \chi) \mapsto \Theta(\sigma, \chi)$ determines uniquely a functor 
$\kappa: \ltrp  \rightarrow \ltr$.
\begin{theorem}
\label{thm:square}
The functor $\kappa:\ltrp  \to \ltr$ is a quasi-equivalence, and fits in the following commutative diagram of dg categories 
$$
\xymatrix{
\lttrp \ar[r]^{\tilde \kappa} \ar[d]_{\forg} & \lttr \ar[d]^{p_!} \\
\ltrp \ar[r]_{\kappa} & \ltr.
}
$$
\end{theorem}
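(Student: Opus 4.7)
\emph{Proof plan.} My plan is to first construct $\kappa$ on the full dg subcategories of $\ltrp$ and $\ltr$ spanned by the generators $\Theta'(\sigma, \chi)$ and $\Theta(\sigma, \chi)$ respectively, and then extend to the triangulated hulls. The essential input is the computation in the proof of Proposition \ref{prop:fully faithful}, which shows that the Hom complexes on the constructible side admit an $M$-graded decomposition
$$
\dghom(\Theta(\sigma_1, \chi_1), \Theta(\sigma_2, \chi_2)) \cong \bigoplus_{\xi \in M} \dghom(\Theta_T(\sigma_1, \chi_1), \Theta_T(\sigma_2, \chi_2 + \xi)),
$$
coming from the $(p_!, p^!)$ adjunction together with the identity $p^! \Theta(\tau, \chi) \cong \bigoplus_{\xi \in M} \Theta_T(\tau, \chi+\xi)$. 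An entirely analogous decomposition holds on the coherent side via the adjunction between $\forg$ and its right adjoint equivariantization, whose value at $\forg(\Theta_T'(\sigma,\chi))$ is $\bigoplus_{\xi \in M} \Theta_T'(\sigma, \chi+\xi)$.

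Step one is to define $\kappa$ on the full sub-dg-category of generators in $\ltrp$ by setting $\Theta'(\sigma,\chi) \mapsto \Theta(\sigma,\chi)$ on objects, and declaring its action on morphism complexes to be the $\xi$-by-$\xi$ application of $\tilde \kappa$ under the above decompositions. This is a dg functor since composition on either side is compatible with the $M$-grading by $\xi$ and reduces termwise to composition of $\tilde \kappa$, which is itself a dg functor. Full faithfulness on generators then follows from Theorem \ref{thrm:eq-ccc}: $\tilde\kappa$ is a quasi-isomorphism on each equivariant summand, hence so is $\kappa$ on the total direct sum.

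Step two is to extend $\kappa$ to a dg functor $\ltrp \to \ltr$ by the universal property of triangulated hulls: since $\ltrp$ and $\ltr$ are by definition the smallest triangulated dg subcategories generated by the chosen objects, a quasi-equivalence of full subcategories on generators extends to a quasi-equivalence of the hulls. For commutativity of the square, both $p_! \tilde \kappa$ and $\kappa \forg$ send $\Theta_T'(\sigma,\chi)$ to $\Theta(\sigma,\chi)$ on objects, and at the level of morphism complexes both paths land in the $\xi=0$ summand of the target decomposition by applying $\tilde \kappa$ once, yielding strict commutativity on generators; extension to all of $\lttrp$ follows by triangulated naturality.

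The main obstacle is dg-functoriality: the direct sum decompositions must be set up compatibly with composition, with $\forg$, and with $p_!$ simultaneously, so that the would-be dg functor $\kappa$ is actually a dg functor and the square really commutes at the chain level. Discharging this requires fixing explicit dg models for the adjunctions in question and tracking the relevant unit and counit maps, a careful but ultimately routine verification.
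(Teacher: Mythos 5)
Your proposal follows essentially the same route as the paper: define $\kappa$ on the generators by $\Theta'(\sigma,\chi)\mapsto\Theta(\sigma,\chi)$, use the $M$-graded decomposition of Hom complexes (the content of Proposition \ref{prop:fully faithful}, via the $(p_!,p^!)$ adjunction on the constructible side and the analogous adjunction to $\forg$ on the coherent side) to show this is quasi fully faithful, and then extend to the triangulated hulls; commutativity of the square follows because both composites send $\Theta_T'(\sigma,\chi)$ to $\Theta(\sigma,\chi)$ and act by $\tilde\kappa$ on the $\xi=0$ piece. The paper's actual proof is just the two sentences ``$\kappa$ is a quasi-equivalence by Proposition \ref{prop:fully faithful}; commutativity follows from the definition,'' so you are spelling out details the authors suppress; the chain-level compatibility concern you raise at the end is real but, as you say, routine to discharge once dg models for the adjunctions are fixed (and is helped by the fact that the computed Hom complexes are formal, concentrated in degree~$0$).
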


\begin{proof}
The functor $\kappa$ is a quasi-equivalence by Proposition  \ref{prop:fully faithful}. The commutativity of the diagram follows 
from the definition of $\kappa$.  
\end{proof}

\begin{proposition}
The dg category $\Perf(\cX_{\bSi}) $ is contained in $\ltrp$. 
\end{proposition}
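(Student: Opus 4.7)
The plan is to reduce the statement to the equivariant setting via the forgetful functor $\forg$. By Corollary \ref{cor:gen}, $\Perf(\cX_\bSi)$ is generated as a triangulated dg category by the class $E_T$ of perfect complexes that admit a $T$-equivariant structure. Since $\ltrp$ is itself a triangulated dg subcategory of $\cQ Coh(\cX_\bSi)$, it is therefore enough to show that every $\cF \in E_T$ belongs to $\ltrp$. So fix $\cF \in E_T$ and choose an equivariant lift $\cF_T \in \Perf_T(\cX_\bSi)$ with $\forg(\cF_T) = \cF$.

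The heart of the argument is to establish $\cF_T \in \lttrp$, or equivalently that $\lttrp = \Perf_T(\cX_\bSi)$. This is where the equivariant CC correspondence (Theorem \ref{thrm:eq-ccc}) is used: $\tilde\kappa$ identifies $\Perf_T(\cX_\bSi)$ with $\Sh_{cc}(M_\bR,\tilde\Lambda_\bSi)$, and the objects $\Theta_T(\sigma,\chi)$ generate the right-hand side by the construction of $\tilde\kappa$ in \cite{FLTZ1,FLTZ2}. Transporting this back via $\tilde\kappa^{-1}$ gives that the $\Theta'_T(\sigma,\chi)$ generate $\Perf_T(\cX_\bSi)$, as desired. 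A self-contained alternative would be to observe that a \v{C}ech-style Postnikov system for the open cover $\{\cX_\sigma\}_{\sigma\in\Sigma}$ expresses each equivariant line bundle $\cO_{\cX_\bSi}(\chi)$ as a finite convolution of the pushforwards $\Theta'_T(\sigma,\chi)=(\cX_\sigma\hookrightarrow\cX_\bSi)_*\cO_{\cX_\sigma}(\chi)$, and then to apply the equivariant analogue of Proposition \ref{prop:resolution} to lift this from line bundles to arbitrary objects of $\Perf_T(\cX_\bSi)$.

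Granting $\cF_T\in\lttrp$, one finishes by transporting the iterated-cofiber presentation through $\forg$. Since $\forg$ is an exact dg functor of triangulated dg categories it preserves Postnikov systems and cofiber sequences, and by construction sends each generator $\Theta'_T(\sigma,\chi)$ to $\Theta'(\sigma,\chi)$. Hence $\cF=\forg(\cF_T)$ is the same finite iterated cofiber of the $\Theta'(\sigma,\chi)$'s, and therefore lies in $\ltrp$.

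The main obstacle is the generation statement $\lttrp=\Perf_T(\cX_\bSi)$: it is not formulated separately in the excerpt, and one must either cite the detailed construction in \cite{FLTZ1,FLTZ2} or give the \v{C}ech/Postnikov reduction sketched above. Every other step (the reduction to the equivariant case via Corollary \ref{cor:gen}, and the compatibility of $\forg$ with cofiber sequences) is formal.
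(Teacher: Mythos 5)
Your argument is correct and is essentially the same as the paper's: both reduce to the equivariant case via Corollary \ref{cor:gen} (i.e.\ $\langle E_T\rangle = \Perf(\cX_\bSi)$), both use the equivariant CC correspondence to get $\Perf_T(\cX_\bSi)\subset\lttrp$, and both transport this through the forgetful functor using that $\forg(\Theta_T'(\sigma,\chi))=\Theta'(\sigma,\chi)$ and that $\forg$ is exact so carries the triangulated hull into $\ltrp$. The extra details you supply (Postnikov/\v{C}ech justification of generation in the equivariant setting) are a harmless elaboration of what the paper leaves to \cite{FLTZ1,FLTZ2}.
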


\begin{proof}
The equivariant CC-correspondence implies that 
$\Perf_T(\cX_\bSi) \subset \lttrp$. The image of $\Perf_T(\cX_\bSi)$ under the functor 
$\forg$  is by definition the set $E_T$ of perfect complexes on $\cX$ admitting an equivariant structure. It follows that $E_T \subset \ltrp$. Since $\ltrp$ is triangulated it 
must contain also the triangulated hull of $E_T$, $\langle E_T \rangle$. By Corollary 
\ref{cor:gen} $\langle E_T \rangle = \Perf(\cX_\bSi)$, and therefore 
$\Perf(\cX_\bSi)$ is contained in $\ltrp$: this concludes the proof.
\end{proof}

In view of the previous proposition we can restrict $\kappa$ to $\Perf(\cX_{\bSi})$. 
In order to prove Theorem \ref{thm: cohcon} it remains 
only to show that $\kappa$ sends $\Perf(\cX_{\bSi})$ to $\Sh_c(\bT, \Lambda_{\bSi})$:
this is the analogue of Theorem 7.1 of \cite{FLTZ2} in the non-equivariant setting. 
\begin{proof}[The proof of Theorem \ref{thm: cohcon}.]

Corollary \ref{cor:gen} shows that $\Perf(\cX_{\bSi}) = \langle L \rangle$ where $L$ is the set of line bundles on $\cX_\bSi$. It follows that the image of $\Perf(\cX_{\bSi})$ under 
$\kappa$ is equal to $\langle \kappa(L) \rangle $, where  $\kappa(L) := \{\kappa(\cL), \cL \in L\} $. 
By Proposition \ref{prop:line} if $\cL$ is in $L$ 
we can choose an equivariant representative $\tilde \cL \in \Perf_T(\cX_\bSi)$ 
such that $\cL \cong \forg(\tilde \cL)$. Also, using Theorem \ref{thm:square} 
we can write  
$ \kappa(\cL) \cong p_! (\tilde \kappa(\tilde{\cL}))$. 
One consequence of Theorem 7.1 of \cite{FLTZ2} is  
that $\kappa(\tilde{\cL})$ belongs to $\Sh_{cc}(M_\bR, \tilde{\Lambda}_\bSi)$. This implies that $p_!( \tilde \kappa(\tilde{\cL}))$ is a constructible sheaf 
and also that its singular support 
is contained in $p(\tilde\Lambda)$:  $p_!(\tilde \kappa(\tilde{\cL})) \in \Sh_c(\bT, \Lambda_{\bSi})$. 
Since $\Sh_c(\bT, \Lambda_{\bSi})$ is triangulated we conclude that
$$
\kappa( \Perf(\cX_\bSi) ) \subset \langle \kappa(L) \rangle \subset \Sh_c(\bT, \Lambda_{\bSi})
$$
as we wanted to prove.
\end{proof}

\begin{remark}
We believe that there is an inclusion $\Sh_c(\bT, \Lambda_{\bSi}) \subset \ltr$ but  we do not know how to prove this in full generality. Indeed, this statement is equivalent to the fact that the quasi-fully faithful embedding $\kappa: \Perf(\cX_\bSi) \rightarrow \Sh_c(\bT, \Lambda_\bSi)$ is also essentially surjective. In the next Section we give a proof of the essential surjectivity of $\kappa$ but only under additional assumptions on the combinatorics of $\bSi$.
\end{remark}

\subsection{Functoriality of $\kappa$ and tensor products}
\label{sec:functoriality}
For completeness we prove that $\kappa$ is functorial with respect to maps of stacky fans, 
and that $\kappa$ intertwines the ordinary tensor product on $\Perf(\cX_\bSi)$ and the convolution tensor product on $\Sh_c(\bT, \Lambda_\bSi)$. This is the analogue in the non-equivariant 
setting of Theorem 5.16 and Corollary 5.18 of \cite{FLTZ2}. We remark that these results will not be used in the rest of the paper.

The next Definition is due to Borisov, Chen and Smith, see \cite{BCS} Remark 4.5.
\begin{definition}
A \emph{map of stacky fans} between $\bSi_1 = (N_1, \Sigma_1, \beta_1)$ and $\bSi_2 = (N_2, \Sigma_2, \beta_2)$ is a group homomorphism $f : N_1 \rightarrow N_2$ with the following properties: 
\begin{enumerate}
\item Denote $f_\bR: (N_1)_\bR \rightarrow (N_2)_\bR$ the homomorphism obtained by extension of scalars. Then $f_\bR$ maps cones in $\Sigma_1$ to cones in $\Sigma_2$: that is, if $\sigma_1 \in \Sigma_1$, there is $\sigma_2 \in \Sigma_2$ such that $f_\bR(\sigma_1) \subset \sigma_2$.
\item If $\sigma_1 \in \Sigma_1$ and $\sigma_2 \in \Sigma_2$ are as in (1), then $f(N_{\sigma_1}) \subset 
N_{\sigma_2}$.
\end{enumerate}
\end{definition}

To a map of stacky fans $f$ we can attach (see \cite{FLTZ2} Section 5.6 for additional details):
\begin{itemize}
\item  A map of vector spaces 
$\tilde{v_f}: M_{2, \bR} \rightarrow M_{1, \bR}$. As $\tilde{v_f}^{-1}(M_{1, \bZ})$ is contained in $M_{2, \bZ}$, $\tilde{v_f}$ descends to a map $v_f: \bT_1:= M_{1, \bR} / M_{1, \bZ} \rightarrow \bT_2 := M_{2, \bR} / M_{2, \bZ}$.

\item A map of stacks $u_f: \cX_{\bSi_1} \rightarrow  \cX_{\bSi_2}$  which is equivariant with respect to the torus actions: that is, $u_f$ fits in the commutative diagram
$$
\xymatrix{
T_1 \times \cX_{\bSi_1} \ar[r]^{u_f|_{T_1} \times u_f } \ar[d]_{m_1} &  T_2 \times \cX_{\bSi_2} \ar[d]^{m_2} \\
\cX_{\bSi_1} \ar[r]_{u_f} & \cX_{\bSi_2},
}
$$ 
where $T_1 \subset \cX_{\bSi_1}$ and  
$T_2 \subset \cX_{\bSi_2}$ are the open torus orbits, 
and $m_1$ and $m_2$ denote the respective actions on $\cX_{\bSi_1}$ and $\cX_{\bSi_2}$.
\end{itemize}

\begin{theorem}
\label{thm:functoriality}
Let $f: \bSi_1 \rightarrow \bSi_2$ be a map of stacky fans. 
Assume that the underlying homomorphism $f:N_1 \rightarrow N_2$ is injective. Then we have a commutative diagram of dg categories: 
$$
\xymatrix{
\langle \Theta'_{\bSi_2} \rangle \ar[r]^{\kappa_2} \ar[d]_{u^*}  & 
\langle \Theta_{\bSi_2} \ar[d]^{v_!} \rangle \\
\langle \Theta'_{\bSi_1} \rangle \ar[r]_{\kappa_1} & \langle \Theta_{\bSi_1} \rangle.
}
$$

\end{theorem}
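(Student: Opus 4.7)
The strategy is to reduce to the equivariant analogue, which is \cite{FLTZ2} Theorem 5.16. Both $\kappa_1 \circ u^*$ and $v_! \circ \kappa_2$ are exact dg functors between triangulated dg categories, so it suffices to produce a natural equivalence between them on the generating set $\{\Theta'_{\bSi_2}(\sigma,\chi)\}$ of $\langle \Theta'_{\bSi_2}\rangle$. By construction each generator is $\forg_2(\tilde F)$ for $\tilde F = \Theta'_{T,\bSi_2}(\sigma,\chi)$, its canonical $T_2$-equivariant lift. As a preliminary step one checks that $u^*$ sends $\langle \Theta'_{\bSi_2}\rangle$ into $\langle \Theta'_{\bSi_1}\rangle$, and likewise $v_!$ sends $\langle \Theta_{\bSi_2}\rangle$ into $\langle \Theta_{\bSi_1}\rangle$; both facts reduce to the equivariant analogue via the forgetful/$p_!$ compatibilities described below.

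The proof itself combines three ingredients:
\emph{(i)} Theorem \ref{thm:square}, which identifies $\kappa_i \circ \forg_i \cong p_{i,!}\circ \tilde\kappa_i$;
\emph{(ii)} the equivariant functoriality $\tilde\kappa_1 \circ \tilde u^* \cong \tilde v_! \circ \tilde\kappa_2$ from \cite{FLTZ2} Theorem 5.16, which is where the hypothesis that $f:N_1\to N_2$ be injective is invoked;
\emph{(iii)} two natural compatibilities linking the coherent and constructible sides, namely $u^*\circ \forg_2 \cong \forg_1\circ \tilde u^*$ (forgetting the equivariant structure commutes with pullback along a torus-equivariant morphism), and $v_!\circ p_{2,!} \cong p_{1,!}\circ \tilde v_!$ (from the commutative square $p_1\circ \tilde v_f = v_f \circ p_2$ of quotient maps, combined with $(g\circ h)_!\cong g_!\circ h_!$).

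Splicing these together, for $\tilde F$ as above and $F=\forg_2(\tilde F)$, one obtains the chain of natural equivalences
$$
v_!\,\kappa_2(F) \cong v_!\,p_{2,!}\,\tilde\kappa_2(\tilde F) \cong p_{1,!}\,\tilde v_!\,\tilde\kappa_2(\tilde F) \cong p_{1,!}\,\tilde\kappa_1\,\tilde u^*(\tilde F) \cong \kappa_1\,\forg_1\,\tilde u^*(\tilde F) \cong \kappa_1\,u^*(F),
$$
which establishes commutativity on generators. The main challenge is not any individual step but rather verifying that these identifications are canonical enough to compose into a genuine natural equivalence of dg functors, rather than just an object-wise isomorphism. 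Since each constituent equivalence arises either from an adjunction or from a base-change square, naturality is inherited, and the proof reduces to bookkeeping once the equivariant case and Theorem \ref{thm:square} are in hand.
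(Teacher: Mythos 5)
Your proof is correct and follows essentially the same route as the paper's: define the natural transformation on the generators $\Theta'(\sigma,\chi)$, pass to equivariant lifts, apply \cite{FLTZ2} Theorem 5.16 (which is where injectivity of $f$ enters), and transport back through the base-change square $v_! \circ p_{2!} \cong p_{1!} \circ \tilde v_!$ together with the compatibility $\kappa_i \circ \forg_i \cong p_{i!} \circ \tilde\kappa_i$ from Theorem \ref{thm:square}. The chain of quasi-isomorphisms you write down is identical to the one in the paper's proof; your added remarks on verifying that $u^*$ and $v_!$ preserve the relevant subcategories, and on naturality of the composite, are sensible clarifications rather than a different argument.
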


\begin{proof}
We define a natural transformation $\iota :v_! \circ \kappa_2 \to \kappa_1 \circ u^*$  on the generators $\Theta'(\sigma, \chi)$.  Note that there are 
natural quasi-isomorphisms
\begin{itemize}
\item 
$
v_! \circ \kappa_2(\Theta'_{\bSi_2}(\sigma, \chi)) \cong v_! \Theta_{\bSi_2}(\sigma, \chi) = v_! p_{2!} \Theta_{T_2, \bSi_2}(\sigma, \chi) \cong p_{1!} \tilde{v}_! \Theta_{T_2, \bSi_2}(\sigma, \chi) \cong \\
 \cong p_{1!} (\tilde v_! \tilde \kappa_2 (\Theta'_{\bSi_2}(\sigma, \chi))),
$
\item 
$ 
\kappa_1 \circ u^*(\Theta'_{\bSi_2}(\sigma, \chi)) \cong \kappa_1 \circ \forg(u^* \Theta_{T_2, \bSi_2}'(\sigma, \chi)) 
\cong
p_{1!} (\tilde \kappa_1 u^* \Theta_{T_2, \bSi_2}'(\sigma, \chi) ). 
$
\end{itemize}
The natural transformation $\iota$ can therefore be defined by 
applying $p_{1!}$ to the natural transformation $\tilde \iota:v_! \circ \tilde \kappa_2 \to \tilde \kappa_1 \circ u^*$ which is defined in \cite{FLTZ2} Theorem 5.16.  As $\tilde \iota$ is a quasi-isomorphism,  $\iota$ is as well, and this concludes the proof.
\end{proof}

\begin{corollary}
\label{cor: tensor}
The functor $\kappa: \Perf(\cX_\bSi) \rightarrow \Sh_c(\bT, \Lambda_\bSi)$ is monoidal with respect to the usual tensor product of perfect complexes, and the convolution product on constructible sheaves. 
\end{corollary}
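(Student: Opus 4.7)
The plan is to deduce the monoidality by applying Theorem \ref{thm:functoriality} to the diagonal map of stacky fans $\Delta: \bSi \to \bSi \times \bSi$ induced by the diagonal embedding $N \hookrightarrow N \oplus N$. First I would verify that this is indeed a (necessarily injective) map of stacky fans: it sends each cone $\sigma \in \Sigma$ into $\sigma \times \sigma \in \Sigma \times \Sigma$, and $\Delta(N_\sigma) \subset N_\sigma \oplus N_\sigma = N_{\sigma \times \sigma}$. Under the standard identification $\cX_{\bSi \times \bSi} \cong \cX_\bSi \times \cX_\bSi$, the induced stack map $u_\Delta$ is the geometric diagonal of $\cX_\bSi$, so, using smoothness of $\cX_\bSi$, we have $u_\Delta^*(\cF \boxtimes \cG) \cong \cF \otimes \cG$. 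Dually, the induced map on tori is the addition map $\bT \times \bT \to \bT$, so $(v_\Delta)_!(\cF' \boxtimes \cG')$ is by definition the convolution product $\cF' \star \cG'$ on $\Sh_c(\bT, \Lambda_\bSi)$.

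Second, I would establish compatibility of $\kappa$ with external products, namely
$$
\kappa_{\bSi \times \bSi}(\cF \boxtimes \cG) \cong \kappa_\bSi(\cF) \boxtimes \kappa_\bSi(\cG).
$$
Since both sides are bi-exact in $(\cF,\cG)$, it suffices to verify this on the generating objects $\Theta'(\sigma,\chi)$. On the coherent side, the factorizations $\cX_{\sigma_1 \times \sigma_2} \cong \cX_{\sigma_1} \times \cX_{\sigma_2}$ and $\cO(\chi_1, \chi_2) \cong \cO(\chi_1) \boxtimes \cO(\chi_2)$ give $\Theta'(\sigma_1, \chi_1) \boxtimes \Theta'(\sigma_2, \chi_2) \cong \Theta'_{\bSi \times \bSi}(\sigma_1 \times \sigma_2, (\chi_1, \chi_2))$. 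On the constructible side, the product formula $(\sigma_1 \times \sigma_2)^\vee_{(\chi_1, \chi_2)} = (\sigma_1)^\vee_{\chi_1} \times (\sigma_2)^\vee_{\chi_2}$ yields the analogous identity at the level of the equivariant generators $\Theta_T$, and exterior product commutes with $p_! \boxtimes p_!$ by proper base change applied to the Cartesian square formed by $p \times p$. This compatibility is already treated in the equivariant setting in Section 5.6 of \cite{FLTZ2} and descends to the nonequivariant case.

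Combining the two steps, for $\cF, \cG \in \Perf(\cX_\bSi)$ we obtain
$$
\kappa_\bSi(\cF \otimes \cG) \cong \kappa_\bSi(u_\Delta^*(\cF \boxtimes \cG)) \cong (v_\Delta)_! \kappa_{\bSi \times \bSi}(\cF \boxtimes \cG) \cong (v_\Delta)_!(\kappa_\bSi(\cF) \boxtimes \kappa_\bSi(\cG)) = \kappa_\bSi(\cF) \star \kappa_\bSi(\cG),
$$
where the middle isomorphism is Theorem \ref{thm:functoriality}. Naturality in $\cF$ and $\cG$ follows from the naturality statements in the preceding two steps, and compatibility with the unit is inherited from the equivariant case.

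The main obstacle will be the external-product compatibility: although intuitively clear, the passage from the equivariant external product on $\Sh_{cc}(M_\bR \times M_\bR, \widetilde\Lambda_{\bSi \times \bSi})$ to the nonequivariant one on $\Sh_c(\bT \times \bT, \Lambda_{\bSi \times \bSi})$ requires a careful application of proper base change for the map $p \times p$ against addition, together with the identification of the product Lagrangian $\Lambda_{\bSi \times \bSi}$ with $\Lambda_\bSi \times \Lambda_\bSi$. Once this is in hand, the corollary follows formally from the functoriality result of Theorem \ref{thm:functoriality}.
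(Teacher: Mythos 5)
Your proposal is correct and follows essentially the same approach as the paper: apply Theorem \ref{thm:functoriality} to the diagonal map of stacky fans $\Delta: N \to N \oplus N$, establish compatibility of $\kappa$ with external products by checking on the generators $\Theta'(\sigma,\chi)$, and then combine to pass from $\otimes^o$ to $\otimes^c$. The paper treats the external-product compatibility as immediate from the definition of $\kappa$ on generators, whereas you flag it as the step requiring care; in either case the structure of the argument is identical.
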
 
\begin{proof} 

Let $\bSi \times \bSi$ be the stacky fan of the 
product $\cX_\bSi \times \cX_\bSi$. 
The diagonal  
$\Delta: N \rightarrow N \times N$ defines a injective morphism 
of stacky fans between $\bSi$ and $\bSi \times \bSi$. 
Note that $u_f$ is the diagonal embedding $\cX_{\bSi} \to \cX_{\bSi} \times \cX_{\bSi}$, while $v_f$ is the addition map:
$v_f: \bT \times \bT \to \bT, v_f(a, b) = a +b$.  
Let $\cF, \cG$ be objects in $\ltrp_\bSi$ (resp., in $\ltr_\bSi$): 
the 
\emph{box product} of $\cF$ and $\cG$ is defined by the formula
$\cF \boxtimes \cG := p_1^* \cF \otimes p_2^* \cG \in \ltrp_{\bSi \times \bSi}$ (resp. $\cF \boxtimes \cG \in \ltr_{\bSi \times \bSi}$), 
where 
$p_1, p_2$ are the projections, $p_1, p_2: \cX_\bSi \times \cX_\bSi \rightarrow \cX_\bSi$ 
(resp. $p_1, p_2: \bT \times \bT \rightarrow \bT$). 
It follows immediately from the definition of $\kappa$ that there are natural 
isomorphisms $\kappa(\Theta'(\sigma_1, \chi_1) \boxtimes \Theta'(\sigma_2, \chi_2)) \cong \kappa(\Theta'(\sigma_1, \chi_1)) \boxtimes \kappa(\Theta'(\sigma_2, \chi_2))$.
 
Denote $\otimes^o$ the ordinary tensor product on $\ltrp$, and $\otimes^c$ the convolution tensor product on $\ltr$. 
Applying Theorem \ref{thm:functoriality} to $\Delta$ we obtain natural 
isomorphisms
$$\kappa(\Theta'(\sigma_1, \chi_1) \otimes^ o \Theta'(\sigma_2, \chi_2) ) \cong \kappa ( u_f^*  (\Theta'(\sigma_1, \chi_1) \boxtimes \Theta'(\sigma_2, \chi_2) )) \cong$$
$$ 
v_{f!} \kappa ( \Theta'(\sigma_1, \chi_1) \boxtimes  \Theta'(\sigma_2, \chi_2) ) = v_{f!} (\Theta(\sigma_1, 
\chi_1) \boxtimes \Theta(\sigma_2, \chi_2)) \cong 
\Theta(\sigma_1, 
\chi_1) \otimes^ c \Theta(\sigma_2, \chi_2).$$ 
This implies that $\kappa: \ltrp \rightarrow \ltr$ intertwines 
$\otimes^o$ and $\otimes ^c$. As these tensor structures 
restrict to the subcategories  
$\Perf(\cX_\bSi)$ and $\Sh_c(\bT, \Lambda_\bSi)$ this proves the statement.
\end{proof}

\section{Cragged stacky fans and the non equivariant cc-correspondence}
\label{sec:cragged-fans}

In this Section we prove that the quasi fully-faithful functor 
$\kappa: \Perf(\cX_\bSi) \rightarrow \Sh_c(\bT, \Lambda_\bSi)$ 
is essentially surjective provided $\bSi$ satisfies some special properties, that is, 
it is \emph{cragged} in the sense of Definition \ref{def:cragged fans}. 
 
\begin{remark}
\label{rem:equivalence}
The essential surjectivity of $\kappa$ is equivalent to the statement that the image of $p_{!}$ generates $\Sh_c(\bT, \Lambda_{\bSi})$. 
Indeed Corollary \ref{cor:gen} shows that the image of $\mathcal{F}org$ generates $\Perf(\cX_{\bSi})$, and $\tilde{\kappa}$ is an equivalence by the equivariant CC-correspondence of \cite{FLTZ2}. 
\end{remark}

\begin{lemma}
\label{lem:skyscraper}
Let $\cX$ a toric DM stack, consider the point $1 \in T \subset \cX$, and let $k(1)$ be the skyscraper sheaf at $1$. Then,  $\kappa(k(1)) \cong \bC_{\bT}$. 
\end{lemma}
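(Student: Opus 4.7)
The plan is to realize $k(1)$ as an explicit Koszul resolution inside $\ltrp$ and compute $\kappa$ termwise using Theorem \ref{thm:square}. Fix a $\bZ$-basis $m_1,\dots,m_n$ of $M$. On the open torus $T = \Spec \bC[M] \subset \cX_\bSi$, the identity point $1$ corresponds to the augmentation $\bC[M]\to\bC$, and its maximal ideal is cut out by the regular sequence $(z^{m_1}-1,\dots,z^{m_n}-1)$. Consequently the classical Koszul complex
$$K_T^\bullet := \Big[\,\Lambda^n M \otimes_\bC \cO_T \to \cdots \to M\otimes_\bC \cO_T \to \cO_T\,\Big],$$
placed in degrees $[-n,0]$ with differential given by contraction against $\sum_i e_i^*\otimes (z^{m_i}-1)$, is a resolution of $k(1)$ on $T$.

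Next I would push $K_T^\bullet$ forward along the open immersion $j: T \hookrightarrow \cX_\bSi$. In the stacky quotient description $\cX_\bSi = [U_\bSi/G_\bSi]$, the torus $T$ corresponds to the basic affine open $\{z_1\cdots z_r \neq 0\} \subset U_\bSi$, so $j$ is an affine morphism and $j_*$ is exact. Therefore $j_*K_T^\bullet$ is a resolution of $k(1)$ on $\cX_\bSi$ whose terms are direct sums of copies of $j_*\cO_T = \Theta'(\{0\},0)$, and which therefore lies in $\ltrp$. Applying $\kappa$ termwise, Theorem \ref{thm:square} identifies $\kappa(\Theta'(\{0\},0))$ with $\Theta(\{0\},0) = p_!\bC_{M_\bR}$ (up to the orientation shift by $\dim M_\bR$); concretely this is the local system on $\bT$ with fiber $\bC[M]$ and monodromy the regular representation of $\pi_1(\bT)=M$.

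It remains to transport the differentials. Under the isomorphism $\End(\Theta'(\{0\},0)) \cong \bC[M] \cong \End(\Theta(\{0\},0))$ from Proposition \ref{prop:fully faithful}, the endomorphism ``multiplication by $z^m$'' on $j_*\cO_T$ matches the monodromy automorphism of $p_!\bC_{M_\bR}$ induced by the deck transformation of $M_\bR$ by $m \in M$. Granting this matching, $\kappa(k(1))$ is computed by the Koszul complex of $p_!\bC_{M_\bR}$ against the augmentation ideal of $\bC[M]$; since $p_!\bC_{M_\bR}$ is pointwise the free rank-one $\bC[M]$-module, this Koszul complex is quasi-isomorphic to $p_!\bC_{M_\bR}\otimes_{\bC[M]}^L \bC \cong \bC_\bT$, which proves the lemma. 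The main point demanding care is the matching of endomorphisms just mentioned, which must be verified by unraveling the explicit construction of $\tilde\kappa$ in \cite{FLTZ1,FLTZ2} together with the commutative square of Theorem \ref{thm:square}.
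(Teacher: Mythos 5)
Your proof takes essentially the same route as the paper: resolve $k(1)$ by a Koszul complex of $j_*\cO_T$-modules, identify $\kappa$ on these with the tautological equivalence between $\bC[\pi_1(\bT)]$-modules and local systems on $\bT$, and observe that the Koszul complex of the regular module computes the trivial module, i.e.\ $\bC_\bT$. The only difference is stylistic: you flag explicitly the matching between ``multiplication by $z^m$'' on $j_*\cO_T$ and the deck-transformation monodromy on $p_!\bC_{M_\bR}$ (and the orientation shift), which the paper treats more informally by simply asserting that $\kappa$ carries a complex of $j_*\cO_T$-modules to the ``corresponding'' complex of local systems.
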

\begin{proof}
Denote $j: T \hookrightarrow \cX$ the inclusion of the open torus orbit. Computing the Koszul resolution of $k(1)$ gives a complex 
\begin{equation}
\label{eq1}
(j_*\mathcal{O}_T)^{\binom{n}{n}} 
\rightarrow \dots \rightarrow (j_* \cO_T)^{\binom{n}{2}} \rightarrow (j_* \cO_T)^n \rightarrow  j_* \cO_T, 
\end{equation}
which is isomorphic to $k(1)$ in the derived category of quasi-coherent sheaves on $\cX$.
Recall that $\kappa$ is defined precisely by assigning quasi-constructible sheaves to quasi-coherent sheaves obtained as push-forward of structure sheaves on torus invariant open subsets.
Thus, we can evaluate $\kappa$ on the complex (\ref{eq1}).

The functor $\kappa$ admits a 
simple geometric interpretation on complexes of $j_*\cO_T$-modules. Indeed, we can regard a 
complex of $j_*\mathcal{O}_T$-modules as a complex of modules over the group algebra $\bC[\pi_1(\bT)] = \bC[T_1, T_1^{-1}, \dots, T_m, T_m^{-1}]$. 
Further, the abelian category of modules over 
$\bC[\pi_1(\bT)]$ is naturally equivalent to the category of 
locally constant quasi-constructible sheaves on $\bT$: $\kappa$ maps a complex of  
$j_*\mathcal{O}_T$-modules to the corresponding 
complex of locally constant quasi-constructible sheaves on $\bT$.

As a complex of $\bC[\pi_1(\bT)]$-modules, (\ref{eq1}) 
is the Koszul resolution of the trivial representation of 
$\bC[\pi_1(\bT)]$,
\begin{equation}
(\bC[\pi_1(\bT)])^{\binom{n}{n}} 
\rightarrow \dots \rightarrow (\bC[\pi_1(\bT)])^{\binom{n}{2}} \rightarrow (\bC[\pi_1(\bT)])^n \rightarrow \bC[\pi_1(\bT)].
\end{equation}
Thus $\kappa(k(1))$ is the locally constant sheaf given by the trivial $\bC[\pi_1(\bT)]$-module, i.e. the constant sheaf $\bC_\bT$. This concludes the proof.
\end{proof}

Let $\mathbf{\Sigma} = (N, \Sigma, \beta)$ be a stacky fan. 
Let $\beta$ be of the form $\beta: \bZ^r = \oplus_{i=1}^{i=r}\bZ e_i \rightarrow N$. As in Section \ref{sec:main} we denote $b_i = \beta(e_i)$ 
and $B = \{b_1, \dots, b_r\}$. 
\begin{definition}
\label{def:cragged fans}
We say that $\bSi$ is \emph{cragged} if the following two conditions 
are satisfied:
\begin{enumerate}
\item (exhaustiveness) Let $S$ be a subset of $\Sigma$. Denote $\langle S \rangle \subset N_{\bR}$ the smallest convex cone containing all cones $\sigma$ in $S$. Then there is a subset $T \subset \Sigma$ such that $\langle S \rangle =  \bigcup_{\tau \in T}\tau$.
\item (unimodularity) Let $b_{1}, \dots, b_{l}$ be a set of linearly independent vectors in $B$, and let $\rho_i = \bR_{\geq 0}b_i$. Exhaustiveness implies that 
$\langle \rho_1, \dots ,\rho_l \rangle = \bigcup_{\tau \in T}\tau$. Let $N_T \subset N$ be the sublattice generated by $B \cap \bigcup_{\tau \in T}\tau$. Then $b_{1}, \dots, b_{l}$ form a $\bZ$-basis for $N_T$.
\end{enumerate}
\end{definition}

\begin{proposition}
\label{prop:additive}
A stacky fan $\bSi$ is cragged if and only if $\tilde \Lambda_\bSi$ is a cragged Lagrangian.
\end{proposition}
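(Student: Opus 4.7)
The plan is to apply Lemma \ref{lem:cragged}. Since $\tilde{\Lambda}_{\bSi} = \bigcup_{\tau \in \Sigma} \bigcup_{\chi \in M_\tau} \tau^{\bot}_{\chi} \times (-\tau)$ already presents the Lagrangian as a locally finite union of products of a closed affine subspace and a closed convex cone, condition $(2')$ of Lemma \ref{lem:cragged} is automatic, so the proposition reduces to showing that $\bSi$ is cragged if and only if $\tilde{\Lambda}_{\bSi}$ is pre-additive, i.e.\ closed under fibrewise addition. Setting $H_\tau := \{\, x \in M_{\bR} : \langle x, b\rangle \in \bZ \text{ for all } b \in B \cap \tau \,\} = \bigcup_{\chi \in M_\tau} \tau^{\bot}_{\chi}$, the fibre of $\tilde{\Lambda}_{\bSi}$ over $x$ is $\bigcup_{\tau :\, x \in H_\tau} (-\tau)$, so pre-additivity translates into the following statement: whenever $\tau_1, \tau_2 \in \Sigma$ satisfy $x \in H_{\tau_1} \cap H_{\tau_2}$, the cone $\tau_1 + \tau_2$ decomposes as a union of cones $\tau \in \Sigma$ each of which has $x \in H_\tau$.

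For the $(\Rightarrow)$ implication, exhaustiveness applied to $\{\tau_1, \tau_2\}$ produces $T \subset \Sigma$ with $\tau_1 + \tau_2 = \bigcup_{\tau \in T} \tau$. Fix $\tau \in T$ and $b \in B \cap \tau$. Since $\tau_1 + \tau_2$ is the convex cone generated by $B \cap (\tau_1 \cup \tau_2)$, Carath\'eodory's theorem provides a linearly independent subset $b_{j_1}, \dots, b_{j_l}$ of $B \cap (\tau_1 \cup \tau_2)$ together with non-negative reals $\mu_k$ such that $b = \sum_k \mu_k b_{j_k}$. Unimodularity applied to $\{b_{j_1}, \dots, b_{j_l}\}$ asserts that these vectors form a $\bZ$-basis of the lattice generated by $B \cap \langle \rho_{j_1}, \dots, \rho_{j_l}\rangle$; since $b$ is one of those generators, uniqueness of the linear expansion forces $\mu_k \in \bZ$. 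As $x \in H_{\tau_1} \cap H_{\tau_2}$ implies $\langle x, b_{j_k}\rangle \in \bZ$ for every $k$, one concludes that $\langle x, b\rangle = \sum_k \mu_k \langle x, b_{j_k}\rangle \in \bZ$, so $x \in H_\tau$.

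For the $(\Leftarrow)$ implication, evaluating pre-additivity at $x = 0$ (which lies in every $H_\tau$) immediately yields that $\tau_1 + \tau_2$ is a union of cones of $\Sigma$ for every pair $\tau_1, \tau_2$; since $\Sigma$ is finite by completeness, a short induction extends this to exhaustiveness for arbitrary $S \subset \Sigma$. For unimodularity, take $b_1, \dots, b_l \in B$ linearly independent together with $b = \sum_j \lambda_j b_j \in B$ ($\lambda_j \geq 0$). For each $i$, pick $x_i \in M_\bR$ with $\langle x_i, b_j\rangle = \delta_{ij}$ (possible by linear independence), so that $x_i \in H_{\rho_j}$ for every $j$ and $(x_i, -b_j) \in \tilde{\Lambda}_{\bSi}$. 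Pre-additivity combined with conicity makes the fibre over $x_i$ a closed convex cone, which therefore contains $(x_i, -b) = \sum_j \lambda_j (x_i, -b_j)$. Hence $b \in B \cap \tau$ for some $\tau \in \Sigma$ with $x_i \in H_\tau$, forcing $\lambda_i = \langle x_i, b\rangle \in \bZ$. Varying $i$ recovers all coefficients, so $b_1, \dots, b_l$ $\bZ$-span the lattice $N_T$ of the unimodularity clause.

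The main subtle point is the use of Carath\'eodory's theorem in the $(\Rightarrow)$ direction: the rays of a typical cone $\tau \in T$ are not automatically among those of $\tau_1$ or $\tau_2$, so a naive choice of maximal linearly independent subset of $B \cap (\tau_1 \cup \tau_2)$ need not express $b$ as a positive combination. The Carath\'eodory decomposition lets one pick, for each $b$, an appropriate basis of $B \cap (\tau_1 \cup \tau_2)$ to feed into unimodularity. Completeness of $\bSi$, assumed throughout Section \ref{sec:main}, is what lets one bootstrap the pairwise additivity statement to full exhaustiveness in the $(\Leftarrow)$ direction.
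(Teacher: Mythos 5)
Your $(\Rightarrow)$ direction is correct and takes a genuinely different route from the paper's: you argue pairwise (via Carath\'eodory decompositions of $\tau_1+\tau_2$) whereas the paper works with the full set $S_\phi = \{\tau : \phi \in M_\tau\}$ at once, shows $T_\phi \subset S_\phi$, and identifies the whole fibre with $-\langle S_\phi\rangle$. Both routes use exhaustiveness and unimodularity in an essential way; yours has the advantage of localizing the argument to pairs, at the cost of an extra Carath\'eodory step. Your $(\Leftarrow)$ argument for unimodularity is also correct (and close in spirit to the paper's, which likewise constructs a suitable test point $\phi$ and exploits convexity of the fibre).

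However, your $(\Leftarrow)$ argument for exhaustiveness has a genuine gap. At $x=0$ every $H_\tau$ contains $0$, so the fibre of $\tilde\Lambda_\bSi$ over $0$ is $\bigcup_{\tau\in\Sigma}(-\tau) = -|\Sigma|$, which by completeness of $\Sigma$ is all of $N_\bR$. Pre-additivity of that fibre is then trivially satisfied and carries no information about the combinatorics of the fan; it certainly does not say that $\tau_1+\tau_2$ is a union of cones of $\Sigma$. (The reformulation of pre-additivity you state earlier is also slightly off: convexity of the fibre over $x$ only forces $\tau_1+\tau_2$ to be \emph{contained in} $\bigcup_{\tau:\,x\in H_\tau}\tau$, not to decompose as a union of such cones; your $(\Rightarrow)$ direction is fine because there you prove the stronger covering statement, but the $(\Leftarrow)$ direction needs to extract the decomposition from containment.) To derive exhaustiveness from pre-additivity one has to choose $x$ carefully so that the set $\{\tau : x \in H_\tau\}$ is exactly the set of cones contained in $\langle S\rangle$, which is what the paper does: it picks lifts $\phi_1,\dots,\phi_l$ of a generating set of $M_S=\Hom(N_S,\bZ)$, shows $\bigcap_i(\tilde\Lambda_\bSi)_{\phi_i}=\bigcup_{\sigma\subset\langle S\rangle}-\sigma$, and then uses convexity of each fibre (an intersection of convex cones is convex) to conclude that this union equals $-\langle S\rangle$. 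The choice $x=0$ cannot achieve this, so that step of your proof needs to be replaced.
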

\begin{proof}

We start by proving that if $\bSi$ is cragged then $\Lambda_\bSi$ 
is a cragged Lagrangian. Note that 
for all stacky fans $\mathbf{\Sigma}$, $\tilde{\Lambda}_\mathbf{\Sigma}$ satisfies the assumptions of Lemma \ref{lem:cragged}. Thus it is sufficient  to prove that if $\bSi$ is exhaustive and unimodular then 
$\tilde{\Lambda}_\bSi$ is pre-additive. 
We have to show that for all $\phi \in M_{\bR}$, $(\tilde{\Lambda}_{\bSi})_\phi$ is a convex cone. 
Denote $S_{\phi}$ the 
subset of $\Sigma$ consisting of all cones $\tau$ such that $\phi(N_{\tau}) $ takes integral values,  that is such that $\phi \in M_{\tau}$.

 By exhaustiveness, there is a $T_{\phi} \subset \Sigma$ such that $\langle S_{\phi} \rangle = 
\bigcup_{\tau \in T_\phi}\tau$. 
Using unimodularity we can find a subset $B_\phi \subset B$ with the  
following properties:
\begin{enumerate} 
\item the vectors in $B_\phi$ form a basis of the sublattice generated by $B \cap T_\phi$,
\item for each $b \in B_\phi$ there is a $\sigma \in S_\phi$ such that $b$ belongs to $\sigma$.
\end{enumerate}
This implies that $T_\phi \subset S_\phi$. Indeed,
let $\tau \in T_{\phi}$ and set $B^\tau = B \cap \tau$. 
The cone $\tau$ lies in $S_\phi$ if and only if $\phi$ takes integral values on the elements of $B^\tau$. Since $B^\tau$ is contained in the sublattice generated by $B \cap \bigcup_{\tau \in T_\phi}\tau$, all its elements can be written as integral linear combinations of vectors in 
$B_\phi$. By definition $\phi$ takes integral values on $B_\phi$, therefore the same is true for $B^\tau$ as well.
These considerations give us an inclusion 
$$
-\langle S_\phi \rangle = \bigcup_{\tau \in T_{\phi}}-\tau \subseteq  \bigcup_{\sigma \in S_{\phi}}-\sigma = (\tilde{\Lambda}_{\bSi})_\phi,
$$ 
and the reverse inclusion is clear. Thus $(\tilde{\Lambda}_{\bSi})_\phi = -\langle S_\phi \rangle$, and therefore $(\tilde{\Lambda}_{\bSi})_\phi$ is a convex cone as we needed to show.

Let us prove the other implication: if $\tilde{\Lambda}_\bSi$ is pre-additive then 
$\bSi$ is exhaustive and unimodular. Let $S$ be a subset of $\Sigma$, denote $N_S$ the sublattice of $N$ generated by $\langle S \rangle \cap B$ and denote $M_S$ the dual lattice. Pick a minimal set of generators $\phi_1^S, \cdots, \phi_l^S$ of $M_S$. 
Choose preimages $\phi_1, \cdots, \phi_l$ in $M_\bR$ such that $\phi_i$ is sent to $\phi_i^S$ under the quotient map $M_\bR \rightarrow M_S\otimes_\bZ\bR $. 
The fibers of $\tilde\Lambda_\bSi$ are naturally subsets of $N_\bR$. Thus it makes sense to consider the intersection of the fibers 
$(\tilde{\Lambda}_\bSi)_{\phi_i}$ as a subset of $N_\bR$. It is immediate to see that 
$\cap_{i}(\tilde{\Lambda}_\bSi)_{\phi_i} = \bigcup_{\sigma \subset \langle S \rangle}-\sigma$. Since $\tilde{\Lambda}_\bSi$ is pre-additive, $(\tilde{\Lambda}_\bSi)_{\phi_i}$ is a convex cone for all $i$, and therefore $\cap_{i}(\tilde{\Lambda}_\bSi)_{\phi_i} =\bigcup_{\sigma \subset \langle S \rangle}-\sigma$ is a 
convex cone as well. 
There is an inclusion $\bigcup_{\sigma \subset \langle S \rangle}\sigma \subset \langle S \rangle$. The reverse inclusion holds as well since $\bigcup_{\sigma \in S } \sigma \subset \bigcup_{\sigma \subset \langle S \rangle}\sigma $, and by definition $\langle S \rangle$ is the smallest convex cone containing $\bigcup_{\sigma \in S } \sigma$. Thus $\langle S \rangle = \bigcup_{\sigma \subset \langle S \rangle}\sigma$: this proves that $\bSi$ is exhaustive.

Next we show that unimodularity holds as well. Suppose to the contrary that unimodularity fails: in the notation of the statement this means that there are one dimensional cones $\rho_{1}, \dots ,\rho_m$ such that $\langle \rho_1, \dots ,\rho_m \rangle = \bigcup_{\tau \in T}\tau$ but $\{b_1, \dots, b_m\}$ is not a basis of the sublattice $N_T$  generated by $B \cap \bigcup_{\tau \in T}\tau$. We define a homomorphism 
$\phi: N \rightarrow \bR$ having the following two properties:
\begin{enumerate}
\item $\phi$ takes integer values on $\{b_1, \dots, b_m\}$, 
\item there is an element $b_0 \in B \cap \bigcup_{\tau \in T}\tau$ such that $\phi(b_0)$ is not an integer.
\end{enumerate} 
Denote $\rho_0$ the one dimensional cone such that $b_0 \in \rho_0$. The conditions on $\phi$ imply that $-\rho_i$ is contained in $(\tilde{\Lambda}_\bSi)_\phi$ for all $\rho \in \{ 1, \dots, n\}$, while $-\rho_0$ is not: $-\rho_0 \nsubseteq (\tilde{\Lambda}_\bSi)_\phi$. 
Since $\Lambda_\bSi$ is pre-additive $(\tilde{\Lambda}_\bSi)_\phi$ is a convex cone and therefore contains $-\langle \rho_1, \dots ,\rho_m \rangle$ as a convex sub-cone. 
This gives a contradiction because then we would have  
$-\rho_0 \subset -\langle \rho_1, \dots ,\rho_m \rangle  \subset (\tilde{\Lambda}_\bSi)_\phi$. 
\end{proof}

We give below two examples of fans where 
either unimodularity or exhaustiveness fails.

\begin{example}

 \begin{itemize}
  \item 
Consider the fan $\Sigma$ in $\bR^3$ given by 
$\langle 0 \rangle$, the one-dimensional cones $\rho_i$, the four faces $\langle \rho_1, \rho_2 \rangle, \langle \rho_1, \rho_3 \rangle, \langle \rho_2, \rho_4 \rangle, \langle \rho_3, \rho_4 \rangle$ and the cone spanned by all the one-dimensional cones $\langle \rho_1, \rho_2, \rho_3, \rho_4 \rangle$. 
\[ 
\xymatrix{ 
&& \bullet \ar@{-}[r] \ar@{-}[d] & \bullet \ar@{-}[d] \\
&& \bullet \ar@{-}[r] & \bullet \\
 \bullet \ar@{-}[rruu] | {\rho_1} \ar@{-}[rrruu]| {\rho_2}  \ar@{-}[rru] | {\rho_3} \ar@{-}[rrru] | {\rho_4} & & & } \]
Then $\Sigma$ is not exhaustive: indeed the cone $\langle \rho_1, \rho_2, \rho_3 \rangle$ cannot be written as a union of cones of $\Sigma$. 

\item All complete fans in $\bR^2$ are exhaustive, but they might fail to be unimodular: 
in fact unimodularity is a more restrictive condition than smoothness. Consider the smooth complete fan $\Sigma$ in $\bR^2$ spanned by the one-dimensional cones $\rho_1 :=\bR_{>0} (1,0)$, $\rho_2 :=\bR_{>0} (1,1)$,  
$\rho_3:=\bR_{>0}(1, -1)$ and $\rho_4:=\bR_{>0}(-1, 0)$: 
\[ \xymatrix{ & & \bullet \\
\bullet \ar@{-}[r] | {\rho_4} & \bullet   \ar@{-}[ru] | {\rho_1} \ar@{-}[r]| {\rho_2}  \ar@{-}[rd] | {\rho_3} & \bullet \\
& & \bullet }
\]
If $i, j$ are in $\{1, \ldots, 4\}$ 
denote $\sigma_{ij}$ the two dimensional cone of $\Sigma$ given by $\sigma_{ij} = \langle \rho_i, \rho_j \rangle$.
Then $\langle \rho_1 ,\rho_3 \rangle = \sigma_{12} \cup \sigma_{23}$ but the $\bZ$-lattice spanned by $N \cap  (\sigma_{12} \cup \sigma_{23})$ is generated by $(1,1), (1, -1)$ and $(1, 0)$. Hence 
$\Sigma$ is not unimodular. 
\end{itemize}
\end{example} 

\begin{corollary}
$\bSi$ is cragged if and only if $\Lambda_\bSi$ is a cragged Lagrangian.
\end{corollary}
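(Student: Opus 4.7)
The plan is to reduce to Proposition \ref{prop:additive}: it suffices to show that $\tilde{\Lambda}_\bSi \subset T^*M_\bR$ is a cragged Lagrangian if and only if its quotient $\Lambda_\bSi \subset T^*\bT$ is. This is essentially a tautology, because craggedness is defined locally via an atlas (Definition \ref{def:cragged}(2)), and the projection $p : M_\bR \to \bT$ is a local diffeomorphism relating the two Lagrangians.

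More concretely, I would begin by choosing an open cover $\{U_\alpha\}$ of $\bT$ that is \emph{evenly covered} by $p$, in the sense that each $p^{-1}(U_\alpha)$ is a disjoint union $\bigsqcup_{m \in M}(\tilde{U}_\alpha + m)$ of $M$-translates of a single open set $\tilde{U}_\alpha \subset M_\bR$. The inverse of $p|_{\tilde{U}_\alpha}$ composed with the inclusion $\tilde{U}_\alpha \hookrightarrow M_\bR$ gives a chart $\phi_\alpha : U_\alpha \to M_\bR$ whose differential trivializes the cotangent bundle. Because $\tilde{\Lambda}_\bSi$ is invariant under translation by $M$, the image $(\phi_\alpha, d\phi_\alpha)(\Lambda_\bSi \cap T^*U_\alpha)$ coincides with $\tilde{\Lambda}_\bSi \cap T^*\tilde{U}_\alpha$.

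Given this identification, if $\tilde{\Lambda}_\bSi$ is cragged then I may take its defining cragged atlas of $M_\bR$, replace it with its intersections with the lifts $\tilde{U}_\alpha + m$ (possibly shrinking the $U_\alpha$ first), and push the result down to obtain a cragged atlas for $\Lambda_\bSi$ on $\bT$. Conversely, if $\Lambda_\bSi$ is cragged then pulling back the defining atlas via $p$ and translating by $M$ produces a cragged atlas for $\tilde{\Lambda}_\bSi$ on $M_\bR$. Combined with Proposition \ref{prop:additive}, this yields the claimed equivalence.

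The only point that requires any care is checking that the notion of being globally cragged in a chart is compatible with the translation action of $M$ and with restriction to smaller open subsets of a vector space; both are immediate from Definition \ref{def:cragged}(1), since translation by an element of $M$ preserves stratifications and convex cones. I expect no serious obstacle here: the argument is essentially the observation that craggedness is a local property and $p$ is a local diffeomorphism intertwining $\tilde{\Lambda}_\bSi$ and $\Lambda_\bSi$.
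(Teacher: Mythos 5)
Your proof is correct and follows essentially the same route as the paper: both reduce the statement to Proposition \ref{prop:additive} via the observation that craggedness is a local condition and that $p : M_\bR \to \bT$ is a local diffeomorphism identifying $\tilde{\Lambda}_\bSi$ with $\Lambda_\bSi$ locally. The paper states this locality argument in one sentence, whereas you spell out the explicit construction of compatible charts; the content is the same.
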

\begin{proof}
For Lagrangians being cragged is a local condition, 
thus $\Lambda_\bSi$ is cragged if and only if $\tilde \Lambda_\bSi$ is cragged. This together with Proposition \ref{prop:additive} proves the statement.
\end{proof}

The conditions of Definitions \ref{def:cragged fans} are quite restrictive   but they are verified in a number of interesting examples. For instance, all toric Fano surfaces have cragged fans. Also Propositions \ref{prop:fwps} and \ref{prop:quotients} below imply, in particular, that in all dimensions there are infinitely many examples of toric orbifolds with cragged fans.

\begin{definition}
Let $\bSi = (N, \Sigma, \beta)$ be a stacky fan such that $N$ has rank 
$n$ and $\Sigma$ contains $n+1$ one dimensional cones. We say that $\cX_\bSi$ is a \emph{fake weighted projective space}.
\end{definition}

\begin{remark}
Equivalently, fake weighted projective spaces are toric orbifolds with Picard number $1$. 
Fake weighted projective spaces contain ordinary weighted projective 
spaces but form a strictly larger class, see \cite{Bu} for several concrete examples. 
\end{remark}

\begin{proposition}
\label{prop:fwps}
If $\bSi = (N, \Sigma, \beta)$ is the stacky fan of a fake weighted projective space then $\bSi$ is cragged.
\end{proposition}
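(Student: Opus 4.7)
The plan is to verify directly the two defining conditions of cragged, exhaustiveness and unimodularity, using the combinatorics of fake weighted projective spaces. The starting point is that $\Sigma$ has exactly $n+1$ rays $\rho_1, \dots, \rho_{n+1}$ spanning the $n$-dimensional space $N_\bR$, so the generators $b_i = \beta(e_i)$ satisfy a unique (up to scaling) linear relation. Completeness of $\Sigma$, together with the fact that the rays positively span $N_\bR$, forces this relation to have the form $\sum_{i=1}^{n+1} a_i b_i = 0$ with all $a_i > 0$. A standard consequence is that any $n$ of the $b_i$ are linearly independent, and the maximal cones of $\Sigma$ are precisely the $n$-element subsets of $\{\rho_1, \dots, \rho_{n+1}\}$; in particular $\Sigma$ is simplicial.

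For exhaustiveness, I would take any $S \subset \Sigma$ and let $I \subset \{1, \dots, n+1\}$ be the set of ray-indices occurring in some cone of $S$, so that $\langle S \rangle = \langle \rho_i : i \in I \rangle$. If $|I| \leq n$, the vectors $\{b_i : i \in I\}$ are linearly independent and the cone they span is a face of any maximal cone of $\Sigma$ containing all the $\rho_i$, $i \in I$; hence it lies in $\Sigma$ itself. If $|I| = n+1$ then $\langle S \rangle \supseteq \bigcup_{\sigma \in \Sigma}\sigma = N_\bR$ by completeness, so $\langle S \rangle = N_\bR$ is manifestly the union of all cones of $\Sigma$. Either way the condition holds.

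For unimodularity, I pick linearly independent $b_{i_1}, \dots, b_{i_l}$ (necessarily $l \leq n$) and let $T \subset \Sigma$ be the collection of cones contained in $\langle \rho_{i_1}, \dots, \rho_{i_l} \rangle$. The crucial step is to show that no other ray generator $b_j$, $j \notin \{i_1, \dots, i_l\}$, lies inside this cone: such an inclusion would produce a nontrivial relation $b_j - \sum_{k=1}^l c_k b_{i_k} = 0$ with $c_k \geq 0$, and since the space of linear relations among $b_1, \dots, b_{n+1}$ is one-dimensional, this relation must be a nonzero scalar multiple of $\sum_i a_i b_i = 0$. Comparing the coefficient of $b_j$ fixes the scalar to be positive, which then forces each $c_k$ to be strictly negative, contradicting $c_k \geq 0$. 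Consequently $B \cap \bigcup_{\tau \in T}\tau = \{b_{i_1}, \dots, b_{i_l}\}$, so by definition $N_T$ is the lattice generated by these linearly independent vectors and they form a $\bZ$-basis tautologically.

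The main obstacle is really the general-position argument that rules out extra ray generators inside a proper subcone; once this is established, unimodularity becomes immediate. In turn that step reduces to the observation that completeness of a Picard-number-one fan forces a strictly positive linear relation among the ray generators, which is essentially the only nontrivial combinatorial input needed.
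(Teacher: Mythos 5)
Your proof is correct and follows essentially the same route as the paper: both verify exhaustiveness and unimodularity directly from the observation that $\Sigma$ is the boundary of an $n$-simplex with all ray generators satisfying a unique positive relation. The one place you go further is the unimodularity step: the paper simply notes that $\langle\rho_{i_1},\dots,\rho_{i_l}\rangle$ is itself a cone of $\Sigma$ and declares the identity $B \cap \langle\rho_{i_1},\dots,\rho_{i_l}\rangle = \{b_{i_1},\dots,b_{i_l}\}$ tautological (implicitly invoking the fan axiom that distinct cones meet in common faces), whereas you spell out the explicit positivity argument showing no further $b_j$ can lie in that cone; this is a welcome clarification but does not change the proof strategy.
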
 
\begin{proof}
Let $N = \bZ^n$. Then $\bSi$ is fully determined by the assignment of a set of $n+1$ vectors $B = \{b_1 \dots b_{n+1}\}$ in $N = \bZ^n$: they satisfy an equation of the form $m_1b_1 + \dots + m_{n+1}b_{n+1} = 0$ where all the coefficients are strictly positive integers. The one dimensional cones of $\Sigma$ are given by $\rho_i = \bR_{\geq 0}b_i$, $\beta: \bigoplus_{i=1}^{i=n+1}\bZ e_i \rightarrow N$ maps $e_i$ to $b_i$, and the fan $\Sigma$ is the collection of all the cones spanned by proper subsets of $\{\rho_1 \dots \rho_{n+1}\}$. This implies in particular that $\bSi$ is exhaustive. 
Now let $b_{i_1} \dots b_{i_l}$ be a set of linearly independent vectors in $B$. By definition  
$\langle \rho_{i_1} \dots \rho_{i_l} \rangle$ is a cone in $\Sigma$. Thus the fact that $b_{i_1} \dots b_{i_l}$ are a $\bZ$-basis of the sublattice generated by $B \cap \langle \rho_{i_1} \dots \rho_{i_l} \rangle  = \{ b_{i_1} \dots b_{i_l} \}$ is tautological: $\bSi$ is therefore unimodular as well. 
\end{proof}

\begin{proposition}
\label{prop:quotients}
Let $\cX_\bSi$ be a toric orbifold with cragged stacky fan 
$\bSi = (N, \Sigma, \beta)$, and let $G$ be a finite group acting on $\cX$ in a way that is compatible with the torus action. Then the stacky fan of the quotient
$[\cX_\bSi/G]$ is cragged as well. 
\end{proposition}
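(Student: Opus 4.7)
The plan is to identify the stacky fan of $[\cX_\bSi/G]$ explicitly and then observe that both conditions in Definition \ref{def:cragged fans} transfer from $\bSi$ essentially for free. I will interpret the hypothesis that $G$ acts compatibly with the torus action as saying that $G$ is realized as a finite subgroup of the open dense torus $T = N \otimes_\bZ \bC^* \subset \cX_\bSi$ acting by translation; in particular the action is free on $T$, so $[\cX_\bSi/G]$ is a toric orbifold with open dense torus $T/G$.

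First I would describe the stacky fan of $[\cX_\bSi/G]$. Let $N'$ denote the cocharacter lattice of $T/G$: dually to the inclusion of character lattices, there is a short exact sequence $0 \to N \to N' \to G^\vee \to 0$, and since $G$ is finite the induced map $N_\bR \to N'_\bR$ is an isomorphism. Using the GIT presentation $\cX_\bSi = [U_\bSi/G_\bSi]$ from Section \ref{sec:toric}, one verifies that $[\cX_\bSi/G]$ is equivalent to $[U_\bSi/\tilde G_\bSi]$ for a natural extension $\tilde G_\bSi$ of $G_\bSi$ by $G$, and that this presentation recovers the stacky fan $\bSi' = (N', \Sigma, \beta')$, where $\Sigma$ is the same fan viewed in $N'_\bR = N_\bR$ and $\beta' = \iota \circ \beta$ with $\iota : N \hookrightarrow N'$ the canonical inclusion. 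In particular the ray generators $b_i' = \beta'(e_i) = b_i$ and the collection $B$ are unchanged.

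Given this description, both craggedness conditions transfer directly. Exhaustiveness is a statement purely about convex hulls of subsets of $\Sigma$ in the real vector space $N_\bR = N'_\bR$, so it is manifestly preserved. For unimodularity, fix linearly independent vectors $b_{i_1}, \dots, b_{i_l} \in B$ and let $T \subset \Sigma$ be the associated subset of cones produced by exhaustiveness. The sublattice $N'_T \subset N'$ generated by $B \cap \bigcup_{\tau \in T}\tau$ coincides, as an abelian subgroup of $N'$, with the sublattice $N_T \subset N$ generated by the same vectors, since every element of $B$ lies in $N$ and the $\bZ$-span of a finite set of vectors is intrinsic to those vectors. Craggedness of $\bSi$ then guarantees that $b_{i_1}, \dots, b_{i_l}$ form a $\bZ$-basis for $N_T = N'_T$, so $\bSi'$ is unimodular.

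The main obstacle is really the identification of the quotient stacky fan rather than the subsequent combinatorial check: one has to argue that a $G$-translation action only enlarges the lattice from $N$ to $N'$ while leaving the fan $\Sigma$ and the vectors $b_i$ untouched. Once this is made precise, the fact that the $b_i$ still lie in $N \subset N'$ makes the verification of both conditions of Definition \ref{def:cragged fans} essentially tautological.
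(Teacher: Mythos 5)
Your proof is correct and follows essentially the same approach as the paper: identify the stacky fan of $[\cX_\bSi/G]$ as the same fan $\Sigma$ with the same vectors $b_i$, viewed inside an overlattice $N'\supset N$ of equal rank, and observe that exhaustiveness is a statement in $N_\bR=N'_\bR$ while the $\bZ$-span of $B\cap\bigcup_{\tau\in T}\tau$ is the same abelian group whether computed in $N$ or $N'$. In fact your description of the duality is cleaner than the paper's, which (presumably by a slip) calls the fixed-character lattice $M'$ an ``overlattice'' of $M$ and the induced map $N\to N'$ ``surjective,'' whereas $M'$ is a finite-index sublattice and $N\hookrightarrow N'$ is injective with finite cokernel, exactly as you say.
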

\begin{proof}
Note that $G$ can be embedded as a finite subgroup of the torus $T = N \otimes_\bZ \bC^*$ in such a way that the action of $G$ on $\cX_\bSi$ 
is induced from the action of $T$ by restriction. 
The subgroup $G$ determines an overlattice $M'$ of $M = Hom(N, \bZ)$: $M'$ is the lattice of characters of $T$ which restrict to the trivial 
representation of $G$.  By dualizing we get 
a surjective map 
$q: N \rightarrow N'$. The stacky fan of $[\cX_\bSi/G]$ is given by  
$\bSi' = (N', \Sigma', q \circ \beta)$, where $\Sigma'$ is the 
set of cones $\{ q(\sigma) | \sigma \in \Sigma \}$. Exhaustiveness and unimodularity of $\bSi'$ follow immediately from the same properties of 
$\bSi$.
\end{proof}

The next Theorem is one of our main results.
\begin{theorem}
\label{thrm:main2}
If $\cX_{\bSi}$ is a smooth toric orbifold such that $\bSi$ is cragged, then
$$
\kappa: \Perf(\cX_{\bSi}) \rightarrow \Sh_c(\bT, \Lambda_{\bSi})
$$
is an equivalence of categories.
\end{theorem}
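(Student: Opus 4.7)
Since $\kappa$ is quasi fully-faithful by Theorem \ref{thm: cohcon}, the task is essential surjectivity. By Remark \ref{rem:equivalence}, it suffices to show that the objects in the image of $p_!$ generate $\Sh_c(\bT, \Lambda_\bSi)$ as a triangulated category. The plan rests on three consequences of the cragged hypothesis: by Proposition \ref{prop:cragged} the tautological $t$-structure restricts to $\Sh_c(\bT, \Lambda_\bSi)$; by Corollary \ref{cor:tensor} the category is closed under ordinary tensor product; and by Corollary \ref{cor: tensor} the functor $\kappa$ intertwines the tensor product on $\Perf(\cX_\bSi)$ with the convolution on $\Sh_c(\bT, \Lambda_\bSi)$, so $\kappa(\Perf(\cX_\bSi))$ is itself closed under convolution.

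The first step is to reduce to the heart. Using the restricted $t$-structure, every object of $\Sh_c(\bT, \Lambda_\bSi)$ is an iterated extension of its cohomology sheaves, so it suffices to show that the heart $\Sh_c(\bT, \Lambda_\bSi)^\heartsuit$ lies in $\langle \kappa(\Perf(\cX_\bSi))\rangle$. To identify generators of this heart, refine the stratification of $\bT$ induced by $\Lambda_\bSi$ to an acyclic stratification $\cS$ with $\Lambda_\bSi\subset\Lambda_\cS$. Theorem \ref{theorem: quiver} then identifies $\Sh_c(\bT, \cS)$ with the derived category of finite modules over the path algebra $A_\cS$, compatibly with $t$-structures. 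Hence $\Sh_c(\bT, \Lambda_\bSi)^\heartsuit$ embeds in a finite-length abelian module category and is therefore generated under extensions by a finite collection of simple sheaves, each essentially a shifted standard sheaf supported on a single stratum of $\cS$.

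The remaining task, which is the combinatorial heart of the argument, is to exhibit each simple sheaf as lying in $\langle \kappa(\Perf(\cX_\bSi))\rangle$. The available building blocks are the sheaves $\Theta(\sigma,\chi) = p_!\Theta_T(\sigma,\chi)$ and, in particular, the constant sheaf $\bC_\bT = \kappa(k(1))$ from Lemma \ref{lem:skyscraper}. Their convolutions remain in $\kappa(\Perf(\cX_\bSi))$. Here the cragged hypothesis is essential: exhaustiveness ensures that unions of cones of $\Sigma$ remain expressible through cones of $\Sigma$, giving a clean combinatorial description of convolutions of the $\Theta(\sigma,\chi)$'s, while unimodularity governs the lattice-level passage from $\tilde\Lambda_\bSi$ to $\Lambda_\bSi$ under $p$. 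The main obstacle is the explicit construction: for each stratum $S$, producing a Postnikov system of convolutions of $\Theta(\sigma,\chi)$'s whose total object is the corresponding simple. I would attempt this by induction on the dimension or depth of the stratum, at each stage using closure of $\Sh_c(\bT, \Lambda_\bSi)$ under tensor product together with the additivity properties from Section \ref{sec:constructible} to verify that intermediate terms in the Postnikov systems stay inside $\Sh_c(\bT, \Lambda_\bSi)$.
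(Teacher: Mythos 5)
Your proposal takes a genuinely different route from the paper, but it has a real gap at exactly the point you flag as the ``combinatorial heart of the argument.'' You reduce (via the restricted $t$-structure) to generating the heart, and then propose to exhibit each simple object of the heart as an iterated convolution of the $\Theta(\sigma,\chi)$'s; but that construction is never carried out --- you only say you ``would attempt this by induction on the dimension or depth of the stratum.'' This is not a minor omission: it is the entire content of essential surjectivity in your scheme, and it is far from clear that it can be done. Moreover, the preliminary reduction has a subtle problem: the heart of $\Sh_c(\bT,\Lambda_\bSi)$ is a proper subcategory of $\mod A_{\cS}$, and the simple $A_{\cS}$-modules (i.e.\ the standard sheaves on individual strata of $\cS$) need not themselves have singular support in $\Lambda_\bSi$, so they need not lie in the heart at all. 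Which objects are the simples of $\Sh_c(\bT,\Lambda_\bSi)^\heartsuit$, and whether every object of the heart has a finite filtration by them, is itself something you would need to establish. Finally, the emphasis on the convolution product and Corollary \ref{cor: tensor} is a red herring for this theorem.

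The paper's argument avoids all of this with a different, self-contained mechanism. Resolve the skyscraper sheaf $k(1)$ by direct sums of line bundles (Proposition \ref{prop:resolution}) and apply $\kappa$; by Lemma \ref{lem:skyscraper} this gives a Postnikov system in $\Sh_c(\bT,\Lambda_\bSi)$ with left convolution $\bC_\bT$. For an arbitrary $\cF \in \Sh_c(\bT,\Lambda_\bSi)$, tensor this system with $\cF$ using the \emph{ordinary} tensor product of constructible sheaves (for which $\bC_\bT$ is the unit): this is legitimate because $\Lambda_\bSi$ cragged $\Rightarrow$ closed under $\otimes$ (Corollary \ref{cor:tensor}), and yields a Postnikov system with left convolution $\cF$. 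Each term $\kappa(\cP^i)\otimes\cF$ is then shown to lie in the image of $p_!$ by choosing equivariant representatives of the line bundles and applying the projection formula $p_!(\tilde\kappa(\tilde\cL))\otimes\cF \cong p_!(\tilde\kappa(\tilde\cL)\otimes p^!\cF)$. This uses craggedness only for closure under $\otimes$; no $t$-structure, no classification of simples, and no stratum-by-stratum construction is needed. You should try to internalize the ``categorified partition of unity'' idea: instead of building each generator of the target from scratch, resolve the tensor unit by objects you already know to be in the image and tensor against an arbitrary $\cF$.
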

\begin{proof}
As explained  
in Remark \ref{rem:equivalence} it is sufficient to prove that the image of 
$$
p_!: \Sh_{c}(M, \tilde \Lambda_{\bSi}) \rightarrow \Sh_c(\bT, \Lambda_\bSi)
$$ 
generates $\Sh_c(\bT, \Lambda_\bSi)$.
Informally, our argument depends on the availability of a kind of categorification of partitions of unity: we explain briefly this analogy 
before proceeding with the proof. 
The role of the unit function 
is played by the constant sheaf $\bC_{\bT} \in \Sh_c(\bT, \Lambda_\bSi)$, which is the unit for the tensor product\footnote{We stress that here we are working with the ordinary tensor product of constructible sheaves, as opposed to the \emph{convolution} product which was discussed in Section \ref{sec:functoriality}.} in $\Sh_c(\bT, \Lambda_\bSi)$. We regard the image under 
$\kappa$ of a line bundle $\cL$ as corresponding to a bump function (or rather, to  the indicator function of an open subset): 
as a justification recall that if $\tilde \cL$ is an equivariant ample line bundle then $\tilde{\kappa}(\cL) = i_{!}\bC_{\Delta_{\cL}^{\circ}}$, where $\bC_{\Delta_{\cL}^{\circ}}$ is the constant sheaf on the interior of the lattice polytope $\Delta_{\cL}$ (see \cite{FLTZ2}). When $\bSi$ is cragged we 
can construct a complex $X^\bullet$ in $\Sh_c(\bT, \Lambda_\bSi)$ with the property that $X^i$ are direct sums of objects of the form $\kappa(\cL)$ and $\bC_T$ is a left convolution of $X^\bullet$: this is the key step in our argument, and we think of this as an  analogue of a partition of unity.

Lemma \ref{lem:skyscraper} 
implies that there is a resolution 
$$
\cP^\bullet = (\cP^m \rightarrow \cP^{m-1} \rightarrow \dots \rightarrow \cP^0) \rightarrow k(1) 
$$ 
of the skyscraper sheaf $k(1)$ such that $P^i$ is a direct sum of line bundles 
for all $i$. We regard 
$\cP^\bullet$ as a complex of objects in $\Perf(\cX_\bSi)$ and $k(1)$ as the left convolution of $\cP^\bullet$. Evaluating 
$\kappa$ on $\cP^\bullet$ we obtain a complex in $\Sh_c(\bT, \Lambda_{\bSi})$, $\kappa(P^\bullet) = (\kappa(\cP^m) \rightarrow \dots \rightarrow \kappa(\cP^0) )$  such that $\kappa(k(1)) \cong \bC_{\bT}$ is its left convolution.  
We can use this to realize all objects of $\Sh_c(\bT, \Lambda_\bSi)$ as iterated cones of objects lying in the image of $p_!$ in the following manner.

Let $\cF \in Sh(\bT, \Lambda_{\bSi})$. We tensor 
$\kappa(P^\bullet)$ with 
$\cF$ and we get a complex
$$
\kappa(\cP^m) \otimes \cF \rightarrow \dots \rightarrow \kappa(\cP^0) 
\otimes \cF,    
$$
such that $\cF = \bC_\bT \otimes \cF$ is its left convolution. 
This implies that $\cF$ lies in the subcategory 
$$
\langle \kappa(\cP^m) \otimes \cF, \dots, \kappa(\cP^0) \otimes \cF \rangle \subset \Sh_c(\bT).
$$ 
Note that $\kappa(\cP^i) \otimes \cF$ is  
an object of $\Sh_c(\bT, \Lambda_\bSi)$ for all $i$: in fact, since $\Lambda_\bSi$ 
is cragged, $\Sh_c(\bT, \Lambda_\bSi)$ is closed under tensor product 
by Corollary \ref{cor:tensor}.
We prove next that $\kappa(\cP^i) \otimes \cF$ 
lies in the image of $p_!$.
Write $\mathcal{P}^i = \bigoplus_{j \in J} L_j$ where $L_j$ are line bundles, and choose equivariant representatives $\tilde L_j \in Perf_{T}(\cX_{\bSi})$.  
There are natural isomorphisms 
$
\kappa(\bigoplus_{j \in J}\mathcal{L}_j) \cong \bigoplus_{j \in J}\kappa(\mathcal{L}_j) \cong \bigoplus_{j \in J} p_!(\tilde{\kappa}(\tilde{\mathcal{L}}_j)).
$
Thus we have that
$$
\mathcal{P}^i \otimes \mathcal{F} \cong \bigoplus_{j \in J}(p_!(\tilde{\kappa}(\tilde{\mathcal{L}}_j)) \otimes \mathcal{F}) \cong \bigoplus_{j \in J} 
p_!(\tilde{\kappa}(\tilde{\mathcal{L}}_j) \otimes p^! \mathcal{F}),
$$
where the last isomorphism is given by the projection formula.\footnote{Note that $p^! \mathcal{F} \in \Sh_c(M, \tilde{\Lambda}_{\bSi})$ is not compactly supported: however the tensor product $\tilde{\kappa}(\tilde{\mathcal{L}}_j) \otimes p^! \mathcal{F}$ is, and therefore lies in $\Sh_{cc}(M, \tilde{\Lambda}_{\bSi})$ as required.}
 This completes the proof.   
\end{proof}

\section{Tilting complexes}
\label{section:tilting}
As an application of Theorem 
\ref{thrm:main2} we prove that if $\bSi$ is a cragged 
stacky fan, then $\Perf(\cX_\bSi)$ has a tilting complex. 
Let $A$ be a finite-dimensional algebra over 
$\bC$.
\footnote{All algebras appearing in this paper are always assumed to be 
associative and unital.} 
Let us recall some notations:  
$\mod A$ is the abelian category of finite dimensional 
$A$-modules, $D^b_{dg}(A)$ is the triangulated dg category of 
finite dimensional
$A$-modules, and $D^b( A)$ the homotopy category 
$D^b(A) = D(D^b_{dg}(A))$. $D^b_{dg}( A)$ has a tautological $t$-structure with heart equal to 
$\mod A$. 

\begin{theorem}
\label{theorem: derived}
Let $A$ be a finite dimensional algebra of finite global dimension. 
Let $\cC$ be a triangulated dg category. Assume that there is a 
quasi fully-faithful embedding of $\cC$ into $D^b_{dg}(A)$ 
that satisfies the following conditions:
\begin{enumerate}
\item The restriction of the tautological $t$-structure of
$D^b_{dg}( A)$ is a well defined $t$-structure on $\cC$.
\item The embedding $\cC \rightarrow D^b_{dg}(A)$ 
admits a left adjoint.
\end{enumerate}
Then there exists a finite-dimensional algebra $B$ 
having global dimension smaller or equal than the global dimension of $A$, and a quasi-equivalence $\cC \cong D^b_{dg}( B)$. 
\end{theorem}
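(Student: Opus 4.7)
The plan is to produce a tilting object $T \in \cC$ with $B := \End_\cC(T)$ finite-dimensional, and then invoke dg tilting theory to conclude the quasi-equivalence $\cC \simeq D^b_{dg}(B)$. Let $i : \cC \hookrightarrow D^b_{dg}(A)$ denote the embedding and $L : D^b_{dg}(A) \to \cC$ its left adjoint, supplied by assumption (2). The natural candidate is $T := L(A)$, where $A$ is viewed as the free rank-one module in $D^b_{dg}(A)$. The central identity, coming from adjunction and the projectivity of $A$ in $\mod A$, is
\[
\dghom_{\cC}(T, X) \;\simeq\; \dghom_A(A, i(X)) \;\simeq\; i(X) \qquad \text{for all } X \in \cC,
\]
so in particular $\Ext^k_\cC(T, X) = H^k(i(X))$ for every $k$.

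By assumption (1) the embedding $i$ is t-exact, and consequently its left adjoint $L$ is right t-exact, giving $T \in \cC^{\leq 0}$. I would argue further that $T \in \cC^{\geq 0}$, and hence $T \in \cC^\heartsuit$, by exploiting the central identity applied to the cohomological truncations of $T$ to rule out nonzero negative cohomology. Once $T \in \cC^\heartsuit$ is established, setting $X = T$ gives $\Ext^k_\cC(T, T) = H^k(i(T)) = 0$ for $k \neq 0$, since $i(T) \in \mod A$, and $B = \End_\cC(T) \simeq i(T)$ is a finite-dimensional $\bC$-algebra. Moreover $T$ classically generates $\cC$: if $\Hom_\cC(T, X[k]) = 0$ for all $k$, then $H^k(i(X)) = 0$ for all $k$, whence $i(X) \simeq 0$ and so $X \simeq 0$ by fully faithfulness of $i$.

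Thus $T$ is a tilting object of $\cC$: it has no self-extensions in nonzero degrees, generates $\cC$, and has finite-dimensional endomorphism algebra $B$. By a dg enhancement of Rickard's tilting theorem (in the spirit of Keller's work on dg categories), the functor $\dghom_\cC(T,-) : \cC \to D^b_{dg}(B)$ is then a quasi-equivalence. For the global dimension bound, fully faithfulness of $i$ combined with the tilting equivalence yields
\[
\Ext_B^k(M, N) \;=\; \Ext_\cC^k(X, Y) \;=\; \Ext_{D^b_{dg}(A)}^k(i(X), i(Y)),
\]
for $X, Y \in \cC^\heartsuit$ corresponding to $M, N \in \mod B$; the right-hand side vanishes whenever $k > \mathrm{gl.dim}(A)$, so $\mathrm{gl.dim}(B) \leq \mathrm{gl.dim}(A)$.

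The main obstacle is the verification that $T \in \cC^\heartsuit$. Right t-exactness of $L$ only gives $T \in \cC^{\leq 0}$; since a right adjoint of $i$ is not assumed to exist, the lower bound $T \in \cC^{\geq 0}$ requires a more delicate argument using the central identity together with truncation triangles in $\cC$. Once that is secured, the remaining steps---the tilting equivalence and the global-dimension bound---are fairly standard applications of dg tilting theory.
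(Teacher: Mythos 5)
Your strategy—take $T := L(A)$, use the adjunction identity $\dghom_{\cC}(T,X) \simeq \dghom_A(A,iX) \simeq iX$ to read off $\Ext^k_\cC(T,X) \cong H^k(iX)$, show $T$ generates $\cC$, and then invoke dg tilting—is very close in spirit to what the paper does, and your ``central identity'' is correct. However, you yourself flag the crucial step you cannot supply, and that is in fact a genuine gap, not a delicate technicality: nothing in the hypotheses forces $L(A) \in \cC^{\geq 0}$. Right $t$-exactness of $L$ gives $L(A) \in \cC^{\leq 0}$, but with only a left adjoint available, $L$ need not be left $t$-exact, and the central identity only constrains $\Hom$'s \emph{out of} $T$, which is not enough to rule out negative cohomology of $iT$ by elementary truncation-triangle manipulations. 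Without $T \in \cC^\heartsuit$, the vanishing $\Ext^{\neq 0}_\cC(T,T)=H^{\neq 0}(iT)=0$ fails, and $T$ would at best be a silting, not a tilting, object.

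The paper sidesteps this exact difficulty by never claiming $L(A)$ lies in the heart. Instead it works with the truncation $T_0 := H^0(L(A))$, which lands in $\cC^0 = \cC^\heartsuit$ by construction. The key observation (Proposition \ref{truncation} plus a short diagram chase) is that $L^0 := H^0 \circ L \circ j$, where $j: \mod A \hookrightarrow D^b_{dg}(A)$ is the heart inclusion, is a left adjoint of $i_0: \cC^0 \to \mod A$ at the purely abelian level. Since left adjoints of exact functors preserve projective generators, $T_0 = L^0(A)$ is a projective generator of $\cC^0$, so $\cC^0 \cong \mod B$ with $B = \End(T_0)$. The conclusion then follows from Lemma \ref{extension} (a Rickard-type argument lifting a degree-wise fully faithful exact functor $\mod B \to \mod A$ to a quasi fully faithful $D^b_{dg}(B) \to D^b_{dg}(A)$) together with Lemma \ref{homology} (an object of $D^b_{dg}(A)$ lies in $\cC$ iff all its cohomologies lie in $\cC^0$), which identifies the image of $D^b_{dg}(B)$ with $\cC$. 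To repair your proof along your own lines you should replace $T = L(A)$ by $T_0 = H^0(L(A))$; but note your central identity no longer applies verbatim to $T_0$, and you would then need essentially the paper's abelian-level adjunction argument to show $T_0$ is a projective generator, so the two approaches converge.
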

Let $A$ and $\cC$ be as in the statement of the theorem. 
We denote $\cC^0 = \cC \cap \mod A$: $\cC^0$ is 
the heart of the induced $t$-structure on $\cC$. The homotopy category $D(\cC)$ is a thick subcategory of $D^b(A)$, and the inclusion 
$\cC \rightarrow D^b_{dg}(A)$ admits a left adjoint which we denote $L$. 

\begin{lemma}
\label{homology}
An object $X \in D^b_{dg}(A)$ belongs to $\cC$ if and only if $H^i(X) \in \cC^0$ for all $i \in \bZ$. 
\end{lemma}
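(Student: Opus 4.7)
My plan is to prove the two implications separately, with the forward direction essentially immediate from the hypothesis and the reverse direction proceeding by induction on cohomological amplitude.

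For the ``only if'' direction, suppose $X \in \cC$. By hypothesis $(1)$ of Theorem \ref{theorem: derived}, the tautological $t$-structure on $D^b_{dg}(A)$ restricts to a $t$-structure on $\cC$, which means in particular that the truncation functors $\tau^{\leq n}$ and $\tau^{\geq n}$ preserve $\cC$. Therefore for each $i \in \bZ$ the object $H^i(X) \cong \tau^{\leq 0}\tau^{\geq 0}(X[i])$ lies in $\cC$. Since $H^i(X)$ is a module (concentrated in degree $0$), it lies in $\cC \cap \mod A = \cC^0$.

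For the ``if'' direction, suppose $X \in D^b_{dg}(A)$ satisfies $H^i(X) \in \cC^0$ for all $i$. Since $X$ is bounded, only finitely many $H^i(X)$ are nonzero, and I argue by induction on the number of nonzero cohomology groups. In the base case $X$ has at most one nonzero cohomology group $H^i(X)$, and then $X \cong H^i(X)[-i]$; since $\cC^0 \subset \cC$ and $\cC$ is triangulated (hence closed under shifts), we conclude $X \in \cC$. For the inductive step, let $b$ be the largest integer with $H^b(X) \neq 0$ and consider the standard distinguished triangle
$$
\tau^{\leq b-1} X \longrightarrow X \longrightarrow \tau^{\geq b}X \longrightarrow \tau^{\leq b-1}X[1].
$$
The right-hand term $\tau^{\geq b}X \cong H^b(X)[-b]$ lies in $\cC$ by the base case. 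The left-hand term $\tau^{\leq b-1}X$ has cohomology $H^i(\tau^{\leq b-1}X) = H^i(X) \in \cC^0$ for $i \leq b-1$ and vanishing cohomology above, so it has strictly fewer nonzero cohomology groups and lies in $\cC$ by the inductive hypothesis. Since $\cC$ is a triangulated dg subcategory of $D^b_{dg}(A)$, it is closed under cofibers, and hence $X \in \cC$.

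Neither direction poses a real obstacle: both are essentially formal consequences of the restriction of the $t$-structure to $\cC$ and of the triangulated structure. The only mild subtlety is to make sure one phrases the induction using boundedness of $X$, so that it terminates in finitely many steps; hypothesis $(2)$ of Theorem \ref{theorem: derived} (existence of a left adjoint) plays no role in this lemma and will presumably be used later in the proof of the theorem itself.
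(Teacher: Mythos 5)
Your proof is correct and follows essentially the same route as the paper's: the forward direction is immediate from the restricted $t$-structure, and the reverse direction is an induction on the number of nonzero cohomology groups using the truncation triangle at the top degree. You are a bit more careful than the paper with the shift $H^b(X)[-b]$ and with spelling out why truncations preserve $\cC$, but the argument is the same.
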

\begin{proof}
Clearly if $X \in \cC$ then $H^i(X) \in \cC^0$. 
We prove the other implication by induction on the number  
of indexes $i$ for which $H^i(X)$ is different from zero.
Suppose that $H^i(X)$ vanishes for all but one $i$. Then $X \cong H^i(X)$ and as the objects of $\cC^0 $ are contained in $\cC$ we have that $X \in \cC$. 
Suppose now that the set of indices $i$ such that $H^i(X) \not =0$ 
has cardinality $n$. Take $j$ to be the maximal index such that $H^j(X) \not =0$. In $D^b_{dg}(A)$ we have a cofiber sequence  
$Y \to X \to H^j(X)$, where $Y$ has non-vanishing cohomology 
in a strictly smaller set of degrees than $X$. By the inductive hypothesis 
$Y \in \cC$. As $\cC$ is triangulated, $X$ belongs to $\cC$ as well. 
\end{proof}

Let $\cA$ and $\cB$ be two abelian categories. We say that 
an exact functor $F : \cB \to \cA$ is \emph{degree-wise fully faithful} if 
$\Ext^i_{\cB}(A, B) \cong \Ext^i_{\cA}(F A, F B)$ for all $i \in \bN$. 

\begin{lemma}
\label{extension} 
Let $A$ and $B$ be finite-dimensional algebras. Suppose that $A$ has finite global dimension. 
If $F: \mod B \to \mod A$ is a degree-wise fully faithful functor, then 
its extension to the dg enhancements $F: D^b_{dg}(B) \to D^b_{dg}( A)$ is also quasi fully faithful. Further the global dimension of $B$ is smaller or equal than the global dimension of $A$. 
\end{lemma}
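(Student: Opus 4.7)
Since the definition of \emph{degree-wise fully faithful} requires $F : \mod B \to \mod A$ to be exact, applying $F$ termwise to bounded complexes of modules yields a well-defined dg functor $F : D^b_{dg}(B) \to D^b_{dg}(A)$; this extension preserves quasi-isomorphisms and moreover commutes with the truncation functors $\tau^{\leq n}$ and $\tau^{\geq n}$, since these are built out of kernels, cokernels and inclusions of subcomplexes. I would treat the two assertions in turn.

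For the global dimension inequality, recall that $\mathrm{gl.dim}(B)$ equals the supremum of those $i \in \bN$ for which $\Ext^i_B(M,N) \neq 0$ for some $M, N \in \mod B$. By hypothesis there is an isomorphism $\Ext^i_B(M,N) \cong \Ext^i_A(FM, FN)$ for every such pair and every $i \geq 0$. Taking $i > \mathrm{gl.dim}(A)$ forces the right-hand side to vanish, hence so does the left-hand side, and therefore $\mathrm{gl.dim}(B) \leq \mathrm{gl.dim}(A) < \infty$.

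For quasi fully faithfulness it suffices to show that the comparison map
$$
\Hom_{D^b(B)}(X, Y[i]) \lr \Hom_{D^b(A)}(FX, FY[i])
$$
is an isomorphism for all $X, Y \in D^b_{dg}(B)$ and all $i \in \bZ$. The base case is when $X = M[a]$ and $Y = N[b]$ are shifts of modules: both sides then compute $\Ext^{b-a+i}_{?}(M,N)$ (vanishing if this index is negative) and are identified by the hypothesis. For the inductive step I would induct on the total cohomological amplitude of $X$ and $Y$. If $X$ has cohomology in more than one degree, the truncation cofiber sequence $\tau^{<n}X \to X \to \tau^{\geq n}X$ in $D^b_{dg}(B)$ maps under $F$ to the analogous truncation triangle for $FX$ (by compatibility of $F$ with truncation). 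The two resulting long exact sequences of $\Hom$'s, together with the inductive hypothesis applied to the outer terms, yield isomorphisms on all but the middle term; the five lemma then handles $X$. An identical argument reduces the $Y$-side, completing the induction.

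The argument is a standard dévissage, so no serious obstacle arises; the only point requiring a little care is verifying that $F$ intertwines truncation triangles, which is immediate from exactness of $F$ at the abelian level.
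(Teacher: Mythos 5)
Your proof is correct, and it takes a genuinely different route from the one in the paper. The global-dimension bound is argued identically in both (degree-wise full faithfulness forces $\Ext^i_B(M,N)$ to vanish once $i$ exceeds $\mathrm{gl.dim}(A)$). For quasi full faithfulness, however, the paper invokes tilting machinery: since $F(B)$ has no higher self-extensions, Rickard's Morita theory for derived categories produces a fully faithful embedding $D^b(B)\hookrightarrow D^b(A)$ at the triangulated level, and this is then promoted to a quasi fully faithful dg functor by appealing to the Lunts--Orlov uniqueness of dg enhancements. You instead run a direct d\'evissage: $F$ is exact, hence its termwise extension commutes with the truncation functors, and an induction on cohomological amplitude together with the five lemma on the long exact sequences coming from truncation triangles reduces the general case to the base case of shifted modules, where the statement is precisely the degree-wise full faithfulness hypothesis. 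Your approach is more elementary and self-contained (no Rickard, no Lunts--Orlov), and it has the additional merit of directly identifying the quasi fully faithful dg functor as the termwise extension of $F$ --- a point the paper's argument leaves implicit, since the functor produced by the Rickard/Lunts--Orlov route is not manifestly the termwise extension, and that identification is what is actually used downstream (to see that the image has cohomology in $\cC^0$). The paper's route, on the other hand, makes the connection to tilting theory transparent. Both arguments are valid.
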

\begin{proof}
Since $F$ is degree-wise fully faithful, the $A$-module $F(B)$ satisfies $\Ext^i_A(F( B), F( B))=0$ for all $i \not =0$. It follows from  \cite{Rickard} Theorem 2.12 and Propositions 3.1 and 3.2 that 
$F$ induces a fully faithful embedding into $D^b(A)$ of the 
triangulated category of perfect $B$-modules,  
$\Perf(\mod B) $. 
Note that the maximum range of non vanishing 
$Ext$-groups between $B$-modules is bounded by that of $A$-modules. That is, the global dimension of $B$ is bounded by the global dimension of $A$, and $\Perf(\mod B) \cong D^b(B)$. Thus, there is a fully faithful embedding $D^b(B) \rightarrow D^b(A)$. 
By the uniqueness of cg enhancements (see \cite{LO}[Proposition 2.6]) for $D^b(B)$ and $D^b(A)$,  the embedding lifts to a quasi fully faithful embedding $F:D^b_{dg}( B) \to D^b_{dg} (A)$.  This completes the proof.
%
\end{proof}
Let $(D_{\le 0 }, D_{>0} )$ be the aisles of the tautological $t$-structure on $D^b(A)$, and $\tau_{\le 0} : D^b(A) \to D_{\le 0}$ and $\tau_{>0}: D^b (A) \to D_{> 0}$ be the truncation functors. 

\begin{proposition}
\label{truncation}
Let $D(C)$ and $D^b(A)$ be as in the statement of Theorem \ref{theorem: derived}. 
Suppose now that $M \cong \tau_{\le 0} M$ for some $M \in \cC$, that is $M$ has zero homology only in positive degree. Then the left adjoint $L(M)$ has zero homology in positive degree, that is $L(M) \cong \tau_{\le 0} L(M)$. 
\end{proposition}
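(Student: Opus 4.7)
The plan is to reduce the statement to a formal consequence of $t$-structure orthogonality combined with the adjunction $L \dashv \iota$, where $\iota: \cC \hookrightarrow D^b_{dg}(A)$ denotes the inclusion. Recall the standard characterization of a $t$-structure: an object $X$ lies in the non-positive part $\cC_{\le 0}$ if and only if $\Hom_\cC(X, N) = 0$ for every $N$ in the strictly positive part $\cC_{>0}$. So it suffices to show $\Hom_\cC(L(M), N) = 0$ for all such $N$.

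The first step is to invoke the adjunction to rewrite
$$
\hom_{\cC}(L(M), N) \cong \hom_{D^b_{dg}(A)}(M, \iota N).
$$
The second step is to use hypothesis (1) of Theorem \ref{theorem: derived}, which asserts that the tautological $t$-structure on $D^b_{dg}(A)$ restricts to the $t$-structure on $\cC$: this guarantees that if $N \in \cC_{>0}$ then $\iota N \in D_{>0}$ in the ambient category. Since $M \in D_{\le 0}$ by assumption, the orthogonality axiom for the tautological $t$-structure on $D^b_{dg}(A)$ immediately yields the vanishing $\hom_{D^b_{dg}(A)}(M, \iota N) = 0$. Combining the two steps, $L(M)$ is right-orthogonal to $\cC_{>0}$, hence $L(M) \cong \tau_{\le 0}L(M)$ as claimed.

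No serious obstacle arises: the argument is essentially formal, and the only point to verify carefully is the compatibility of the two $t$-structures, which is precisely hypothesis (1). Note that the statement is non-trivial only when $M$ is regarded as an object of $D^b_{dg}(A)$ (if $M$ already lies in $\iota(\cC)$ then $L(M) \cong M$ by the fully faithfulness of $\iota$, and the conclusion is immediate from the hypothesis on $M$).
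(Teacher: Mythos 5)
Your proof is correct and takes essentially the same approach as the paper: both rest on the adjunction $L \dashv \iota$, hypothesis (1) ensuring $\iota$ maps $\cC_{>0}$ into $D_{>0}$, and the ambient orthogonality $\Hom_{D^b(A)}(D_{\le 0},D_{>0})=0$. The paper runs the same adjunction argument specialized to the single test object $N=\tau_{>0}L(M)$ and concludes $\tau_{>0}L(M)=0$ from the vanishing of its identity map, whereas you test against all of $\cC_{>0}$ and invoke the characterization of $\cC_{\le 0}$ as the left-orthogonal of $\cC_{>0}$ --- a purely cosmetic difference. (Minor points: your phrase ``right-orthogonal'' should be ``left-orthogonal'' since the condition is $\Hom(L(M),-)=0$ on $\cC_{>0}$; and you are right that the hypothesis should read $M\in D^b_{dg}(A)$ rather than $M\in\cC$, which is how the proposition is actually applied later in the paper.)
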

\begin{proof}
If $M$ is an object in $D^b( A)$ there is a unique object $M^{\perp} \in D(\cC)^{\perp}$ 
such that $M^{\perp} \to M \to L(M) \to M^{\perp}[1]$ is a distinguished triangle. 
We have the following commutative diagram whose middle row and columns are distinguished triangles:
\[ 
\xymatrix{ \tau_{\le 0} M^{\perp} \ar[r] \ar[d]& \tau_{\le 0} M \ar[r] \ar[d] &  \tau_{\le 0} L(M) \ar[d] \\
\ M^{\perp} \ar[r] \ar[d] & M \ar[r] \ar[d] &  L(M) \ar[d] \\
\tau_{>0} M^{\perp} \ar[r]  & \tau_{>0} M \ar[r] &  \tau_{> 0} L(M).}
\]
Suppose now that $M \cong \tau_{\le 0} M$. Then $\tau_{>0} M$ vanishes and $\Hom_{D^b(A)}(M, X)$=$0$ for all $X$ in $D_{>0}$. As $ \tau_{>0} L(M)$ lies in $\cC$, it follows that 
$$
\Hom_{D^b(A)}(M, \tau_{>0} L(M))\cong \Hom_{D^b(A)}(L(M), \tau_{>0} L(M)) = 0.
$$  
Thus $ \tau_{>0} L(M) = 0$, and we have that $L(M) \cong \tau_{\le 0} L(M)$. 
\end{proof}
We can now proceed with the proof the main theorem of this section.
\begin{proof}[The proof of Theorem \ref{theorem: derived}]
Denote by $j$ the embedding of $\mod A$ into $D^b( A)$ 
as the heart of the tautological $t$-structure.
Consider the following commutative diagram: 

\[ \xymatrix{D( \cC) \ar[r]_i  \ar[d]_{H^0} & D^b( A)  \ar@/_/[l]_L  \ar[d]_{H^0} \\ 
\cC^0 \ar[r]_{i_0}  & \mod A  \ar@/_/[u]_j \text{    .}}
\] 
We denote $L^0: \mod A \to \cC^0$ the composition $L ^0 = H^0 L j$. 
We prove first that $L^0$ is a left adjoint of $i_0$. 
Let $N \in \mod A$ and $ M \in \cC^0$. We have that $H^0L(N)=\tau_{> -1} \tau_{\le 0} L(N)$. Since $N$ is concentrated in degree zero by Remark \ref{truncation}, $\tau_{\le 0} L(N)\cong L(N)$. 
Thus $$\Hom_{\cC^0} (L^0 N, M ) \cong \Hom_{D^b_{dg}(A)}( \tau_{>-1} L (N),   M).$$ 
We apply $\Hom(-, M)$ to the distinguished triangle 
$$\tau_{\le -1} L(N) \to L(N) \to \tau_{>-1} L(N) \to \tau_{\le -1} L(N)[1].$$
As $M$ is concentrated in degree zero, we deduce that
$$\Hom_{D^b(A)}( \tau_{>-1} L N,   M) \cong \Hom( L N, M) \cong \Hom(N, M),$$ that is, $L^0$ is a left adjoint of $i_0$. 

Left adjoints of exact functors preserve projective generators. 
Thus $L^0(A)$ is a projective generator of $\cC^0$, and 
$\cC^0$ is equivalent to $\mod B$ where $B$ is equal to $\End_{\mod A}(L^0(A))$. The equivalence $\mod B \longrightarrow \cC^0 $ is given by the functor 
$-\otimes_B L^0(A)$. Consider the composition $F:=i_0( -\otimes_B L^0(A))$, $F$ is a degree-wise fully faithful functor from $\mod B$ to $\mod A$. By Lemma \ref{extension}, $F$ extends to a fully faithful embedding $F: D^b_{dg}(B) \to D^b_{dg}( A)$ whose image consists of complexes with 
cohomology in $\cC^0$. By Lemma \ref{homology},  
the image of $F$ lies in $\cC$. Conversely $D(\cC)$ is generated by $\cC^0$ as a triangulated category. This shows that there is an equivalence 
$\cC \cong D^b_{dg}(B)$. 
\end{proof} 

\begin{theorem}
Let $\cX_\bSi$ be a smooth toric orbifold such that $\bSi$ is cragged. 
Then there is a finite-dimensional algebra $B$ of finite 
global dimension such that $\Perf(\cX_{\bSi})$ is quasi-equivalent to 
$D^b_{dg}( B)$. In particular $\Perf(\cX_{\bSi})$ contains a tilting complex.

\end{theorem}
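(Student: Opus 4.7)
The plan is to combine Theorem~\ref{thrm:main2} with Theorem~\ref{theorem: derived} using the quiver description of $\Sh_c(\bT, \cS)$ provided by Theorem~\ref{theorem: quiver}. By Theorem~\ref{thrm:main2} we have a quasi-equivalence $\Perf(\cX_\bSi) \cong \Sh_c(\bT, \Lambda_\bSi)$, so it suffices to exhibit a finite-dimensional algebra $B$ of finite global dimension and an equivalence $\Sh_c(\bT, \Lambda_\bSi) \cong D^b_{dg}(B)$. The tilting conclusion then follows because the free module $B$ corresponds under this equivalence to a tilting object in $\Perf(\cX_\bSi)$.

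First, I would construct an acyclic stratification $\cS$ of $\bT$ adapted to $\Lambda_\bSi$. Since $\bSi$ is cragged, $\Lambda_\bSi$ is a finite union of conormals to (closures of) locally closed pieces cut out by the hyperplanes $\sigma^\perp_\chi$; after possibly refining, one obtains a Whitney stratification $\cW$ with $\Lambda_\bSi \subset \Lambda_\cW$, and then refines $\cW$ to an acyclic stratification $\cS$ in the sense of \cite{B}. The inclusion $\Lambda_\bSi \subset \Lambda_\cS$ yields a quasi fully-faithful embedding $\Sh_c(\bT, \Lambda_\bSi) \hookrightarrow \Sh_c(\bT, \cS)$, and composing with the equivalence of Theorem~\ref{theorem: quiver} gives a quasi fully-faithful embedding $\Sh_c(\bT, \Lambda_\bSi) \hookrightarrow D^b_{dg}(\mod A_\cS)$ where $A_\cS$ has finite global dimension (the quiver is directed).

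Next I would verify the two hypotheses of Theorem~\ref{theorem: derived} for $\cC = \Sh_c(\bT, \Lambda_\bSi)$ and $A = A_\cS$. The first hypothesis, that the tautological $t$-structure restricts to $\cC$, is immediate from Proposition~\ref{prop:cragged} (which applies because $\Lambda_\bSi$ is a cragged Lagrangian) together with the fact, recorded in Theorem~\ref{theorem: quiver}, that $\Phi$ preserves tautological $t$-structures. The second hypothesis, the existence of a left adjoint to the inclusion, is the main technical point; my approach would be to construct it concretely via microlocal projection, exploiting the fact (Corollary~\ref{cor:tensor}) that $\Sh_c(\bT, \Lambda_\bSi)$ is closed under tensor product, so that tensoring with a suitably chosen sheaf supported on the cragged pieces produces the desired reflection. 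Alternatively, one can argue abstractly: the heart $\cC^0 = \Sh_c(\bT, \Lambda_\bSi) \cap \mod A_\cS$ is a Serre subcategory of $\mod A_\cS$ and one can check reflectivity at the level of the abelian hearts and then derive.

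Granting these two verifications, Theorem~\ref{theorem: derived} produces a finite-dimensional algebra $B$ of finite global dimension with $\Sh_c(\bT, \Lambda_\bSi) \cong D^b_{dg}(B)$, and chaining with $\kappa$ gives $\Perf(\cX_\bSi) \cong D^b_{dg}(B)$. The image of $B$ under this equivalence is then the desired tilting complex. The hardest step will be the construction of the left adjoint; once that is in place, everything else is assembly of previously established results.
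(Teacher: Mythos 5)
Your proposal follows the paper's overall architecture closely: restrict the tautological $t$-structure via Proposition~\ref{prop:cragged}, embed $\Sh_c(\bT, \Lambda_\bSi)$ into $D^b_{dg}(A_\cS)$ using an acyclic refinement and Theorem~\ref{theorem: quiver}, and then feed the result into Theorem~\ref{theorem: derived}. Where it diverges, and where there is a genuine gap, is the verification of hypothesis (2) of Theorem~\ref{theorem: derived} --- the existence of a left adjoint to the embedding $J\colon \Sh_c(\bT, \Lambda_\bSi) \hookrightarrow D^b_{dg}(A_\cS)$. You identify this as the ``hardest step'' but do not actually supply a proof; you only gesture at two possible strategies.

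Both of your suggested strategies are problematic. The ``microlocal projection by tensoring'' idea is not precise enough to evaluate: closure under tensor product (Corollary~\ref{cor:tensor}) is an \emph{internal} property of $\Sh_c(\bT, \Lambda_\bSi)$, and tensoring a sheaf $\cF$ with $SS(\cF) \not\subset \Lambda_\bSi$ by something in $\Sh_c(\bT, \Lambda_\bSi)$ need not decrease singular support to lie in $\Lambda_\bSi$, so it is unclear what the ``reflection'' would be. The ``Serre subcategory'' alternative is also unjustified: the heart $\cC^0 = \Sh_c(\bT, \Lambda_\bSi) \cap \mod A_\cS$ is an abelian full subcategory because the $t$-structure restricts, but there is no reason for it to be closed under subobjects and quotients taken inside $\mod A_\cS$ --- a subsheaf of a sheaf with singular support in $\Lambda_\bSi$ need not itself have singular support in $\Lambda_\bSi$. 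The paper sidesteps all of this with a short abstract argument you are missing: since $\cX_\bSi$ is a smooth and proper DM stack, $\Perf(\cX_\bSi)$ is a \emph{saturated} dg category in the sense of To\"en, and saturated dg categories have the property that every quasi fully-faithful functor out of them admits both a left and a right adjoint. By Theorem~\ref{thrm:main2} this property transfers to the quasi-equivalent category $\Sh_c(\bT, \Lambda_\bSi)$, giving the required left adjoint at no extra cost. You should replace your two sketches with this saturatedness argument.
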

\begin{proof}
Since $\Lambda_\bSi$ is cragged, the tautological $t$-structure of $\Sh_c(\bT)$ restricts to $\Sh_c(\bT, \Lambda_{\bSi})$.  
Let $\cS$ be an acyclic Whitney stratification of $\bT$ such that all complexes in  
$\Sh_c(\bT, \Lambda_\bSi)$ have cohomology that is constructible with respect to $\cS$:
this gives an embedding $\Sh_c(\bT, \Lambda_\bSi) \subset \Sh_c(\bT, \cS)$. By Theorem \ref{theorem: quiver} there is a quasi-equivalence $\Sh_c(\bT, \cS) \cong D^b_{dg}(A_\cS)$ that preserves the $t$-structures. 
Thus we obtain a quasi fully-faithful functor 
$$
J: \Sh_c(\bT, \Lambda_\bSi) \rightarrow D^b_{dg}(A_\cS),
$$
that is compatible with the $t$-structures. 
Also, $J$ has a left adjoint. Indeed, since $\cX_\bSi$ is a smooth and  
proper DM stack, $\Perf(\cX_\bSi)$ is a saturated dg category in the sense of \cite{To}. In particular, all quasi fully-faithful functors with source $\Perf(\cX_\bSi)$ admit both a left and a right adjoint, and the same holds for the quasi-equivalent category $\Sh_c(\bT, \Lambda_\bSi)$. This completes the proof, as we can apply 
Theorem \ref{theorem: derived} and obtain a chain of quasi-equivalences 
$$
\Perf(\cX_\bSi) \cong \Sh_c(\bT, \Lambda_\bSi) \cong D^b_{dg}(B).
$$
\end{proof}

\end{document}